\title{Local density of diffeomorphisms \\ with large centralizers\\
\ \\
Densit\'e locale des diff\'eomorphismes \\ ayant un gros centralisateur}
\author{C. Bonatti, S. Crovisier, G.M. Vago and A. Wilkinson}
   \def\DD{{\mathbb D}}
 \def\NN{{\mathbb N}}  
\def\QQ{{\mathbb Q}} \def\RR{{\mathbb R}}  \def\TT{{\mathbb T}}
 \def\ZZ{{\mathbb Z}}
\def\Si{\Sigma}
\def\La{\Lambda}
\def\De{\Delta}
\def\Om{\Omega}
\def\si{\sigma}
  \def\cI{{\cal I}} \def\cO{{\cal O}} \def\cU{{\cal U}}
\def\cD{{\cal D}}    \def\cV{{\cal V}}
  \def\cL{{\cal L}}
\newtheorem*{mainthm}{Main theorem}
\newtheorem{ques}{Question}
\newtheorem{theo}{Theorem}
\newtheorem{prop}{Proposition}[section]
\newtheorem{lemma}[prop]{Lemma}
\newtheorem{lemm}[prop]{Lemma}
\newtheorem{coro}[prop]{Corollary}
\newtheorem{defi}[prop]{Definition}
\newtheorem{rema}[prop]{Remark}
\newtheorem{conj}{Conjecture}
\newtheorem*{clai}{Claim}
\def\Diff{\hbox{Diff} }
\def\Symp{\hbox{Symp} }
\def\title{\em}
\def\bar{\overline}
\def\T{\mathcal{T}}
\def\transverse{\,\raise2pt\hbox to1em{\hfil$\top$\hfil}\hskip -1em \hbox to1em{\hfil$\cap$\hfil}\,}
\newcommand\R{\mbox{\bf R}}
\newlength{\figboxwidth} \setlength{\figboxwidth}{5.8in}
\begin{document}

\maketitle
\begin{abstract}
Given any compact manifold $M$,  we construct a non-empty open
subset $\cO$ of the space $\Diff^1(M)$ of $C^1$-diffeomorphisms and
a dense subset $\cD\subset \cO$ such that the centralizer of every
diffeomorphism in $\cD$ is uncountable, hence non-trivial. \\
\ \\
{\bf R\'esum\'e :} Pour toute vari\'et\'e $M$ compacte, de dimension
quelconque, nous construisons une partie $\cO \subset \Diff^1(M)$ non
vide, ouverte dans l'espace $\Diff^1(M)$ des $C^1$-diff\'eomorphismes
de $M$, et un sous-ensemble $\cD \subset \cO$ dense en $\cO$,
constitu\'e de diff\'eomorphismes dont le centralisateur est non
d\'enombrable, donc non trivial. \\
\ \\
{\bf Key words:} Trivial centralizer, trivial symmetries,
Mather invariant. \\
\ \\
{\bf  Mots cl\'e :} centralisateur trivial, sym\'etries triviales,
invariant de Mather.\\
\ \\
{\bf MSC 2000:} 37C85 - 37C80 - 37D45 - 37D15 - 37E30.
\end{abstract}

\section*{Introduction}

The {\em centralizer} of a $C^r$ diffeomorphism $f\in \Diff^r(M)$ is the group of diffeomorphisms commuting with $f$:
 $$C(f):=\{g\in \Diff^r(M): fg=gf\}.$$

The centralizer $C(f)$ always contains the group $<f>$ of all the
powers of $f$. For this reason,  we say that $f$ has a {\em trivial
centralizer} if $C(f) = <f>$.  If $f$ is the time one map of a $C^r$
vector field $X$, then $C(f)$ contains the flow of $X$
and hence contains a subgroup diffeomorphic to $\RR$ (or $S^1=\RR/\ZZ$
if $f$ is periodic).

The elements of $C(f)$ are transformations of $M$ which preserve the dynamics of $f$: in that sense they are the symmetries of $f$.
How large is, in general, this symmetry group?

\begin{itemize}
\item
On one hand, the structure on $M$ given by a diffeomorphism is very flexible, so that one might expect that any symmetry could be broken
by a small perturbation of the diffeomorphism.
\item On the other hand, the symmetries are sought in the very large group
$\Diff^r(M)$, which makes the problem harder. For example, one
can easily show that the group $C^0(f)$ of  homeomorphisms
commuting with a Morse-Smale diffeomorphism $f$ is always
uncountable.
\end{itemize}
Nevertheless it is natural to guess that general diffeomorphisms
have no non-trivial smooth symmetries. Making this intuition
explicit, Smale asked the following:

\begin{ques}[\cite{Sm1,Sm2}]\label{c=main} Let
$\T^r(M)\subset \Diff^{~r}(M), r\geq 1$ denote the set of
$C^r$ diffeomorphisms of a compact manifold $M$ with trivial centralizer.

\begin{enumerate}
\item Is $\T^r(M)$ dense in $\Diff^{~r}(M)$?
\item Is $\T^r(M)$ residual in $\Diff^{~r}(M)$? That is, does it contain the intersection of countably many dense open subsets?
\item Is $\T^r(M)$ a dense open subset of $\Diff^{~r}(M)$?
\end{enumerate}
\end{ques}

We think it's natural to reformulate the third part of Smale's
question as:
\begin{ques}\label{q.opendense} Does $\T^r(M)$ contain a dense and open subset of $\Diff^{~r}(M)$?
\end{ques}

This question has many parameters, the most obvious being the
regularity $r$ of the diffeomorphisms and the dimension $\dim(M)$ of
the manifold. The question has been answered in varying degrees of generality
for specific parameters. For instance, Kopell \cite{Ko} proved that
$\Diff^r(S^1)$, $r\geq 2$, contains a dense and open subset of
diffeomorphisms with trivial centralizers. Many authors subsequently gave
partial answers in higher dimension (see \cite{BCW1} for an attempt to list these
partial results).

\vskip 5mm

The present paper and \cite{BCW2} together give a complete answer to Smale's problem for $r=1$. More precisely:
\begin{itemize}
\item \cite{BCW2} proves that $C^1$-generic diffeomorphisms have a trivial centralizer
\footnote{\cite{BCW1} is an announcement which gives the structure of the detailed proof
written in \cite{BCW2}.}, giving a positive answer to the first two
parts of Smale question. \cite{BCW3} shows that $C^1$-generic
conservative (volume preserving or symplectic) diffeomorphisms have
a trivial centralizer.
\item
In this paper, we answer in the negative  (for $r=1$) the third part of Smale's question (and to Question~\ref{q.opendense}) on any compact manifold.
\end{itemize}
\begin{mainthm}\label{t.main} Given any compact manifold $M$, there is a non-empty open subset $\cO\subset \Diff^1(M)$ and a dense subset
$\cD\subset \cO$ such that every $f\in \cD$ is $C^\infty$ and its $C^\infty$-centralizer $C^\infty(f)$ is uncountable (hence not trivial).
\end{mainthm}
We will see below (see Theorem~\ref{t.symp}) that this statement also holds for symplectic diffeomorphisms on a symplectic manifold.

The uniform presentation of this result (\emph{Given any compact
manifold,\dots}) hides very different situations, arguments and
results according to the dimension: namely, whether $\dim(M)<3$ or $\dim(M)\geq 3$. We
discuss this breakdown of the results below.

Our paper also deals with the question of how large is the class of
diffeomorphisms that can be embedded in a flow. This is a natural
question, since the studies of the dynamical systems defined either
by diffeomorphisms or by vector fields are in fact closely related.
In the paper \cite{Pa} titled \emph{Vector fields generate few
diffeomorphisms}, Palis proved that $C^1$-generic diffeomorphisms
cannot be embedded in a flow. Our results somehow counterbalance
Palis' statement: diffeomorphisms that are the time one map of a
flow are $C^1$-locally dense in dimension $1$ and $2$.

\begin{theo}\label{t=main2}
There is a dense subset $\cD\subset\Diff^{~1}(S^1)$ such that every $f\in\cD$
commutes with  the flow of  a $C^\infty$ Morse-Smale vector field $X$.
More precisely, $f$ is Morse-Smale and $f^q$ is the time one map of the flow of $X$,
where $q=2$, if $f$ is orientation reversing, and $q$ is the period of the periodic orbits of $f$ otherwise.
Furthermore, $C(f)$ is isomorphic to $\RR\times (\ZZ/q\ZZ)$.
\end{theo}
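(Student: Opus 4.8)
The plan is to start from a Morse–Smale diffeomorphism of $S^1$ and realize it, up to a power, as the time-one map of a smooth Morse–Smale vector field, then show such diffeomorphisms are $C^1$-dense. First I would recall the structure of a Morse–Smale $f\in\Diff^1(S^1)$: its nonwandering set is a finite union of hyperbolic periodic orbits, all of the same period $q$ (with $q=2$ forced in the orientation-reversing case, since $f^2$ preserves orientation and fixes every point of a periodic orbit), and between consecutive periodic points the dynamics of $f^q$ is a monotone map with no fixed points, i.e.\ it "slides" points from one endpoint to the other. The first step is a density statement: given an arbitrary $g\in\Diff^1(S^1)$ and $\ve>0$, I would produce a Morse–Smale $f$ that is $\ve$-$C^1$-close to $g$ and whose periodic points, derivatives at periodic points, and transition behavior are as regular as we like (in particular $C^\infty$, and with the germ at each periodic point linearizable or at least affine in a chart). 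This is the standard Morse–Smale density/perturbation argument on the circle, so I would invoke it without reproving it.

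The heart of the argument is the embedding of $f^q$ in a flow. Restrict attention to one fundamental interval $I=[p,p']$ bounded by two consecutive periodic points, with $f^q|_I$ a fixed-point-free orientation-preserving diffeomorphism of $I$ onto itself. The key step is to construct a $C^\infty$ vector field $X_I$ on the interior of $I$, vanishing to the correct order at the endpoints, whose time-one map equals $f^q|_I$; equivalently, one must find a $C^\infty$ "fundamental domain" reparametrization conjugating $f^q|_I$ to the translation $t\mapsto t+1$ on $\RR$. Here is where the Mather invariant enters: the obstruction to smoothly embedding a fixed-point-free interval map in a flow is precisely the Mather invariant (a diffeomorphism of an annulus comparing the linearizations at the two endpoints via the fundamental-domain circles). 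I would arrange during the perturbation step that near each periodic point $f$ is $C^\infty$-linearizable, with eigenvalues chosen so that the north and south linearizations are compatible; then the Mather invariant is trivial (conjugate to a rotation, in fact to the identity), and one can glue the local flows near the endpoints with a flow on the interior to get a global $C^\infty$ vector field $X$ on $S^1$ whose time-one map is $f^q$ on each fundamental interval and which vanishes exactly at the periodic points with the prescribed hyperbolic linear part. By construction $X$ is Morse–Smale, and $f^q=X_1$ (time-one map).

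Finally I would identify the centralizer. Since $f^q$ is the time-one map of $X$, the whole flow $(X_t)_{t\in\RR}$ commutes with $f^q$; and $f$ commutes with $f^q$ hence permutes the orbits of $X$ in a way compatible with the flow, so $f$ normalizes $\{X_t\}$, and in fact $f$ commutes with $X_t$ for all $t$ because $f$ conjugates $X$ to itself and the conjugacy must be trivial on the $1$-parameter group (the only continuous automorphisms of $\RR$ are scalings, and $f$ preserves the time-one map, forcing the scaling to be $1$ — here one uses that $f$ fixes the periodic orbits setwise and acts on the fundamental domains by the flow's time advance). Conversely, any $h\in C(f)$ preserves the periodic orbits and their stable/unstable structure, hence preserves each fundamental interval, and on each such interval commuting with the fixed-point-free map $f^q=X_1$ forces $h$ to be $X_t$ for some $t$ (the centralizer of a translation in $\Diff(\RR)$ is exactly the translations); a $\ZZ/q\ZZ$ ambiguity comes from composing with iterates of $f$ itself relative to the flow. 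Putting these together gives $C(f)\cong\RR\times(\ZZ/q\ZZ)$, with the $\RR$ factor the flow of $X$ and the finite factor generated by $f$ modulo the flow.

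The main obstacle is the embedding-in-a-flow step, i.e.\ controlling the Mather invariant: one must perform the initial $C^1$-small perturbation of $g$ carefully enough that the resulting $f$ is simultaneously (i) $C^\infty$, (ii) linearizable near each periodic point, and (iii) has vanishing Mather invariant on every fundamental interval, all while staying $C^1$-close to $g$. The $C^1$ topology is generous enough to allow large changes in higher-order data, so there should be enough room to kill the Mather invariant; making this precise — choosing the linearizing charts and the eigenvalue data consistently around each periodic orbit and checking the glued vector field is globally $C^\infty$ — is the technical core of the proof.
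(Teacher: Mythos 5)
Your outline correctly identifies the three stages of the paper's argument (reduce to $C^\infty$ Morse--Smale diffeomorphisms that are affine near their periodic points, kill the Mather invariant on each fundamental interval by a $C^1$-small perturbation, then compute the centralizer), but there are genuine gaps in both of the non-routine steps.

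The mechanism you propose for trivializing the Mather invariant does not work. You suggest choosing the linearizations and eigenvalues at the two endpoints "compatibly" so that the Mather invariant becomes a rotation; but the Mather invariant depends on the whole germ of $f^q$ along the fundamental interval, not merely on the endpoint linearizations, and for a generic $f\in D_{\alpha,\beta}$ it is a nontrivial element of $\Diff(S^1)$ modulo rotations no matter how the charts and the eigenvalues $\alpha,\beta$ are chosen. In your last paragraph you retreat to saying the $C^1$ topology is "generous enough" to leave room, which is exactly where the content lies. The paper's actual mechanism (Proposition~\ref{p.contre-exemple}) is: write $\De_{f,\varphi}^{-1}$, via the fragmentation lemma, as a composition of diffeomorphisms of $S^1$ that are $C^1$-small and of small support; lift each factor into a distinct fundamental domain of $f^q$ located deep inside the interval; and replace $f$ by $f$ composed with these lifts. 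The key estimate (Lemma~\ref{l.voisinages2}) is that in the logarithmic chart $\varphi$ such a lift has the \emph{same} $C^1$-distance to the identity regardless of which fundamental domain it sits in, because the lifts in consecutive domains are conjugate by an integer translation; this renormalization is what keeps the perturbation of $f$ uniformly $C^1$-small even though the fundamental domains of $f^q$ shrink geometrically. Without this step the proof does not close.

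Your centralizer computation also contains a false claim and omits a case. The parenthetical "the centralizer of a translation in $\Diff(\RR)$ is exactly the translations" is wrong: the centralizer of $t\mapsto t+1$ in $\Diff(\RR)$ is the large group of lifts of circle diffeomorphisms. What actually forces $h\in C(f)$ into the flow of $X$ is the boundary rigidity built into $\cD_0$: since $f^q$ is affine near each periodic point, any $h$ commuting with $f^q$ and fixing that point must itself be affine there, hence coincides with some $X^t$ on the whole basin, and this propagates through overlapping basins; and the hypothesis that distinct periodic orbits have distinct derivatives is what ensures $h$ preserves each periodic orbit rather than permuting them. Even after all this, $C(f)\cong\RR\times\ZZ/q\ZZ$ is \emph{not} automatic for every $f\in\cD_0^*$: when $f$ preserves orientation and has exactly two periodic orbits, an orientation-reversing element may survive in $C(f)$, giving $\RR\times\ZZ/q\ZZ\times\ZZ/2\ZZ$. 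The paper handles this by introducing a further conjugacy invariant $\theta_f$, showing it vanishes exactly when such a symmetry exists, perturbing to make $\theta_f\neq 0$, and taking the final $\cD$ to be $(\cD_0^*\setminus\cD_1)\cup\cD_1^*$. Your argument would assert the wrong centralizer on the open subset $\cD_1$ where this special case occurs.
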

(Section~\ref{ss.questioncircle} presents open questions on
centralizers of diffeomorphisms in $\Diff^1(S^1)$.)

As for compact surfaces, the case of the sphere is specific because
of the existence north pole-south pole diffeomorphisms. The symmetries of these dynamics
allow us to get a centralizer isomorphic to $S^1\times \RR$ and
this is one of the reasons why we present this case separately.

Another specific feature of the north-south diffeomorphisms of the sphere
is that for these maps it is possible to generalize the so-called {\em Mather
invariant}, introduced by Mather in the one-dimensional case.
Such an invariant plays a fundamental role in our constructions: the
Mather invariant of a diffeomorphism $f$ is trivial if and only if
$f$ can be perturbed to become the time one map of a vector field.

\begin{theo}\label{t=sphere} Let $\cO\subset \Diff^{~1}(S^2)$ denote the (open)
subset of Morse-Smale diffeomorphisms $f$ such that the nonwandering
set $\Om(f)$ consists of two fixed points, one source $N_f$ and one
sink $S_f$, such that the derivatives $D_{N_f}f$ and $D_{S_f}f$ have
each a complex (non-real) eigenvalue.

Then there is a dense subset $\cD\subset\cO$ such that every
$f\in\cD$ is the time one map of a Morse-Smale $C^\infty$-vector
field. Furthermore $C(f)$ is   isomorphic to $\RR\times S^1$.
\end{theo}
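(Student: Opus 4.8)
The plan is to reduce, after a $C^1$-small perturbation, to a diffeomorphism that is smoothly conjugate to the time-one map of a fixed rotationally symmetric model flow: conjugacy to a flow map yields the embedding into a vector field, while the rotational symmetry of the model produces the extra circle in the centralizer. The bridge between an arbitrary $f\in\cO$ and such a model is the \emph{Mather invariant} — a diffeomorphism between the two torus orbit spaces attached to the source and the sink — and the crux of the argument is that this invariant can be prescribed, within the (forced) isotopy class, by a $C^1$-small perturbation.

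\emph{Localization and the invariant.} First I would perturb $f$, $C^1$-slightly, to be of class $C^\infty$ and, on small balls $U_{N_f}\ni N_f$ and $U_{S_f}\ni S_f$, to coincide with its linear parts $z\mapsto\la_N z$ and $w\mapsto\la_S w$ (with $\la_N,\la_S\in\CC\setminus\RR$, $|\la_N|>1>|\la_S|$); this is just a matter of replacing $f$ by $D_{N_f}f$ and $D_{S_f}f$ near the fixed points and interpolating. The punctured neighbourhoods $U_{N_f}\setminus\{N_f\}$ and $U_{S_f}\setminus\{S_f\}$, divided by the $\ZZ$-action of $f$, are the tori $\TT_N=\CC^*/(z\sim\la_N z)$ and $\TT_S=\CC^*/(w\sim\la_S w)$; since $N_f$ is the only source and $S_f$ the only sink, their inclusions into $S^2\setminus\{N_f,S_f\}$ induce diffeomorphisms onto the common orbit space $(S^2\setminus\{N_f,S_f\})/f$, and the Mather invariant $\Psi_f\colon\TT_N\to\TT_S$ is the resulting comparison map. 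It is defined only up to the groups of affine automorphisms coming from changes of linearizing chart on either side; among those near $N_f$ are ``winding'' twists $z\mapsto z\,e^{i\phi(|z|)}$ with $\phi(|\la_N|\,|z|)=\phi(|z|)+2\pi k$, and these account for the mapping-class ambiguity of $\Psi_f$ (orientation being fixed, since a complex eigenvalue makes $f$ orientation preserving).

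\emph{Prescribing the invariant (the main obstacle).} Fix a rotationally symmetric Morse-Smale $C^\infty$ flow $\Phi$ on $S^2$, linear near its two fixed points, with source $N_f$, sink $S_f$, and the same linear parts as $f$; put $h=\Phi^1$. I would show that a $C^1$-small perturbation of $f$, supported away from $N_f$ and $S_f$ in $S^2\setminus(U_{N_f}\cup U_{S_f})$, yields $g\in\cO$ whose Mather invariant equals that of $h$ (in the orbit sense). This is the technical heart. The mechanism: any $\psi\in\Diff^\infty(\TT_S)$ isotopic to the identity factors as $\psi_1\circ\cdots\circ\psi_k$ with each $\psi_i$ $C^1$-close to $\id$; choosing thin annuli $B_1,\dots,B_k$ with pairwise disjoint orbits, stacked across $S^2\setminus(U_{N_f}\cup U_{S_f})$ so that the forward orbit of every point meets $B_k,\dots,B_1$ in succession, one realizes each $\psi_i$ by composing $f$ with a $C^1$-small diffeomorphism supported in $B_i$; the composite $g$ is $C^1$-close to $f$, still lies in $\cO$, and its Mather invariant differs from $\Psi_f$ by $\psi$. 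One then takes $\psi$ to carry $\Psi_f$ onto $\Psi_h$, which is legitimate because the winding freedom of the previous step equalizes their mapping classes, so that $\psi$ is indeed isotopic to the identity. The delicate points are the precise definition of $\Psi_f$ and its behaviour under perturbation, the bookkeeping showing that the stacked local perturbations compose to exactly the prescribed element of $\Diff^\infty(\TT_S)$ without interfering with one another, and the mapping-class matching.

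\emph{Conjugacy and centralizer.} With $\Psi_g=\Psi_h$ and equal linear parts, one patches the affine identifications near $N_f$ and $S_f$ with the identification of orbit spaces to build a $C^\infty$-conjugacy $\eta$ with $g=\eta\circ\Phi^1\circ\eta^{-1}=(\eta_*\Phi)^1$; hence $g$ is the time-one map of a Morse-Smale $C^\infty$ vector field, and $C(g)=\eta\,C(h)\,\eta^{-1}$. It remains to identify $C(h)$. Any $g'\in C(h)$ fixes the source $N_f$ and the sink $S_f$; near $S_f$ it commutes with the conformal contraction $w\mapsto\la_S w$, which forces $g'$ to equal its linear part — multiplication by some $\mu(g')\in\CC^*$ — on a neighbourhood of $S_f$. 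The homomorphism $g'\mapsto\mu(g')$ is injective, since $\mu(g')=1$ makes $g'$ the identity near $S_f$ and therefore, by commutation with $h$, on the whole basin $S^2\setminus\{N_f\}$ of the sink; and it is onto, because $\Phi^t$ gives $\mu=\la_S^{\,t}$ while the rotations $R_s$, which commute with the symmetric $\Phi$ and hence with $h$, give $\mu=e^{is}$, and these two one-parameter subgroups generate $\CC^*$ as $|\la_S|\neq1$. Thus $C(h)\cong\CC^*\cong\RR\times S^1$, and the same holds for $C(g)$; letting $f$ range over $\cO$ then produces the required dense subset $\cD$.
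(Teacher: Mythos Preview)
Your outline matches the paper's strategy closely: define a Mather invariant on the torus orbit space, prescribe it by lifting a fragmentation of the required correction to $C^1$-small perturbations of $f$, and read off the centralizer from the conformal local model at the sink. The conjugacy-to-a-model variant you describe is equivalent to the paper's direct construction of two commuting invariant vector fields once the invariant is a translation, and your centralizer computation via the homomorphism $g'\mapsto\mu(g')\in\CC^*$ is exactly the paper's lemma on local commutants of a non-real conformal contraction.

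There is, however, a genuine gap in the perturbation step. You place the supports $B_1,\dots,B_k$ in the compact transition region $S^2\setminus(U_{N_f}\cup U_{S_f})$. That region is crossed by every orbit in a bounded number $N$ of iterates of $f$, so it can accommodate at most about $N$ disjoint layers in which to stack your supports; but the number $k$ produced by the fragmentation lemma depends on how $C^1$-small each $\psi_i$ must be, hence on the target neighbourhood $\cU$ of $f$, and is not bounded by $N$. The paper resolves this by placing all the supports \emph{inside} the conformal neighbourhood of the sink, where there are infinitely many disjoint fundamental layers $D_i$. The price is that one must control the $C^1$-size of the lifts at arbitrary depth, and this is precisely where the complex-eigenvalue hypothesis does its real work: since $B$ is the composition of a rotation with a homothety, conjugation by $B^{j-i}$ preserves $\sup_x\|D_xh-\mathrm{Id}\|$, so the lift $\theta_i(\psi)$ of a small $\psi$ stays uniformly small regardless of the depth $i$. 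You invoke the complex eigenvalue only to get orientation-preservation; its decisive role is this $C^1$-estimate, without which the whole scheme of ``lift the fragmentation'' does not close up. Moving your supports into the linear sink neighbourhood and inserting this conformal-conjugacy observation fixes the argument.
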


Theorem~\ref{t=sphere} is a bridge between the one-dimensional case
and the general two-dimensional case.
One the one hand, north pole-south pole dynamics on the sphere and Morse-Smale
dynamics on the circle share the Mather invariant; on the other hand, other features of
these dynamics on the sphere occur in simple dynamics on a general compact surface.  The
general case is solved by a combination of the arguments used for
the sphere in a neighborhood of the sinks and the sources,
together with an analysis of the local situation in a neighborhood
of the saddles.

\begin{theo}\label{t.surface} Let $S$ be a  connected closed surface. Let $\cO\subset \Diff^{~1}(S)$ be the set
of Morse-Smale diffeomorphisms $f$ such that:
\begin{itemize}
\item any periodic point is a (hyperbolic) fixed point,
\item $f$ has at least one hyperbolic saddle point,
\item for any hyperbolic saddle $x$, every eigenvalue of $Df(x)$ is positive,
\item for any sink or source $x$, the derivative $Df(x)$ has a complex (non-real) eigenvalue,
\item there are no heteroclinic orbits: if $x\neq y$ are saddle points then $W^s(x)\cap W^u(y)=\emptyset$.
\end{itemize}
Then $\cO$ is a non-empty open subset of $\Diff^{~1}(S)$ and there
is a dense subset $\cD\subset \cO$ such that every $f\in\cD$ is the
time one map of a Morse-Smale $C^\infty$-vector field. Furthermore,
$C(f)$ is the flow of this vector field, hence isomorphic to~$\RR$.
\end{theo}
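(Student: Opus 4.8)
The plan is to build, for an open set of Morse--Smale surface diffeomorphisms of the prescribed type, a $C^1$-dense subset whose elements are time-one maps of Morse--Smale vector fields, by \emph{prescribing local models near each singular orbit and around each separatrix, and then perturbing to glue these models consistently}. The hypotheses are chosen precisely so that the gluing can succeed: positivity of the saddle eigenvalues means each saddle is locally $C^\infty$-conjugate (after perturbation, via a Sternberg/linearization argument) to the time-one map of a linear hyperbolic vector field with positive eigenvalues, so there is no orientation obstruction along the stable and unstable separatrices; the complex eigenvalue condition at sinks and sources gives, exactly as in Theorem~\ref{t=sphere}, enough rotational symmetry in the local models at those points to absorb the Mather-type invariants measuring the failure of the global dynamics to embed in a flow; and the absence of heteroclinic intersections ($W^s(x)\cap W^u(y)=\emptyset$ for distinct saddles $x\neq y$) means the ``transition maps'' one must reconcile are each attached to a single saddle, so there is no compatibility condition linking two different saddles.

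\textbf{Step 1 (normal forms near fixed points).}
First I would show that, after an arbitrarily small $C^1$-perturbation supported near the finitely many fixed points, $f$ is $C^\infty$ and each fixed point is the time-one map of a linear vector field on a local chart: at saddles the linear field has two positive real eigenvalues (using the third hypothesis); at sinks and sources the linear field is a linear contraction/expansion whose rotational part is nontrivial (using the fourth hypothesis), so its flow commutes with a circle action. This is a standard local perturbation-and-linearization step (Sternberg linearization in the $C^\infty$ category, together with the observation that linear maps with the given spectra embed in linear flows).

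\textbf{Step 2 (normal forms along separatrices, and the Mather obstruction).}
Next, along each saddle separatrix one follows the orbit until it accumulates on a sink or source. Using the linear models at the two endpoints, the dynamics of $f$ in a neighborhood of the closure of such a separatrix is determined, up to the choice of a \emph{transition diffeomorphism} (the analogue of Mather's invariant) measuring how the unstable-side coordinates of the saddle map into the rotationally-symmetric coordinates around the sink. Because the sink model carries a free circle action and an $\RR$-action (its linear flow), I would show that this transition map can be $C^1$-perturbed — modifying $f$ only in a fundamental domain of a flow-box along the separatrix — so that it becomes \emph{compatible with a flow}: precisely, so that there is a local vector field whose time-one map agrees with $f$ on that neighborhood and whose flow is subordinate to the chosen linear models at both ends. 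The no-heteroclinic hypothesis guarantees these flow-boxes, one per separatrix, can be taken pairwise disjoint away from the fixed points, so the perturbations do not interfere.

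\textbf{Step 3 (global assembly).}
Finally I would patch the local vector fields (linear near fixed points, the ones produced in Step~2 along separatrices, and an arbitrary completion on the complement, which is a union of ``wandering'' regions foliated by orbits going from a source to a sink) into a single global $C^\infty$ vector field $X$ on $S$, using that the surface decomposes along the saddle separatrices into cells on which $f$ is a North--South-type map; on each such cell the argument of Theorem~\ref{t=sphere} (or an elementary version of it, since there are no recurrent dynamics) lets one realize $f$ restricted to the cell as a time-one map compatible with the prescribed boundary behavior. One checks $X$ is Morse--Smale (its singularities are exactly the fixed points of $f$ with the prescribed linearizations, and it has no other recurrence), that its time-one map equals the perturbed $f$, and that $C(f)=\{\varphi_t^X\}$: any $g$ commuting with $f$ must fix each singular fixed point and preserve each separatrix (by the Morse--Smale combinatorics, which has no symmetry since the eigenvalues are all real positive at saddles and the sink/source rotation numbers are generically distinct and irrational), hence commutes with the linear flow near each fixed point and therefore, by unique integration along orbits, coincides with some $\varphi_t^X$; so $C(f)\cong\RR$.

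\textbf{Main obstacle.}
The crux is Step~2: controlling the Mather-type transition invariant along \emph{each} separatrix and, simultaneously, making the many local choices fit together into one globally smooth $f$ and one globally smooth $X$. The rotational symmetry at sinks/sources is what makes the individual invariants killable; the no-heteroclinic hypothesis is what decouples the separatrices from one another. The delicate point is that perturbing the transition map near one sink must be done consistently for \emph{all} separatrices accumulating on that same sink, since they share one local model there — one must check that the finitely many fundamental-domain perturbations around a common sink can be chosen to agree with a single local flow. I expect this to be handled exactly as in the sphere case (Theorem~\ref{t=sphere}), where one already confronts two separatrices limiting on a common sink, extended to an arbitrary finite number by an inductive or simultaneous-interpolation argument.
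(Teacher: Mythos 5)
Your overall strategy --- linearize at fixed points, exploit the rotational symmetry at sinks and sources to kill a Mather-type obstruction, and then patch local flows into a global one --- is in the spirit of the paper's proof, but there are two substantive differences, one organizational and one a genuine gap.

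The organizational difference is in Step~2. You propose to perturb $f$ separately in flow-boxes along each saddle separatrix and then confront, in your ``Main obstacle'' paragraph, the compatibility problem when several separatrices accumulate on the same sink. The paper's decomposition avoids this issue entirely. It first constructs, \emph{without any perturbation of $f$}, a single complete vector field $Y$ on the punctured surface $S\setminus(\alpha_f\cup\omega_f)$ whose time-one map is $f$ (Propositions~\ref{p.saddle} and~\ref{p.punctured}). Near saddles this uses positivity of eigenvalues and the no-heteroclinic hypothesis, as you anticipate. The key step is extending the vector field over $W^s(q)\setminus\{q\}$ for each sink~$q$: on the orbit torus $T_q=(W^s(q)\setminus\{q\})/f$ the trace of the partial vector field coming from the saddle neighborhood is a finite union of compact annuli foliated by closed orbits of period one, all in a fixed primitive homology class, and such a foliated subset can always be completed to a vector field on the whole torus with all orbits closed of period one (Lemma~\ref{l.vectorus}) --- no perturbation of $f$ is needed and no separatrix-by-separatrix interpolation occurs. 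Only afterward does the paper compare the resulting vector field on $T_q$ with the linear model coming from $Df(q)$; the discrepancy is a single diffeomorphism $\psi_{f,q}$ of $T_q$ \emph{per sink}, which is killed exactly as on $S^2$ by a perturbation supported in a punctured disc about $q$ (Proposition~\ref{p.perturb}). There is one perturbation per sink or source, not one per separatrix, so the consistency problem you flag never arises.

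The genuine gap is in Step~3, where you assert that any $g\in C(f)$ must ``preserve each separatrix (by the Morse--Smale combinatorics, which has no symmetry\ldots)'' and conclude $C(f)\cong\RR$. This does not follow. Even after linearization and with distinct determinants at the fixed points (so $g$ fixes each fixed point), $g$ may \emph{exchange} the two unstable separatrices $W^u_+(p)$ and $W^u_-(p)$ of a saddle $p$. The paper proves (Proposition~\ref{p.separatrice} and the corollary following it) that for $f\in\cD_2$ the centralizer is either the flow of $X$ (hence $\cong\RR$) or is $\cong\RR\times\ZZ/2\ZZ$, the second case being realized precisely by such an involution. Your appeal to ``generically distinct and irrational sink/source rotation numbers'' cannot exclude this, because the would-be involution acts within a \emph{single} sink basin, so distinctness of rotation data at distinct sinks is irrelevant. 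To obtain $C(f)\cong\RR$ on a dense subset, the paper introduces a further conjugacy invariant $\delta^u(f)$, built from the angular separation of the projections of $W^u_+(p)$ and $W^u_-(p)$ on the affine torus $T_q$, proves that $\delta^u(f)=0$ whenever the involution exists (Lemma~\ref{l.opposite}), and then shows $\delta^u(f)\neq0$ can be arranged by one more small perturbation. Without this last step, or an explicit substitute for it, the final conclusion of your Step~3 is unjustified.
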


An important hypothesis in Theorems~\ref{t=sphere}
and~\ref{t.surface} (which holds trivially in Theorem~\ref{t=main2})
is that the derivative at each sink and source is
conjugate to the composition of a homothety with a (non-trivial)
rotation. This condition is open in dimension $2$, but is nowhere dense
in higher dimension. This explains why we are not able to obtain the
local density of the embeddability in a flow in higher dimension,
and naturally leads to the following question:

\begin{ques} Let $M$ be a compact manifold of dimension $d\geq 3$.
Denote by $\cO$ the $C^1$-interior of the $C^1$-closure of the set
of diffeomorphisms which are the time one map of a flow. Is $\cO$ empty?
\end{ques}

In low dimension we find large centralizers among the simplest
dynamical systems (the Morse-Smale systems). By contrast, in
higher dimension we will use $C^1$-open subsets of {\em wild
diffeomorphisms} to obtain periodic islands  where the return
map is the identity map.  The resulting large centralizers for these wild diffeomorphisms are
completely different. In low dimension, we embed the diffeomorphisms in a flow,
and the centralizer is precisely the flow;
hence all the diffeomorphisms in the centralizer have the same
dynamics. In higher dimension, the diffeomorphisms we exhibit  in
the centralizer will be equal to the identity map everywhere but in
the islands, in restriction to which they can be anything. This explains
our result:

\begin{theo}\label{t.wild} Given any compact manifold $M$ of dimension $d\geq 3$, there is a non-empty open subset $\cO\subset\Diff^{~1}(M)$
and a dense part $\cD\subset \cO$ such that every $f\in\cD$ has  non-trivial centralizer.

More precisely, for $f\in \cD$ the centralizer $C(f)$ contains a
subgroup isomorphic to the group $\Diff^{~1}(\RR^d,\RR^d\setminus \DD^d)$ of diffeomorphisms
of $\RR^d$ which are equal to the identity map outside the unit disc
$\DD^d$.
\end{theo}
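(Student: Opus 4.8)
The plan is to construct, in any dimension $d\geq 3$, a $C^1$-open set of diffeomorphisms each possessing a periodic "island" on which the first return map is the identity, and then to use these islands to manufacture a large centralizer. The key source of such islands is the theory of $C^1$-wild diffeomorphisms: one starts from a known $C^1$-open set $\cU\subset\Diff^1(M)$ (for instance one of the examples of Bonatti--D\'\i az) in which every diffeomorphism has a nonempty homoclinic class that is robustly non-hyperbolic, and, crucially, for which the set of diffeomorphisms exhibiting a periodic disc — a periodic point $p$ of some period $n$ whose orbit has a neighborhood $U\sqcup f(U)\sqcup\dots\sqcup f^{n-1}(U)$ with $f^n|_U=\id$ — is dense in $\cU$. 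First I would recall or set up such a $C^1$-open set $\cO\subset\cU$; the existence of periodic cubes/discs with identity return map in a $C^1$-dense subset of a robustly transitive (or wild) open set is the technical input, and I would cite or reprove the relevant perturbation lemma (a $C^1$ Franks-type lemma creating a small periodic ball on which the return dynamics is trivialized).

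Next, given $f$ in the dense subset $\cD\subset\cO$ with a periodic island $U, f(U),\dots,f^{n-1}(U)$, $f^n|_U=\id$, I would fix a $C^1$ diffeomorphism $\phi\colon \DD^d\to \bar U$ onto the closure of one plaque of the island (shrinking $U$ if necessary so that its closure is a genuine embedded disc and the plaques in the orbit are pairwise disjoint). Then for any $h\in\Diff^1(\RR^d,\RR^d\setminus\DD^d)$ — i.e. $h=\id$ outside $\DD^d$ — I define $\Phi_h\colon M\to M$ by $\Phi_h=\id$ outside $\bigcup_{i=0}^{n-1} f^i(U)$, $\Phi_h=\phi\circ h\circ\phi^{-1}$ on $U$, and $\Phi_h=f^i\circ\Phi_h\circ f^{-i}$ on $f^i(U)$ for $0<i<n$. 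Since $h$ is the identity near $\partial\DD^d$, each $\Phi_h$ is a well-defined $C^1$ diffeomorphism of $M$. The verification that $\Phi_h$ commutes with $f$ is a direct computation: on $f^i(U)$ for $i<n-1$, $f\circ\Phi_h = f\circ f^i\Phi_h f^{-i} = f^{i+1}\Phi_h f^{-i} = (f^{i+1}\Phi_h f^{-(i+1)})\circ f = \Phi_h\circ f$; on $f^{n-1}(U)$ one uses $f^n|_U=\id$ to close the cycle; and outside the island both $f\Phi_h$ and $\Phi_h f$ send each point the way $f$ does (one must also check $f(\bigcup f^i(U))=\bigcup f^i(U)$, which is immediate from the definition of the island). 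Finally the map $h\mapsto \Phi_h$ is an injective group homomorphism from $\Diff^1(\RR^d,\RR^d\setminus\DD^d)$ into $C(f)$, which proves the "more precisely" part; since this group is uncountable, $C(f)$ is non-trivial, giving the first assertion.

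The main obstacle is the first step: producing the $C^1$-open set $\cO$ together with the $C^1$-dense family of diffeomorphisms having a periodic island with identity return map. Merely having periodic points with a chosen derivative (via Franks' lemma) is not enough; one needs the stronger statement that the local return map can be made the \emph{identity} on a full neighborhood of the periodic orbit, and that this can be done $C^1$-densely within a fixed open set. This is where "wildness" enters — in a robustly transitive or wild setting one can find periodic points with arbitrarily weak hyperbolicity and then use a $C^1$-small perturbation supported near the orbit to flatten the return map to the identity on a small ball, while the robust transitivity of the ambient class keeps us inside $\cO$ and guarantees the island is a proper sub-dynamics rather than all of $M$. I expect the rest — the explicit construction of $\Phi_h$, the commutation check, and the injectivity of $h\mapsto\Phi_h$ — to be routine bookkeeping, with the only care needed being that the plaques $f^i(\bar U)$ are pairwise disjoint and that $h$'s support stays compactly inside $\DD^d$ so that all the glued pieces are $C^1$ at the seams.
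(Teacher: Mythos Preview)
Your proposal is correct and follows essentially the same approach as the paper: reduce to the existence of a periodic ball on which the first return map is the identity (obtained in the paper as a corollary of an intermediate result producing a periodic point with $Df^n(x)=\mathrm{Id}$, via absence of dominated splitting and \cite{BDP}, then a local flattening perturbation), and then build the injective homomorphism $h\mapsto\Phi_h$ into $C(f)$ by conjugating $h$ along the orbit of the ball. Your identification of the main obstacle and of the two-step mechanism (first control the derivative, then flatten the return map on a neighborhood) matches the paper's argument exactly.
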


The large centralizer we build for a diffeomorphism in Theorem~\ref{t.wild} consists of
diffeomorphisms which have a very small support, and which are therefore $C^0$-close to the identity.
It would be interesting to know if this
is always the case. Let us formalize this question:

\begin{ques} Let $M$ be a compact manifold with $\dim(M)\geq 3$ and $\varepsilon>0$.
Let $\cO_\varepsilon\subset \Diff^1(M)$ be the set of
diffeomorphisms $f$ such that, for every $g\in C(f)$ there exists
$n\in\ZZ$ such that $g\circ f^n$ is $\varepsilon$-close to the identity map
for the $C^0$-distance. Does $\cO_\varepsilon$ contain a dense open
subset of $\Diff^1(M)$ for every $\varepsilon$?
\end{ques}

If for non-conservative diffeomorphisms the existence of periodic
islands depends on wild dynamics, the same islands appear in a more natural way
for symplectic diffeomorphisms in a neighborhood of totally elliptic
points. In order to state precisely this last  result we need some
notations. Let $M$ be a compact manifold with even dimension $\dim
(M)=2d$. If $M$ carries a symplectic form $\omega$, then we denote by
$\hbox{Symp}^1_{\omega}(M)$ the space of $C^1$ diffeomorphisms of
$M$ that preserve $\omega$ (these diffeomorphisms are called
\emph{symplectomorphisms}).

Recall that a periodic point $x$ of period $n$  of a
symplectomorphism $f$ is called \emph{totally elliptic} if all the
eigenvalues of $Df^n(x)$ have modulus equal to $1$. If $e^{i\alpha}$ is
an eigenvalue of $x$ then $e^{-i\alpha}$ is also an eigenvalue.
Assume that $0<\alpha_1<\cdots<\alpha_{d}<\pi$ are the absolute
values of the argument of the eigenvalues of $x$. Then $x$ is
\emph{$C^1$-robustly totally elliptic}: every symplectomorphism $g$ that is
$C^1$-close to $f$ has a totally elliptic point $x_g$ of period $n$
close to $x$.\footnote{This uses the fact that if $\lambda$ is an eigenvalue of a symplectic matrix, then $\bar \lambda,\frac1\lambda,\frac1{\bar\lambda}$ are also eigenvalues.}

\begin{theo}\label{t.symp}Let $(M,\omega)$ be a symplectic manifold and let $\cO\subset \hbox{Symp}^1_{\omega}(M)$ denote the non-empty
open subset of the symplectomorphisms having a $C^1$-robust totally elliptic periodic point.

Then there is a dense part $\cD\subset \cO$ such that such that every $f\in\cD$ has a non-trivial centralizer.
More precisely, for $f\in \cD$ the symplectic centralizer $C_\omega(f)$ contains a subgroup isomorphic to the group $\Symp^1_\omega(\RR^{2d},\RR^{2d}\setminus \DD^{2d})$  of symplectomorphisms of $\RR^{2d}$ equal to the identity map on
the complement of the
unit disc $\DD^{2d}$.
\end{theo}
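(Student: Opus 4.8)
The plan is to mimic, in the symplectic setting, the construction of large centralizers via ``periodic islands on which the return map is the identity'' that underlies Theorem~\ref{t.wild}, replacing the role of wild non-conservative dynamics by that of totally elliptic periodic points, where such islands arise naturally. Fix $f_0\in\cO$ with a $C^1$-robustly totally elliptic periodic point $x$ of period $n$, with eigenvalue arguments $0<\alpha_1<\cdots<\alpha_d<\pi$. Working in a Darboux chart around the orbit of $x$, the first step is a local normal-form perturbation: I would perturb $f_0$ (in the $C^1$ topology, staying symplectic, supported near the orbit of $x$) to a diffeomorphism $f_1$ which, near $x$, is in Birkhoff normal form to high order, and in fact I want more — I want $f_1^n$ to be \emph{exactly} a rotation $(z_1,\dots,z_d)\mapsto(e^{i\beta_1}z_1,\dots,e^{i\beta_d}z_d)$ on a small polydisc $P$ around $x$, where the $\beta_j$ are close to the $\alpha_j$. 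This is the symplectic analogue of the local normal-form lemmas used for the sources and sinks in Theorems~\ref{t=sphere} and~\ref{t.surface}; concretely one interpolates between the genuine dynamics and a linear rotation using a symplectic cutoff (a Hamiltonian isotopy whose generating function is supported in $P$), and one must check that the interpolation can be made $C^1$-small while remaining a symplectomorphism.

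The second step is to further perturb so that, on a slightly smaller subpolydisc $P'\subset P$, the map $f_1^{nN}$ is the \emph{identity} for some integer $N$ — i.e.\ I want to arrange that the rotation numbers $\beta_j/2\pi$ are all \emph{rational}. Since rational points are dense and the rotation $(\beta_1,\dots,\beta_d)$ can be moved by an arbitrarily $C^1$-small symplectic perturbation supported in $P$ (again a small Hamiltonian twist), I can choose $\beta_j = 2\pi p_j/N$ with a common denominator $N$. Then $f^{nN}$ restricted to $P'$ is the identity, so $P'$, together with its first $nN-1$ iterates (which I arrange to be pairwise disjoint by taking $P'$ small, using that $x$ is genuinely periodic of period $n$ and the iterates of a small enough neighborhood under $f_1$ are disjoint from each other modulo the period), forms a ``periodic island'' of period $nN$ whose return map is the identity. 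Call the resulting map $f\in\Symp^1_\omega(M)$; by construction $f$ can be taken $C^1$-close to $f_0$, so the set $\cD$ of all such $f$ is dense in $\cO$.

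The third step is to exhibit the claimed subgroup of the centralizer. Given any $h\in\Symp^1_\omega(\RR^{2d},\RR^{2d}\setminus\DD^{2d})$, rescale it to act on a disc $\DD\subset\inte(P')$ as a symplectomorphism equal to the identity outside $\DD$, and define $g\colon M\to M$ to be $h$ on $\DD$, and the identity everywhere else on $M$. Since $f^{nN}$ is the identity on $P'\supset\DD$ and permutes the iterates of $P'$ cyclically, $g$ commutes with $f$: on the island $f$ just shuffles the $nN$ copies of $P'$ rigidly and $g$ is supported in one copy, while away from the island both relevant compositions are controlled because $g$ is the identity there and $f$ maps the complement of the island into itself. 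One checks $fg=gf$ by cases on the $nN$ pieces of the island and the complement. This gives an injective homomorphism $h\mapsto g$ from $\Symp^1_\omega(\RR^{2d},\RR^{2d}\setminus\DD^{2d})$ into $C_\omega(f)$, as required; in particular $C_\omega(f)$ is non-trivial and indeed uncountable.

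The main obstacle is the first step: producing, by a genuinely $C^1$-small \emph{symplectic} perturbation, a map that is \emph{exactly} a (rational) linear rotation on an honest open polydisc. One has to linearize the symplectomorphism near the elliptic point to high order via Birkhoff normal form — which in general is only a \emph{formal} normal form and may be obstructed by resonances among the $\alpha_j$ — and then cut off the remainder. The point is that we do not need convergence of the Birkhoff series: a finite-order normal form plus a Hamiltonian cutoff whose size in the $C^1$ norm is controlled by the size of the polydisc suffices, and crucially we get to \emph{choose} the final rotation vector $(\beta_1,\dots,\beta_d)$ freely (in particular rationally), so resonances of the original $\alpha_j$ are harmless — we simply steer the frequencies to a convenient rational point during the perturbation. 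Making the symplectic cutoff quantitatively $C^1$-small (so that the total perturbation from $f_0$ to $f$ is as small as we wish) is the technical heart, and it parallels the corresponding gluing/cutoff estimates carried out for sinks and sources elsewhere in the paper.
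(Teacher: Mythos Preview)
Your overall strategy matches the paper's, but there is a genuine error in your third step. You define $g$ to be $h$ on a single disc $\DD\subset P'$ and the identity elsewhere; this $g$ does \emph{not} commute with $f$. Indeed, for $x\in\DD$ with $h(x)\neq x$ one has $gf(x)=f(x)$ (since $f(x)\in f(\DD)$, which is disjoint from $\DD$, so $g$ acts trivially there) while $fg(x)=f(h(x))\neq f(x)$. The fix, which is exactly what the paper does, is to spread $g$ over the whole orbit of the island: set $g_i=f^i\circ g_0\circ f^{-i}$ on $f^i(\DD)$ for $i=0,\dots,nN-1$ and glue; the resulting $g_h$ is then $f$-equivariant by construction. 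This also makes the injectivity of $h\mapsto g_h$ transparent.

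Separately, you overestimate the difficulty of your first step. You do not need Birkhoff normal form or any convergence issues: the paper reduces this to two elementary perturbation lemmas obtained via generating functions. A symplectic Franks lemma lets you replace $Df^n(x)$ by any nearby symplectic linear map (in particular, one whose eigenvalue arguments are rational multiples of $2\pi$), and a separate linearizing-perturbation lemma lets you perturb $f$ so that, in a Darboux chart at $x$, the return map $g^n$ is \emph{exactly} its derivative on a neighborhood of $x$. Combining the two gives a rational rotation on a small ball directly, with no appeal to normal-form theory. So what you flagged as the ``technical heart'' is in fact the cheapest part of the argument; the only thing to be careful about is the equivariant extension in step three.
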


We remark that, according to \cite{ABC}, $C^1$-generic
symplectomorphisms of connected manifolds are transitive, and the manifold is a unique
homoclinic class. In particular, there is a $C^1$-dense and open
subset of symplectomorphisms having a hyperbolic periodic point with
some transverse homoclinic intersection. Such a transverse homoclinic
intersection prevents the diffeomorphism from being embedded in a
flow: thus non-embeddability in a flow is satisfied on a $C^1$-open
and dense subset of $\Symp^1_\omega(M)$. The same argument holds for
volume preserving diffeomorphisms, showing that, if $\mu$ is a
smooth volume form on a manifold $M$ of dimension $\dim(M)\geq 2$,
then the non embeddability in a flow is satisfied on a $C^1$-open
and dense subset of $\Diff^1_\mu(M)$. However we have not been able
to extend our main theorem for volume preserving
diffeomorphisms:

\begin{ques}Let $M$ be a closed manifold endowed with a smooth volume form $\mu$. Does there exist a non-empty open subset
$\cO\subset \Diff^1_{\mu}(M)$ and a dense subset $\cD\subset \cO$
such that for every $f\in \cD$ the centralizer $C_{\mu}(f)$ is not
trivial ?
\end{ques}

\section{Local embeddability in a flow on the circle}\label{s.flow}
\subsection{Preparation of diffeomorphisms of the circle}
The following proposition summarizes some very classical properties of diffeomorphisms of the circle.

\begin{prop}\label{p.reduction}
Let $\cD_0\subset \Diff^{~1}(S^1)$ be the set of diffeomorphisms $f$ satisfying the following properties:
\begin{itemize}
\item $f$ is a $C^\infty$ Morse-Smale diffeomorphism (i.e. the non wandering set consists of
finitely many hyperbolic periodic points, alternately attracting or
repelling);
\item for every periodic point $x\in Per(f)$, there is a neighborhood $U_x$ of $x$ such that  the restriction  $f|_{U_x}\colon U_x\to f(U_x)$
is  an affine map (for the natural affine structure on
$S^1=\RR/\ZZ$);
\item if $x,y\in Per(f)$ are periodic points with distinct orbits,
then $Df^q(x) \ne Df^q(y)$, where $q$ is the period of $x$.
\end{itemize}
Then $\cD_0$ is dense in $\Diff^1(S^1)$.
\end{prop}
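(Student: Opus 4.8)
The plan is to establish density of $\cD_0$ by starting from an arbitrary $f\in\Diff^1(S^1)$ and producing, inside any prescribed $C^1$-neighborhood of $f$, a diffeomorphism with all three listed properties. The argument proceeds by successive perturbations, each of which is small in the $C^1$ topology and each of which secures one of the three conditions without destroying the ones already obtained.

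First I would make $f$ Morse-Smale and $C^\infty$. This is the classical Kupka--Smale / structural stability package in dimension one: an arbitrarily $C^1$-small perturbation makes the map $C^\infty$ (smoothing), and a further small perturbation removes any non-hyperbolic periodic behavior and any pathological (e.g. Denjoy-type or irrational-rotation-number) behavior so that the nonwandering set becomes a finite union of hyperbolic periodic orbits, alternately attracting and repelling around $S^1$. One clean way to see density of Morse-Smale maps on $S^1$: perturb so the rotation number is rational, then perturb the corresponding periodic orbit(s) to be hyperbolic, and check the dynamics is then gradient-like. Second, near each periodic point $x$ I would locally straighten $f$ to be affine for the standard affine structure on $\RR/\ZZ$. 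Since $x$ is hyperbolic with multiplier $\lambda=Df^q(x)$, by the usual $C^\infty$ linearization (Sternberg, or just an explicit local change of coordinates) one can arrange, after a small perturbation supported in a tiny neighborhood $U_x$ of the orbit, that $f^q$ is exactly $t\mapsto \lambda t$ in an affine chart centered at $x$ — equivalently that $f$ itself is affine on a small $U_x$. These perturbations are localized near the finitely many periodic points and hence can be made jointly $C^1$-small; one must take the neighborhoods $U_x$ disjoint and small enough that the supports do not interfere and hyperbolicity/Morse--Smale structure is preserved.

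Third, I would separate the multipliers: among periodic points with distinct orbits the value $Df^q(x)$ (with $q$ the period of $x$) should be pairwise distinct. For each periodic orbit one can adjust its multiplier by an arbitrarily small amount — e.g. by composing with a diffeomorphism supported in $U_x$ that is affine with a slightly different slope near $x$ and tapers to the identity outside — thereby moving the finitely many multipliers to generic, pairwise-distinct values while keeping each orbit periodic, hyperbolic, and affine in a chart. Since there are only finitely many periodic orbits, finitely many such independent small adjustments suffice. After these three stages the resulting map lies in $\cD_0$ and is $C^1$-close to the original $f$, proving density.

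The main technical point to be careful about — and the step I expect to be the real obstacle — is the simultaneous compatibility of the perturbations: ensuring that imposing the affine-chart condition near periodic points, and then tuning the multipliers, is done with perturbations whose supports are disjoint, localized, and small enough that the Morse--Smale structure (in particular the count and nature of periodic orbits, and the absence of new nonwandering dynamics) is not disturbed. In dimension one this is manageable because away from the periodic points the dynamics is a monotone drift from repellers to attractors, so small compactly supported perturbations in the wandering region cannot create new recurrence; still, the bookkeeping of neighborhood sizes and the explicit construction of the bump-like diffeomorphisms realizing the affine normal form and the multiplier change is where the actual work lies. Everything else is a standard invocation of one-dimensional hyperbolic dynamics and smoothing.
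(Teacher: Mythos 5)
The paper does not actually prove Proposition~\ref{p.reduction}; it presents it as a summary of ``very classical properties of diffeomorphisms of the circle'' and moves on. Your three-stage perturbation scheme (Morse--Smale plus smoothing, then affinizing near the finitely many periodic points by a bump-function interpolation with the first-order Taylor model, then generically separating the multipliers orbit by orbit) is precisely the standard argument that the paper is implicitly invoking, and it is correct. One small caution: invoking Sternberg linearization is a red herring here, since that produces a \emph{conjugacy} to the linear model rather than making $f$ affine for the fixed affine structure on $\RR/\ZZ$; what you actually need, and what you do go on to describe, is the explicit perturbation of $f$ near each point $x_i$ of a periodic orbit so that $f|_{U_{x_i}}$ itself (not just $f^q$) becomes affine --- and the $C^1$-smallness of that interpolation uses that $\|f-(\text{affine model})\|_{C^0}=o(\varepsilon)$ on an annulus of width $\sim\varepsilon$ beats the $\sim 1/\varepsilon$ size of the cutoff derivative. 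With that point made precise your sketch is complete.
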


For $\alpha>1$ and $\beta\in (0,1)$,
we introduce the set $D_{\alpha,\beta}$ of orientation-preserving $C^\infty$ diffeomorphisms
of the interval $[0,1]$ with the following properties:
\begin{itemize}
\item $\{0,1\}$ is the set of fixed points of $f$, and $f(x)>x$ for $x\in(0,1)$;
\item $f(x)=\alpha x$ for small $x$ and $f(x)=1+\beta(x-1)$ for $x$ close to $1$.
\end{itemize}

Applying Proposition~\ref{p.reduction}, we will prove Theorem~\ref{t=main2}
by working inside the space $D_{\alpha, \beta}$.

\subsection{The Mather invariant}
We recall here a construction introduced by J. Mather~\cite{mather} which associates to any diffeomorphism
$f\in D_{\alpha,\beta}$ a class of diffeomorphisms of $S^1$.

Let us fix $\alpha>1$ and $\beta\in (0,1)$ and introduce a $C^\infty$ orientation preserving diffeomorphism
$\varphi\colon (0,1)\to \R$ such that $\varphi(x)= {\ln x}/{\ln (\alpha)}$ for $x$ small, and
$\varphi(x)= {\ln (1-x)}/{\ln(\beta)}$  for $x$ close to $1$.  Observe that there exists a constant $K_0>0$ such that
$\varphi^{-1}(x)= \exp(\ln(\alpha)\, x )$ for $x<-K_0$ and  $\varphi^{-1}(x)= 1-\exp(\ln(\beta)\, x)$, for $x>K_0$.

For any $f\in D_{\alpha,\beta}$  the conjugated diffeomorphism $\theta_f=\varphi\circ f\circ \varphi^{-1}$
of $\RR$ satisfies $\theta_f(x)>x$ for all $x$; furthermore, $\theta_f(x)$ agrees with $x+1$
when $|x|$ is larger than a constant $K_f>K_0$.

The space $\RR/\theta_f$ of the orbits of $\theta_f$ is a smooth circle $S_f$ which has
two natural identifications with the (affine) circle $S^1=\RR/\ZZ$:
two points $x,y\in (-\infty,-K_f]$ (resp. $x,y\in [K_f,+\infty)$) are in the  same orbit for $\theta_f$
if and only if they differ by an integer. This leads to two diffeomorphisms
$\pi_+\colon S_f\to S^1$ and $\pi_-\colon S_f\to S^1$, respectively.
Let  $\De_{f,\varphi}=\pi_+\circ \pi_-^{-1}\colon S^1\to S^1$.

\begin{prop} \label{p.matherdef}
The diffeomorphism $f$ is the time one map of a $C^1$-vector field
if and only if $\De_{f,\varphi}$ is a rotation.
\end{prop}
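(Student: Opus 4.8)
The plan is to establish both directions by identifying $\De_{f,\varphi}$ with the obstruction to gluing a local flow near $0$ with a local flow near $1$.

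\emph{($\Leftarrow$) Suppose $\De_{f,\varphi}$ is a rotation.} First I would note that on the two ends of $\RR$ the map $\theta_f$ already coincides with the translation $x\mapsto x+1$, which is literally the time one map of the constant vector field $\partial/\partial x$. The goal is to produce a single $C^1$ vector field $Y$ on $\RR$, invariant under $\theta_f$, equal to $\partial/\partial x$ near $\pm\infty$, whose time one map is $\theta_f$; conjugating $Y$ back by $\varphi^{-1}$ and extending by $0$ at the fixed points $\{0,1\}$ then gives the desired vector field for $f$ on $[0,1]$ (one checks smoothness/$C^1$-ness at $0$ and $1$ using the affine form of $f$ there, exactly as in the standard construction of a linearizing flow near a hyperbolic fixed point). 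To build $Y$: pick the obvious complete vector field $Y_+$ near $+\infty$ generating the unit translation, push it down to the quotient circle $S_f$ via $\pi_+$ to get a vector field on $S^1$; similarly push $Y_-$ down via $\pi_-$. These two vector fields on $S^1$ differ by the diffeomorphism $\De_{f,\varphi}=\pi_+\circ\pi_-^{-1}$; since $\De_{f,\varphi}$ is a rotation, it carries the rotation-invariant vector field to itself, so in fact the two push-forwards \emph{agree}. Hence they define a single vector field $\bar Y$ on $S_f$, which lifts to a $\theta_f$-invariant vector field $Y$ on $\RR$. Because the time one map of $Y$ agrees with $\theta_f$ on both ends and both are $\theta_f$-equivariant lifts of the same (trivial, since $\bar Y$ is the generator of the unit rotation) circle flow, they coincide everywhere; so $\theta_f$ is the time one map of $Y$, as wanted.

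\emph{($\Rightarrow$) Suppose $f$ is the time one map of a $C^1$ vector field $X$ on $[0,1]$.} Then $0,1$ are hyperbolic fixed points of $X$ (with multipliers $\ln\alpha$, $\ln\beta$), and $\theta_f=\varphi\circ f\circ\varphi^{-1}$ is the time one map of $Y:=\varphi_*X$ on $\RR$. Near $+\infty$ the flow of $Y$ and the translation flow are two $C^1$ flows on a half-line whose time one maps both equal $x\mapsto x+1$; a flow with no fixed points on an interval is determined up to a (unique, orientation-preserving) time reparametrization by its generator, and having the same time one map as the translation forces the reparametrization to be trivial, so $Y=\partial/\partial x$ near $+\infty$, and likewise near $-\infty$. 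Now $Y$ descends to a $C^1$ vector field $\bar Y$ on the orbit circle $S_f$, and under \emph{both} identifications $\pi_\pm\colon S_f\to S^1$ it pushes forward to the same object, namely the unit-speed rotation-invariant vector field on $S^1=\RR/\ZZ$ (because near each end $Y$ is the standard translation). Therefore $\De_{f,\varphi}=\pi_+\circ\pi_-^{-1}$ conjugates this vector field to itself, i.e. commutes with the flow of the rotation-invariant field, hence $\De_{f,\varphi}$ is a rotation.

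\emph{Main obstacle.} The delicate point is not the circle-level bookkeeping but the \emph{regularity at the fixed points}: in the ($\Leftarrow$) direction one must verify that the vector field obtained on $(0,1)$ by conjugating $Y$ through $\varphi^{-1}$ actually extends $C^1$ (indeed $C^\infty$ in the relevant sense) across $0$ and $1$ and generates a flow whose time one map is genuinely $f$ there — this is where the hypothesis that $f$ is affine near the fixed points (so $\varphi$ is exactly logarithmic there and $Y$ is exactly $\partial/\partial x$, making the conjugated field exactly $\ln(\alpha)\,x\,\partial/\partial x$ near $0$) is used decisively. I would isolate this as a short lemma on the standard linearizing flow of a $C^1$-linearizable hyperbolic fixed point in dimension one, and quote Mather for the precise statement if available; everything else is a routine equivariance argument.
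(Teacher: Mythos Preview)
Your overall strategy matches the paper's: both directions hinge on comparing $Y=\varphi_*X$ with $\partial/\partial x$ near $\pm\infty$ and reading the invariant off the quotient circle. The $(\Leftarrow)$ direction is essentially the paper's argument, rephrased on $S_f$ rather than by direct pullback from $+\infty$ (minor quibble: it is the two \emph{pull-backs} of $\partial/\partial x$ to $S_f$ via $\pi_\pm$ that must be shown to agree, not the push-forwards to $S^1$, which are both tautologically $\partial/\partial x$).

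There is, however, a genuine gap in your $(\Rightarrow)$ direction. You assert that near $+\infty$ two fixed-point-free $C^1$ flows on a half-line with the same time one map $x\mapsto x+1$ must coincide; this is false. Any $1$-periodic nowhere-vanishing $C^1$ function $Y$ on $\RR$ with $\int_0^1 dx/Y=1$ generates a flow whose time one map is $x\mapsto x+1$, and such $Y$ need not be constant. What rescues the argument---and what the paper actually uses---is the $C^1$ regularity of $X$ \emph{at the fixed point}: since $X(1)=0$ and $X'(1)=\ln\beta$, the conjugated field $Y=\varphi_*X$ satisfies $Y(u)\to 1$ as $u\to+\infty$; combined with the $1$-periodicity forced by commutation with $\theta_f$, this yields $Y\equiv\partial/\partial x$ on a neighborhood of $+\infty$. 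In the paper's language: $X$ must agree with the linear fields $X^\pm$ near $0$ and $1$ because those are the unique $C^1$ germs at a hyperbolic zero whose time one map is the given affine map. You correctly flagged regularity at the endpoints as the delicate point, but only for $(\Leftarrow)$; it is equally essential---and for the same reason---in $(\Rightarrow)$.
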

\begin{proof} Note that $f\in D_{\alpha,\beta}$ agrees with the time one map of the vector field
$X^-= \ln(\alpha)\,x \, \frac\partial{\partial x}$ in a neighborhood
of $0$ and with $X^+= \ln(\beta)\, (x-1) \, \frac\partial{\partial
x}$ in a neighborhood of $1$. Furthermore, if $f$ is the time one
map of a $C^1$-vector field $X$ on $[0,1]$, then $X=X^-$ in a
neighborhood of $0$ and $X=X^+$ in a neighborhood of $1$. The
hypothesis on $\varphi$ implies
$\varphi_*(X_-)=\frac\partial{\partial x}$ on some interval
$(-\infty,L_-)$ and $\varphi_*(X_+)=\frac\partial{\partial x}$ on an
interval $(L_+,\infty)$.

Assume that $\De_{f,\varphi}$ is a rotation. Then we define a vector
field  $Y$ on $\RR$ as follows: consider $n>0$ such that
$\theta_f^n(x)> K_f$. Now let $Y(x)=\left(D_x\theta_f^n \right)^{-1}(\frac\partial{\partial x})$. This vector does not depend of $n$
(because $\theta_f$ is the translation $t\mapsto t+1$ for $t\geq K_f$).

\begin{clai} if $x<-K_f$ then $ Y(x)=\frac\partial{\partial x}$.
\end{clai}
\begin{proof} Consider the natural projection $\pi_f\colon \RR\to S_f$ that maps each point to its orbit for $\theta_f$.
Since $Y$ is invariant under $\theta_f$, the vector field $(\pi_f)_*(Y)$ is well-defined. Since on $(K_f,+\infty)$
the vector $Y(x)$ is equal to $\frac\partial{\partial x}$, the map $\pi_+\circ \pi_f$
agrees with the natural projection $\RR\to S^1$, and we have
$(\pi_+\circ\pi_f)_*(Y(x))=\frac\partial{\partial x}$.
As $\De_{f,\varphi}$ is a rotation, and as the rotations preserve the vector field $\frac\partial{\partial x}$,
we obtain that
$(\pi_-\circ\pi_f)_*(Y(x))=(\De_{f,\varphi}^{-1}\circ \pi_+\circ\pi_f)_*(Y(x))=\frac\partial{\partial x}$.
As $\theta_f$ agrees with the translation $t\mapsto t+1$ on $(-\infty,-K_f]$, the projection $\pi_-\circ \pi_f$
agrees on $(-\infty,-K_f]$ with the natural projection $\RR\mapsto\RR/\ZZ$.
Hence $(\pi_-\circ\pi_f)_*(Y(x))=\frac\partial{\partial x}$ implies $Y(x)=\frac\partial{\partial x}$.
\end{proof}

Notice that, by construction, the vector field $Y$ is invariant
under $\theta_f$; furthermore, $\theta_f$ is the time one map of
$Y$: this is true on  a neighborhood of $\pm\infty$, and extends on
$\RR$ because $Y$ is $\theta_f$-invariant.

Now,  the vector field $X=\varphi^{-1}_*(Y)$, defined on $(0,1)$, agrees with $X_-$ and $X_+$ in a neighborhood
of $0$ and $1$, respectively, hence induces a smooth vector field on $[0,1]$.
Finally, $f$ is the time one map of $X$.

Conversely, if $f$ is the time one map of a $C^1$-vector field $X$ on $[0,1]$ then $\theta_f$ is the time one map
of  the vector field $Y=\varphi_*(X)$, which agrees with $\partial/\partial x$ in the neighborhood
of $\pm\infty$ (because $X$ agrees with $X_-$ and $X_+$ in a neighborhood of $0$ and $1$, respectively).
Hence the projections $(\pi_-\circ\pi_f)_*(Y)$ and $(\pi_+\circ\pi_f )_*(Y)$ are both equal
to the vector field $\partial/\partial x$ on $S^1$. This implies that
$(\De_{f,\varphi})_* (\partial/\partial x)=\partial/\partial x$, which
implies that $\De_{f,\varphi}$ is a rotation.
\end{proof}

\begin{rema}
The function $\De_{f,\varphi}$ defined here seems to depend on the choice of $\varphi$.
There is a more intrinsic way to define the diffeomorphism  $\De_{f,\varphi}$ ``up to composition
with rotations:''

The vector fields $X_-$ and $X_+$ defined in a neighborhood of $0$ and $1$, respectively,
are the unique vector fields such that  $f$ is the time one map of the corresponding flows,
in the neighborhood of $0$ and $1$, respectively. Each of these vector fields induces
a parametrization of the orbit space $(0,1)/f=S_f$, that is, up to the choice of an origin,
a diffeomorphism $\pi_f^\pm\colon S_f\to S^1$. The change of parametrization $\pi_f^+\circ (\pi_f^-)^{-1}$
is well-defined, up to the choice of an origin of the circle, i.e. up to composition,
at the right and at the left, by rotations. This class of maps is
called the {\em Mather invariant} of $f$.
\end{rema}

\subsection{Forcing the Mather invariant to vanish}\label{ss.contre-exemple}
In this subsection we prove the following result.
\begin{prop}\label{p.contre-exemple} Let $f$ be a diffeomorphism in $D_{\alpha,\beta}$.
Then any $C^1$-neighborhood $\cU$ of $f$ in $\Diff^1([0,1])$
contains a diffeomorphism $g$ such that $g=f$ in a neighborhood of
$\{0,1\}$ and $g$ is the time one map of a $C^\infty$-vector field
on $[0,1]$.
\end{prop}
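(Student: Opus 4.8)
The plan is to show that, after a small $C^1$-perturbation of $f$ supported away from $\{0,1\}$, we can make the Mather invariant $\De_{f,\varphi}$ a rotation; then Proposition~\ref{p.matherdef} immediately gives that the perturbed map is the time one map of a $C^\infty$-vector field on $[0,1]$. So the whole problem reduces to understanding how $\De_{f,\varphi}$ changes when we compose $f$ with a diffeomorphism supported in a compact subinterval of $(0,1)$.

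First I would recall that $\De_{f,\varphi}=\pi_+\circ\pi_-^{-1}$ is built from the two identifications of the orbit space $S_f=\RR/\theta_f$ with $S^1$, coming from the behaviour of $\theta_f=\varphi\circ f\circ\varphi^{-1}$ near $-\infty$ and near $+\infty$. The key observation is that if $h$ is a $C^\infty$-diffeomorphism of $[0,1]$ equal to the identity near $\{0,1\}$, then $g=h\circ f$ still lies in $D_{\alpha,\beta}$ (it has the same affine behaviour near the fixed points), and $\theta_g=\psi\circ\theta_f$ where $\psi=\varphi\circ h\circ\varphi^{-1}$ is a compactly supported diffeomorphism of $\RR$. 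One then tracks how such a left composition alters the two projections $\pi_\pm$: conjugating the picture, I would show that $\De_{g,\varphi}$ differs from $\De_{f,\varphi}$ by precomposition (on one side) with the circle diffeomorphism induced by $\psi$ read through one of the identifications. Concretely: because $\psi$ is supported in a compact set, for $x$ very negative $\theta_g$ still agrees with $x\mapsto x+1$, so $\pi_-$ is unchanged; the effect of $\psi$ is entirely felt on the $+\infty$ side, and passing $\psi$ through to that side realizes it, after conjugating by the flow of $\theta_f$ (equivalently by $\theta_f^n$ for large $n$), as an \emph{arbitrary} $C^\infty$-diffeomorphism of $S^1$ isotopic to the identity. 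In other words, by choosing $h$ appropriately we can post-compose $\De_{f,\varphi}$ with any orientation-preserving $C^\infty$-diffeomorphism of $S^1$ we like.

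Having established that, the conclusion is easy: pick any rotation $R$ of $S^1$ and let $\rho=R\circ\De_{f,\varphi}^{-1}$, which is an orientation-preserving $C^\infty$-diffeomorphism of $S^1$ isotopic to the identity; choose $h$ so that the induced modification of the Mather invariant is exactly post-composition by $\rho$, so that $\De_{g,\varphi}=\rho\circ\De_{f,\varphi}=R$ is a rotation. By Proposition~\ref{p.matherdef}, $g$ is then the time one map of a $C^\infty$-vector field on $[0,1]$, and by construction $g=f$ near $\{0,1\}$. The only remaining point is the quantitative one: we need $g$ to be $C^1$-close to $f$, i.e.\ $h$ (equivalently $\psi$) to be $C^1$-small. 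This is where the structure of the construction is used: the modification of $\De_{f,\varphi}$ takes place on the $+\infty$ side, where one has the freedom to spread the correction $\rho$ over a long orbit-segment of $\theta_f$; conjugating a fixed diffeomorphism $\rho$ of $S^1$ by $\theta_f^n$ (which near $+\infty$ is translation by $n$, but whose derivative along a full fundamental domain of the return map accumulates the factor $\beta<1$ repeatedly) contracts it, so that for $n$ large the resulting compactly supported perturbation of $\RR$ — hence of $[0,1]$ after applying $\varphi^{-1}$ — is as $C^1$-small as we wish.

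The main obstacle is precisely this last step: making the perturbation $C^1$-small while realizing a prescribed, non-small change of the Mather invariant. One must be careful that pulling back by $\varphi^{-1}$ (which is strongly contracting towards the fixed points) does not blow up the $C^1$-size, and that the support of $h$ can indeed be taken in a region where $f$ is strongly attracting/repelling so that the relevant derivatives are controlled; the choice of the integer $n$ (how far out along the $+\infty$ orbit to place the correction) has to be made after the target $C^1$-neighbourhood $\cU$ is fixed. Everything else — that $g\in D_{\alpha,\beta}$, that left composition changes only one of the two identifications, that a rotation is attainable as $R\circ\De_{f,\varphi}^{-1}$ composed back — is routine once the dependence of $\De_{\cdot,\varphi}$ on left composition is written down carefully.
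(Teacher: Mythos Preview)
Your overall strategy --- perturb $f$ by a compactly supported $h$ so as to kill the Mather invariant, then invoke Proposition~\ref{p.matherdef} --- is exactly right, and your description of how such a perturbation changes $\De_{f,\varphi}$ is essentially correct. The gap is in the quantitative step: your mechanism for making the perturbation $C^1$-small does not work. You claim that conjugating the fixed correction $\rho$ by $\theta_f^n$ ``contracts it'' because of the accumulated factor $\beta<1$; but on the $+\infty$ side $\theta_f$ is exactly $t\mapsto t+1$, so conjugation by $\theta_f^n$ merely translates the support of $\tilde\psi$ by $n$ and leaves $\sup|D\tilde\psi-1|$ unchanged. Pulling back to $[0,1]$ via $\varphi^{-1}$ does shrink the support (this is where $\beta^n$ enters), but it is a conjugation by a homothety, and a homothety conjugation preserves the $C^1$-distance to the identity: if $H(x)=1+\beta^n(x-1)$ then $D(H\circ h\circ H^{-1})(x)=Dh(H^{-1}(x))$. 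This is precisely the content of the paper's Lemma~\ref{l.voisinages2}, which shows that $\sup_x|D_x\Theta_{a+n}(\psi)-1|=\sup_x|D_x\Theta_a(\psi)-1|$ for all $n$. So a \emph{single} lift of $\rho=R\circ\De_{f,\varphi}^{-1}$, placed however far out, has $C^1$-size comparable to that of $\rho$ itself, which can be large.

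The missing ingredient is the fragmentation lemma (Theorem~\ref{t.decomposition}): one writes $\De_{f,\varphi}^{-1}$ as a long composition $\psi_\ell\circ\cdots\circ\psi_1$ of circle diffeomorphisms each $C^1$-close to the identity and each supported in a short arc, and then lifts each $\psi_i$ to a diffeomorphism $h_i$ supported in a separate fundamental domain $(a_i,a_i+1)$. Because the supports of the $h_i$ are pairwise disjoint, the $C^1$-size of $h_1\circ\cdots\circ h_\ell$ is the maximum (not the sum) of the sizes of the $h_i$, and by Lemma~\ref{l.voisinages2} each of these is controlled by $\sup|D\psi_i-1|$, which the fragmentation lemma makes as small as we want. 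Your phrase ``spread the correction $\rho$ over a long orbit-segment'' gestures toward this, but the concrete mechanism you wrote down (a single conjugated lift) does not realize it; the decomposition into many small pieces is the essential idea you are missing.
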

We retain the notation from the previous subsection.
Fix $f\in D_{\alpha,\beta}$ and $K_f>0$ such that $\theta_f=\varphi \circ f\circ \varphi^{-1}$
agrees with $x\mapsto x+1$ on $(-\infty,-K_f]\cup[K_f,+\infty)$.

Given a diffeomorphism $h\colon \RR\to\RR$, the \emph{support of $h$}, denoted by $supp(h)$
is the closure  of the set of points $x$ such that $h(x)\neq x$.
\begin{lemm} \label{l.composition} Consider a number $a>K_f$ and a diffeomorphism
$\tilde \psi\colon\RR\to\RR$  whose support is contained in $(a,a+1)$.
Let $h$ denote the  diffeomorphism $\varphi^{-1}\circ \tilde
\psi\circ\varphi$, and let
$\psi$ denote the diffeomorphism of $S^1\simeq [a,a+1]/a\sim a+1$ induced by $\tilde \psi$.

Then the diffeomorphism $g= f\circ h$ belongs to $D_{\alpha,\beta}$, and
$\De_{g,\varphi}=\psi \circ\De_{f,\varphi}$.
\end{lemm}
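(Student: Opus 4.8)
The statement concerns how composing $f$ with a diffeomorphism $h$ supported near one fundamental domain affects the Mather invariant. I would organize the proof in two parts: first, verify that $g = f \circ h$ still lies in $D_{\alpha,\beta}$; second, compute $\De_{g,\varphi}$ and show it equals $\psi \circ \De_{f,\varphi}$.

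\emph{First part.} Since $\tilde\psi$ is supported in $(a,a+1)$ with $a > K_f > K_0$, the diffeomorphism $h = \varphi^{-1}\circ\tilde\psi\circ\varphi$ of $(0,1)$ is the identity in a neighborhood of $0$ and of $1$ — specifically on $\varphi^{-1}((-\infty, a)) \cup \varphi^{-1}((a+1, +\infty))$, which contains neighborhoods of both endpoints. Extending $h$ by the identity at $\{0,1\}$ gives a $C^\infty$ diffeomorphism of $[0,1]$. Hence $g = f\circ h$ agrees with $f$, and therefore with the affine maps $x\mapsto \alpha x$ and $x\mapsto 1 + \beta(x-1)$, near $0$ and $1$ respectively. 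The fixed point set is unchanged: $h$ fixes $\{0,1\}$ and $h(x) = x$ near the endpoints, while in the interior one must check $g(x) > x$; since $h(x) = \varphi^{-1}(\tilde\psi(\varphi(x)))$ with $\tilde\psi$ orientation-preserving and $\varphi^{-1}$ increasing, the order relation to track is that $h$ maps $(0,1)$ to $(0,1)$ bijectively preserving orientation, and one argues that $f\circ h(x) > x$ using that $f$ moves every interior point forward by "at least one step" in the appropriate sense — more cleanly, observe that $\theta_g = \varphi\circ g\circ\varphi^{-1} = \theta_f\circ\tilde\psi$, and since $\tilde\psi$ is a diffeomorphism of $\RR$ commuting with nothing in particular but equal to the identity outside $(a,a+1)$, while $\theta_f(x) > x$ everywhere and $\theta_f$ agrees with $x\mapsto x+1$ outside $(-K_f, K_f)$, one checks directly that $\theta_g(x) > x$ for all $x$ (on the support of $\tilde\psi$ this uses $\theta_f(y) > y \geq$ the relevant bound). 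So $g \in D_{\alpha,\beta}$ with the \emph{same} $\alpha, \beta$.

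\emph{Second part.} The key identity is $\theta_g = \theta_f \circ \tilde\psi$. Now I track the two projections $\pi_\pm$ from the orbit space. On $(-\infty, -K_f]$ the map $\tilde\psi$ is the identity, so $\theta_g$ agrees with $x\mapsto x+1$ there, and the identification of orbits near $-\infty$ with $\RR/\ZZ$ is unchanged: $\pi_-^g = \pi_-^f$ after the natural identification. On $[a+1, +\infty)$ — which lies beyond the support of $\tilde\psi$ — $\theta_g$ again agrees with $x\mapsto x+1$, and this is where $\pi_+$ is read off. The point is that passing from $\theta_f$ to $\theta_g = \theta_f\circ\tilde\psi$ modifies the orbit equivalence relation only across the single fundamental domain $[a, a+1]$; the effect on the "holonomy from $-\infty$ to $+\infty$" is precisely pre-composition (or post-composition, depending on orientation conventions) by the circle diffeomorphism $\psi$ induced by $\tilde\psi$ on $[a,a+1]/(a\sim a+1)$. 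I would make this precise by choosing a base point just below $a$, following its $\theta_g$-orbit: from $-\infty$ up to near $a$ the dynamics is unchanged (that's the $\pi_-$ part), then one application of $\theta_g = \theta_f\circ\tilde\psi$ crosses the modified domain contributing the factor $\psi$, and then from $a+1$ onward the dynamics is again the translation, landing us in the $\pi_+$ chart. Composing, $\De_{g,\varphi} = \pi_+^g\circ(\pi_-^g)^{-1} = \psi\circ\pi_+^f\circ(\pi_-^f)^{-1} = \psi\circ\De_{f,\varphi}$.

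\emph{Main obstacle.} The routine parts are checking $g\in D_{\alpha,\beta}$ and the smoothness; the delicate point is bookkeeping the orientation conventions so that the correction appears as $\psi\circ\De_{f,\varphi}$ on the left rather than on the right, and identifying the induced circle map correctly as $\psi$ (not $\psi^{-1}$ or a conjugate). I expect to handle this by working entirely on the level of the orbit projections $\pi_f, \pi_g \colon \RR \to S_f, S_g$ and the identity $\pi_g = \pi_f$ restricted away from the modified fundamental domain, comparing how a single fundamental-domain transition is altered — the cleanest phrasing being that $\tilde\psi$, viewed as a deck-transformation-like modification, descends to exactly the map $\psi$ on the quotient circle, and this quotient circle is canonically one of the two copies of $S^1$ appearing in the definition of $\De$.
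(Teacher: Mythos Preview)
Your proposal is correct and follows essentially the same approach as the paper: both use the identity $\theta_g = \theta_f \circ \tilde\psi$, observe that the $\theta_f$- and $\theta_g$-orbits coincide until they enter $[a,a+1)$, differ by one application of $\tilde\psi$ across that fundamental domain, and then coincide again as translations on $[a+1,\infty)$, so that the transition map picks up exactly the factor $\psi$ on the left. The paper's version is phrased more concretely---it fixes $x<-a$, iterates until $\theta_f^n(x)\in[a,a+1)$, and reads off the projections mod $\ZZ$ directly---whereas you frame things via identifying $\pi_\pm^g$ with $\pi_\pm^f$; but the substance is the same, and your extra care in verifying $\theta_g(x)>x$ on the support of $\tilde\psi$ (which the paper glosses over) is a welcome addition.
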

\begin{proof} The diffeomorphism $g$ agrees with $f$ in neighborhoods of $0$ and $1$, which  proves that
$g\in D_{\alpha,\beta}$. Furthermore, by construction, one may choose $K_g=a+1$.

For $x<-a$, there is a (unique) integer such that $\theta_f^n(x)=\theta_g^n(x)\in[a,a+1)$,
and by construction of $\De_{f,\varphi}$,  the projection of $\theta_f^n(x)$ on $S^1$ is $\De_{f,\varphi(x)}$.
Now the projection on $S^1$ of $\theta_g^{n+1}(x)=\theta_f\circ \tilde \psi\circ\theta_f^n(x)$
is $\psi\circ\De_{f,\varphi}(x)$, by construction.
As $\theta_g=\theta_f=y\mapsto y+1$ for $y\geq a+1$, one gets that the projection on $S^1$ of $\theta_g^{n+k}(x)$
is $\psi\circ\De_{f,\varphi}(x)$, for all $k>0$; hence $\De_{g,\varphi}=\psi\circ\De_{f,\varphi}$.
\end{proof}

Iterating the process described in Lemma~\ref{l.composition}, we obtain:

\begin{coro}\label{c.composition} Consider a finite sequence of numbers $a_i>K_f$, $i\in\{1,\dots,\ell\}$,
such that $a_{i+1}>a_i+1$ for all $i\in\{1,\dots,\ell-1\}$. For every $i\in\{1,\dots,\ell\}$, fix
a diffeomorphism $\tilde \psi_i\colon\RR\to\RR$ whose support is contained in $(a_i,a_i+1)$.
Let $h_i$ denote the diffeomorphism $\varphi^{-1}\circ \tilde \psi_i\circ\varphi$,
and let $\psi_i$ denote the diffeomorphism of $S^1$ induced by $\tilde \psi_i$.
(Note that the diffeomorphisms $h_i$ have disjoint support, so that they are pairwise commuting.)

Then the diffeomorphism $g= f\circ h_1\circ h_2\circ \cdots \circ h_\ell $ belongs to $D_{\alpha,\beta}$,
and we have: $$\De_{g,\varphi}=\psi_\ell\circ\cdots\circ\psi_1 \circ\De_{f,\varphi}.$$
\end{coro}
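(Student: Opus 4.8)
The plan is to prove Corollary~\ref{c.composition} by a straightforward induction on $\ell$, applying Lemma~\ref{l.composition} once at each step. First I would record the base case $\ell=1$: this is precisely Lemma~\ref{l.composition} (with $a=a_1$), which gives $g=f\circ h_1\in D_{\alpha,\beta}$ and $\De_{g,\varphi}=\psi_1\circ\De_{f,\varphi}$. For the inductive step, I would set $g_{\ell-1}=f\circ h_1\circ\cdots\circ h_{\ell-1}$ and assume $g_{\ell-1}\in D_{\alpha,\beta}$ with $\De_{g_{\ell-1},\varphi}=\psi_{\ell-1}\circ\cdots\circ\psi_1\circ\De_{f,\varphi}$, then write $g=g_{\ell-1}\circ h_\ell$ and invoke Lemma~\ref{l.composition} with $f$ replaced by $g_{\ell-1}$ and $a$ replaced by $a_\ell$. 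Composing gives $\De_{g,\varphi}=\psi_\ell\circ\De_{g_{\ell-1},\varphi}=\psi_\ell\circ\psi_{\ell-1}\circ\cdots\circ\psi_1\circ\De_{f,\varphi}$, as desired.

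The one point requiring care — and the main (mild) obstacle — is checking that the hypotheses of Lemma~\ref{l.composition} are genuinely satisfied when it is reapplied to $g_{\ell-1}$ in place of $f$. The lemma asks for a constant $K_{g_{\ell-1}}$ such that $\theta_{g_{\ell-1}}$ agrees with $x\mapsto x+1$ outside $[-K_{g_{\ell-1}},K_{g_{\ell-1}}]$, and it requires $a_\ell>K_{g_{\ell-1}}$. By the proof of Lemma~\ref{l.composition} one may take $K_{g_{\ell-1}}=a_{\ell-1}+1$ (the last bump is supported in $(a_{\ell-1},a_{\ell-1}+1)$, and below $-(a_{\ell-1}+1)$ nothing has changed), and the standing hypothesis $a_\ell>a_{\ell-1}+1$ is exactly what guarantees $a_\ell>K_{g_{\ell-1}}$. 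I would also note explicitly that since $\operatorname{supp}(h_i)\subset \varphi^{-1}((a_i,a_i+1))$ and the intervals $(a_i,a_i+1)$ are pairwise disjoint (again because $a_{i+1}>a_i+1$), the $h_i$ have pairwise disjoint supports and hence commute, so the product $h_1\circ\cdots\circ h_\ell$ is unambiguous and its order is irrelevant; this is the parenthetical remark in the statement.

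Beyond that, no new ideas are needed: the corollary is purely a matter of iterating the one-step computation, and the bookkeeping of the constants $K_{g_i}=a_i+1$ propagates automatically through the induction. I would keep the write-up short, essentially one paragraph verifying the constant-tracking and one line for the telescoping composition of the $\psi_i$.
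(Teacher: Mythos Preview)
Your proposal is correct and is exactly the approach the paper takes: the paper simply says ``Iterating the process described in Lemma~\ref{l.composition}, we obtain'' and states the corollary, and your induction with the constant-tracking $K_{g_{\ell-1}}=a_{\ell-1}+1$ (which is precisely the choice made in the proof of Lemma~\ref{l.composition}) is the natural way to make that one-line remark rigorous. There is nothing to add or correct.
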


\begin{defi}\label{d.Theta}
Let $a\in\RR$, and let $\bar a$ be its projection on $S^1=\RR/\ZZ$. Given a diffeomorphism $\psi\colon S^1\to S^1$
with support in $S^1\setminus\{\bar a\}$ we call \emph{the lift of $\psi$  in $(a,a+1)$}
the diffeomorphism $\tilde \psi_a\colon\RR\to \RR$ with support in $(a,a+1)$ such that for any
$x\in(a,a+1)$ the image $\psi_a(x)$ is the point of $(a,a+1)$ which projects to $\psi(\bar x)$
where $\bar x$ is the projection of $x$.

We denote by $\Theta_a(\psi)$ the diffeomorphism of $[0,1]$ whose expression in $(0,1)$
is $\Theta_a(\psi)=\varphi^{-1}\circ \psi_a\circ\varphi$.
\end{defi}

\begin{lemm}\label{l.voisinages} For any $C^1$-neighborhood $\cU$ of $f$ there is a neighborhood $\cV$
of $Id_{S^1}\in\Diff^1(S^1)$ with the following property:

Given any finite sequence $a_i>K_f$, $i\in\{1,\dots,\ell\}$, such that $a_{i+1}>a_i+1$
for all $i\in\{1,\dots,\ell-1\}$, we denote by $\bar a_i$ the projection of $a_i$ on $S^1$.
For any~$i$, let $\psi_i\in\cV$ be a diffeomorphism of $S^1$ with support in $S^1\setminus\{a_i\}$.
Then the diffeomorphism $g= f\circ \Theta_{a_1}(\psi_1)\circ \cdots \Theta_{a_\ell}(\psi_\ell)$
belongs to $\cU$.
\end{lemm}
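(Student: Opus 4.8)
The plan is to reduce the statement to a single uniform $C^1$-smallness estimate. Write $h:=\Theta_{a_1}(\psi_1)\circ\cdots\circ\Theta_{a_\ell}(\psi_\ell)$, so that $g=f\circ h$. Left composition by the fixed $C^1$-diffeomorphism $f$ is continuous for the $C^1$-topology on $[0,1]$ (one uses here that $f'$ is uniformly continuous on the compact interval to control $(f\circ h)'=(f'\circ h)\cdot h'$), so there is a $\delta>0$, depending only on $\cU$ and $f$, such that $d_{C^1}(h,\mathrm{Id}_{[0,1]})<\delta$ already forces $f\circ h\in\cU$. It therefore suffices to exhibit a $C^1$-neighborhood $\cV$ of $\mathrm{Id}_{S^1}$ for which $d_{C^1}(h,\mathrm{Id})<\delta$ for every admissible choice of $\ell$, of the $a_i$, and of the $\psi_i\in\cV$; the point is that $\cV$ must be allowed to depend on neither $\ell$ nor the $a_i$.

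First I would estimate a single factor $\Theta_a(\psi)=\varphi^{-1}\circ\tilde\psi_a\circ\varphi$, for $a>K_f$ and a diffeomorphism $\psi$ with $d_{C^1}(\psi,\mathrm{Id}_{S^1})<\varepsilon$. Outside the compact set $\varphi^{-1}\big((a,a+1)\big)\subset(0,1)$ this map is the identity, so only that set matters; there $t:=\varphi(x)$ and $\tilde\psi_a(t)$ both lie in $(a,a+1)\subset(K_0,+\infty)$, where by construction $\varphi^{-1}(s)=1-\exp(\ln(\beta)\,s)$ and hence $(\varphi^{-1})'(s)=|\ln\beta|\,\beta^{\,s}$. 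Since $\tilde\psi_a$ differs from the identity on $(a,a+1)$ by the same ($1$-periodic) displacement by which $\psi$ differs from $\mathrm{Id}$ on $S^1$, we have $|\tilde\psi_a(t)-t|<\varepsilon$ and $|\tilde\psi_a'(t)-1|<\varepsilon$, and a direct computation gives
$$\Theta_a(\psi)(x)-x=\varphi^{-1}\big(\tilde\psi_a(t)\big)-\varphi^{-1}(t),\qquad \big(\Theta_a(\psi)\big)'(x)=\frac{(\varphi^{-1})'\big(\tilde\psi_a(t)\big)}{(\varphi^{-1})'(t)}\;\tilde\psi_a'(t)=\beta^{\,\tilde\psi_a(t)-t}\,\tilde\psi_a'(t).$$
Bounding $(\varphi^{-1})'$ on $(a,a+1)$ by $|\ln\beta|\,\beta^{K_f}$, the first identity yields $|\Theta_a(\psi)(x)-x|\le|\ln\beta|\,\beta^{K_f}\,\varepsilon$; since $\beta^{\,\tilde\psi_a(t)-t}$ lies between $\beta^{\varepsilon}$ and $\beta^{-\varepsilon}$, the second yields that $\big|\big(\Theta_a(\psi)\big)'(x)-1\big|$ is bounded by a quantity that tends to $0$ as $\varepsilon\to0$. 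The essential point is that both bounds are \emph{independent of $a$}: this is exactly where the explicit form of $\varphi^{-1}$ near $+\infty$ is used, for the distortion ratio $(\varphi^{-1})'(\tilde\psi_a(t))/(\varphi^{-1})'(t)$ depends only on the displacement $\tilde\psi_a(t)-t$ and not on the location $a$. Consequently $\sup_{a>K_f}d_{C^1}\big(\Theta_a(\psi),\mathrm{Id}\big)\to0$ as $\varepsilon\to0$.

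Finally I would assemble the factors. Since $a_{i+1}>a_i+1$, the sets $\varphi^{-1}\big((a_i,a_i+1)\big)$ are pairwise disjoint, and each $\Theta_{a_i}(\psi_i)$ carries its own support into itself; hence $h$ coincides with $\Theta_{a_i}(\psi_i)$ on $\varphi^{-1}\big((a_i,a_i+1)\big)$ and with the identity elsewhere, so that $d_{C^1}(h,\mathrm{Id})=\max_{1\le i\le\ell}d_{C^1}\big(\Theta_{a_i}(\psi_i),\mathrm{Id}\big)$. Choosing $\varepsilon$ — that is, $\cV$ — small enough that the estimate above gives $d_{C^1}(\Theta_a(\psi),\mathrm{Id})<\delta$ for every $a>K_f$ and every $\psi\in\cV$, we get $d_{C^1}(h,\mathrm{Id})<\delta$, and therefore $g=f\circ h\in\cU$, for every admissible sequence. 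I expect the only genuinely delicate point to be the uniformity in $a$ in the previous paragraph; the remaining steps merely package standard continuity facts about composition.
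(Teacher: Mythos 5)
Your proof is correct and follows essentially the same route as the paper: reduce to showing that each factor $\Theta_a(\psi)$ is uniformly $C^1$-small (independently of $a$), use disjointness of supports to pass to the product $h$, and use continuity of left composition by $f$. The one place where you differ from the paper is in establishing the uniformity in $a$: you compute the distortion ratio $(\varphi^{-1})'(\tilde\psi_a(t))/(\varphi^{-1})'(t)=\beta^{\tilde\psi_a(t)-t}$ explicitly from the exponential form of $\varphi^{-1}$ near $+\infty$, whereas the paper observes that $\Theta_{a+n}(\psi)$ is obtained from $\Theta_a(\psi)$ by conjugation with the homothety of ratio $\beta^n$ (which preserves $\sup_x|D_x\Theta-1|$), thereby reducing to the compact range $a\in[K_f,K_f+1]$. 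These are two presentations of the same fact; your computation makes the mechanism transparent, the paper's conjugacy argument is slightly more economical. One small slip: $\varphi^{-1}\big((a,a+1)\big)$ is an open interval in $(0,1)$, not compact; what you want is its closure, which does contain the support of $\Theta_a(\psi)$. This does not affect the argument.
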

\begin{proof} We fix a neighborhood $\cU_0$ if the identity map of $[0,1]$ such that,
if $g_1,\dots,g_n\in\cU_0$ and if the support of the $g_i$ are pairwise disjoint, then
$ f\circ g_1\circ h_2\circ \cdots g_n $ belongs to $\cU$.  Now the lemma is a direct consequence
of Lemma~\ref{l.voisinages2} below.
\end{proof}
\begin{lemm}\label{l.voisinages2}
For any $C^1$-neighborhood $\cU_0$ of $f$ there is a neighborhood $\cV$ of $Id_{S^1}\in\Diff^1(S^1)$
with the following property:

Consider any  $a>K_f$, its  projection $\bar a$ on $S^1$ and any diffeomorphism  $\psi\in\cV$
with support in $S^1\setminus\{a\}$. Then the diffeomorphism $\Theta_{a}(\psi)$ belongs to $\cU_0$.
\end{lemm}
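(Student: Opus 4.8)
The plan is to track how the $C^1$-size of $\Theta_a(\psi) = \varphi^{-1}\circ\psi_a\circ\varphi$ depends on the $C^1$-size of $\psi$, using the fact that $\varphi$ is \emph{affine-like} at infinity. Recall that $\varphi^{-1}(x) = 1 - \exp(\ln(\beta)\,x)$ for $x > K_0$, so on the region where the action takes place (namely $x > a > K_f > K_0$) the conjugating map $\varphi$ is, up to the fixed affine change $x\mapsto \ln(1-x)/\ln(\beta)$, essentially a logarithm. The key point is that $\Theta_a(\psi)$ equals the identity outside $\varphi^{-1}((a,a+1))$, a small interval accumulating on $1$, and on that interval we must bound $|\Theta_a(\psi)(y) - y|$ and $|D_y\Theta_a(\psi) - 1|$.

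First I would reduce to a uniform estimate independent of $a$. Write $\tau_a\colon (0,1)\to(0,1)$, $\tau_a = \varphi^{-1}\circ\psi_a\circ\varphi$, restricted to $J_a := \varphi^{-1}((a,a+1))$. On $J_a$ the map $\psi_a$ is conjugate to the $S^1$-diffeomorphism $\psi$ by the translation/projection identifying $(a,a+1)$ with $\RR/\ZZ$; so on the logarithmic coordinate $t = \varphi(y) - a \in (0,1)$ we have $\psi_a$ acting exactly as the lift of $\psi$ fixing $0$. Thus $\Theta_a(\psi)(y) = \varphi^{-1}\big(a + \widehat\psi(\varphi(y)-a)\big)$, where $\widehat\psi$ is the lift of $\psi$ with $\widehat\psi(0)=0$, and $\|\widehat\psi - \mathrm{id}\|_{C^1} \to 0$ as $\psi\to \mathrm{id}_{S^1}$ in $\Diff^1(S^1)$. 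For the $C^0$-estimate, using $\varphi^{-1}(x) = 1 - \beta^{x}$ (writing $\beta^x := \exp(\ln(\beta)x)$, with $\ln\beta<0$), one computes
\[
\Theta_a(\psi)(y) - y = \big(1 - \beta^{a + \widehat\psi(\varphi(y)-a)}\big) - \big(1 - \beta^{a + (\varphi(y)-a)}\big) = \beta^{\varphi(y)}\big(1 - \beta^{\widehat\psi(t)-t}\big),
\]
and since $|\varphi(y)| > a > K_f$ on $J_a$, the factor $\beta^{\varphi(y)}$ is bounded by $\beta^{K_f}$, while $|1 - \beta^{\widehat\psi(t)-t}|$ is controlled by $\|\widehat\psi-\mathrm{id}\|_{C^0}$ uniformly in $t\in(0,1)$. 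So the $C^0$-distance of $\Theta_a(\psi)$ to the identity is at most a fixed constant times $\|\psi-\mathrm{id}_{S^1}\|_{C^0}$, uniformly in $a$.

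For the $C^1$-estimate, differentiate: $D_y\Theta_a(\psi) = (\varphi^{-1})'\big(a+\widehat\psi(t)\big)\cdot \widehat\psi'(t)\cdot \varphi'(y)$, where $t = \varphi(y)-a$ and $y\in J_a$. Since $\Theta_a(\psi) = \mathrm{id}$ when $\psi = \mathrm{id}$, we also have $(\varphi^{-1})'(a+t)\,\varphi'(y) = 1$; hence
\[
D_y\Theta_a(\psi) - 1 = \frac{(\varphi^{-1})'(a+\widehat\psi(t))}{(\varphi^{-1})'(a+t)}\,\widehat\psi'(t) - 1.
\]
The ratio $(\varphi^{-1})'(a+\widehat\psi(t))/(\varphi^{-1})'(a+t) = \beta^{\widehat\psi(t)-t}$, which tends to $1$ uniformly as $\|\widehat\psi-\mathrm{id}\|_{C^0}\to 0$ (again uniformly in $a$, since the exponent $\widehat\psi(t)-t$ lies in a fixed compact neighborhood of $0$ depending only on $\|\psi-\mathrm{id}\|_{C^0}$, not on $a$). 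Combined with $\widehat\psi'(t)\to 1$ from $\|\psi-\mathrm{id}\|_{C^1}\to 0$, we get $\|D\Theta_a(\psi) - 1\|_{C^0} \to 0$ uniformly in $a$. Choosing $\cV$ to make both estimates small enough forces $\Theta_a(\psi)\in\cU_0$; this is the required neighborhood.

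The main obstacle is the \emph{uniformity in $a$}: a priori the logarithmic conjugation could blow up derivatives as $a\to\infty$ and $J_a$ shrinks toward $\{1\}$. The resolution is the exponential exactness of $\varphi^{-1}$ near $1$: the apparent blow-up of $\varphi'$ on $J_a$ is exactly cancelled by the decay of $(\varphi^{-1})'$, and what survives is only the \emph{ratio} $\beta^{\widehat\psi(t)-t}$, which depends on $\psi$ alone. A minor technical point is that $\varphi$ is only assumed to have the exact logarithmic form for $x$ \emph{close to} $1$ (equivalently $x > K_0$), but since all the $a_i > K_f > K_0$, we are always in that regime, so no error terms from the "transition zone" of $\varphi$ enter. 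One should also check the support condition $\mathrm{supp}(\psi)\subset S^1\setminus\{\bar a\}$ is used precisely to guarantee $\widehat\psi(0)=0$, so that $\Theta_a(\psi)$ genuinely fixes the endpoints of $J_a$ and extends by the identity to a $C^\infty$ diffeomorphism of $[0,1]$.
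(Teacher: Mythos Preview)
Your proof is correct. Both you and the paper exploit the exact exponential form $\varphi^{-1}(x)=1-\beta^{x}$ on $(K_0,\infty)$ to obtain uniformity in $a$, but the arguments are organized differently. The paper observes that for any integer $n$ the maps $\Theta_{a+n}(\psi)$ and $\Theta_a(\psi)$ are conjugate by the homothety of ratio $\beta^{n}$, which leaves $\sup_x|D_xh-1|$ invariant; this reduces the problem to $a\in[K_f,K_f+1]$, a compact range on which the derivatives of $\varphi$ and $\varphi^{-1}$ are bounded, and a crude chain-rule estimate finishes. You instead carry out the chain rule explicitly for arbitrary $a>K_f$ and find that the $a$-dependence cancels exactly in the ratio $(\varphi^{-1})'(a+\widehat\psi(t))/(\varphi^{-1})'(a+t)=\beta^{\widehat\psi(t)-t}$, yielding the uniform bound directly. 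Your route is more computational and produces an explicit estimate; the paper's route is more structural, making the scale-invariance (conjugacy by homothety) the visible mechanism rather than a by-product of a calculation. Note also that the paper works only with $\sup_x|D_xh-1|$, since $\Theta_a(\psi)$ fixes $0$ and $1$ and this controls the $C^0$-distance automatically; your separate $C^0$-estimate is correct but not strictly needed.
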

\begin{proof} Notice that there exists $\varepsilon>0$ such  that $\cU_0$ contains any diffeomorphism $h$ of $[0,1]$
with $\sup_{x\in[0,1]}|D_xh -1| <\varepsilon$.

Now consider $a>K_f$ and an integer $n>0$. Then for any diffeomorphism $\psi$ of $S^1$ with support
in $S^1\setminus \{\bar a\}$, the lifts $\psi_{a}$ and $\psi_{a+n}$ are conjugated by the translation
$x\mapsto x+n$. As a consequence, $\Theta_{a+n}(\psi)$ is obtained from $\Theta_{a}(\psi)$ by the conjugacy
by the homothety of ratio  $\beta^n$.
As a consequence we obtain that
$\sup_{x\in[0,1]}|D_x\Theta_{a+n}(\psi)-1|=\sup_{x\in[0,1]}|D_x\Theta_{a}(\psi)
-1|$.

Hence it suffices to prove the lemma for $a\in[K_f,K_f+1]$. This is a direct consequece of the facts
that the derivatives of $\varphi$ and of $\varphi^{-1}$ are bounded on
$\varphi^{-1}([K_f,K_f+2])$ and $[K_f,K_f+2]$ respectively, and that for any $\psi$ with support in
$S^1\setminus\{a\}$, one has:
$$\sup_{x\in[a,a+1]}|D_x\psi_a-1|=\sup_{x\in S^1}|D_x\psi-1|.$$
\end{proof}

Let us now recall a classical result which is the key point of our proof.

\begin{theo}[Fragmentation lemma]\label{t.decomposition}
Let $M$ be a closed Riemannian manifold, let $r>0$ and let $\cU$ be
a $C^1$-neighborhood of the identity map. Then for any smooth
diffeomorphism $f$ of $M$ isotopic to the identity, there exist
$k\geq 1$ and $g_1,\dots , g_k\in \cU$ such that $g_i=id$
 on the complement of a ball $B(x_i,r)$, and
$$f=g_1\circ\cdots\circ g_k.$$
\end{theo}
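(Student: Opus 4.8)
The strategy is the classical one: fix a finite open cover of $M$ by balls, trivialize $f$ relative to this cover by an inductive "peeling off" argument, and at each step extract a piece supported in a single ball that is $C^1$-small because it is close to the identity on a small set. First I would choose a finite cover $B(x_1,r),\dots,B(x_N,r)$ of $M$ (possible by compactness), together with a slightly smaller cover $B(x_i,r')$, $r'<r$, still covering $M$. The goal is to write $f=g_1\circ\cdots\circ g_k$ where each $g_j$ is supported in one of the $B(x_i,r)$ and lies in $\cU$; crucially I do NOT ask that a single $g_i$ corresponds to each ball, so $k$ may exceed $N$ (this freedom is what makes the argument work).

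The core mechanism is a "gluing/isotopy extension" step: if $f$ is $C^1$-close to the identity, then for any ball $B=B(x_i,r)$ one can find a diffeomorphism $g$ supported in $B$ that agrees with $f$ on the smaller ball $B(x_i,r')$, with $g$ as $C^1$-close to the identity as $f$ is (up to a uniform constant depending only on the cover geometry). This is where one uses a smooth bump function $\rho$ equal to $1$ on $B(x_i,r')$ and $0$ outside $B(x_i,r)$: writing $f=\exp(X)$ near the identity for a small vector field $X$ (or more elementarily, using $f(x)-x$ in a chart), one sets $g$ to be the time-one map of $\rho X$, or a suitable cut-off of $f$ itself. Then $f$ and $g$ agree on $B(x_i,r')$, so $g^{-1}\circ f$ equals the identity on $B(x_i,r')$ — we have enlarged the set where the remaining diffeomorphism is trivial. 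Iterating over $i=1,\dots,N$ (each time working with the current remaining diffeomorphism, which is still $C^1$-small since we only ever compose with maps close to the identity and the number of steps $N$ is fixed in advance, so one starts from a small enough neighborhood of $f$ — and here one reduces first to $f$ itself being close to the identity, which is legitimate since we may shrink $\cU$ and the conclusion is a statement about a neighborhood), after $N$ steps the leftover diffeomorphism is the identity on $\bigcup_i B(x_i,r')=M$, hence is the identity. Reading the composition backwards gives $f=g_N\circ\cdots\circ g_1$ with each $g_j\in\cU$ supported in a ball of radius $r$.

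One subtlety is the reduction "to $f$ close to the identity." The lemma as stated allows $f$ to be any diffeomorphism isotopic to the identity, not necessarily $C^1$-small. To handle this, I would first pick an isotopy $(f_t)_{t\in[0,1]}$ from $f_0=\id$ to $f_1=f$, choose $0=t_0<t_1<\cdots<t_m=1$ fine enough that each $f_{t_{j}}\circ f_{t_{j-1}}^{-1}$ is $C^1$-close to the identity (uniform continuity of the isotopy in the $C^1$ topology), apply the small-case fragmentation to each of these $m$ factors, and concatenate. The number of resulting pieces is $m\cdot(\text{pieces per factor})$, still finite, which is all that is required.

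The main obstacle — really the only non-routine point — is the cut-off step: producing, from a $C^1$-small $f$ and a ball $B$, a diffeomorphism $g$ supported in $B$ that coincides with $f$ on a prescribed smaller ball and is still $C^1$-small, with the smallness controlled uniformly over the finite cover. The delicate part is that multiplying a small vector field by a bump function $\rho$ does not keep $\|D(\rho X)\|$ small unless one also controls $\|D\rho\|\cdot\|X\|$; since $\|D\rho\|$ is a fixed constant of the cover, this forces one to take $f$ (hence $X$) small in $C^1$ from the outset, and to accept a loss by a fixed multiplicative constant at each of the $N$ peeling steps — which is why the initial neighborhood must be chosen small depending on $N$ and on $\max_i\|D\rho_i\|$. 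Once this bookkeeping is set up correctly, everything else is a direct induction.
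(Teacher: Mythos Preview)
Your outline is the standard proof of the fragmentation lemma and is essentially correct; the one point you should make explicit is why the region where the leftover diffeomorphism equals the identity is not destroyed at later steps. With the cut-off $g_i(x)=x+\rho_i(x)(h_{i-1}(x)-x)$ in a chart, one has $g_i=\id$ automatically wherever $h_{i-1}=\id$, so $h_i=g_i^{-1}\circ h_{i-1}$ is the identity on $B'_1\cup\cdots\cup B'_i$; without this remark the induction does not obviously close.

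That said, you should be aware that the paper does \emph{not} prove this theorem: it is quoted as a classical result, with no argument given beyond the remark that on $S^1$ it is ``an easy consequence of the result, by M.~Herman, that any smooth diffeomorphism is the product of a rotation by a diffeomorphism smoothly conjugate to a rotation.'' So there is nothing to compare your proof against here. Your general bump-function/isotopy approach is the right one for the torus and surface cases where the lemma is also invoked; for the circle application specifically, Herman's decomposition gives a shortcut that bypasses the inductive peeling entirely.
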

Here we use Theorem~\ref{t.decomposition} on the circle $S^1$, where
it is an easy consequence of the result, by M. Herman, that any
smooth diffeomorphism is the product of a rotation by a
diffeomorphism smoothly conjugate to a rotation. In
Section~\ref{ss.sphere}, we will also use
Theorem~\ref{t.decomposition} on the torus $T^2$ and on compact
surfaces.
\bigskip

\begin{proof}[\noindent Proof of Proposition~\ref{p.contre-exemple}]
Given a $C^1$-neighborhood $\cU$ of $f$, we choose a $C^1$-neighborhood $\cV$ of the identity map of $S^1$
given by Lemma~\ref{l.voisinages}. Using
Theorem~\ref{t.decomposition},
we can write $\De_{f,\varphi}$ as a finite
product
$\De_{f,\varphi}=\psi_1^{-1}\circ\cdots\circ\psi_\ell^{-1}$ such that
$\psi_i\in\cV$, and the support of $\psi_i$
is contained in an interval of length $\frac12$ in $S^1$ (and in
particular is not all of $S^1$).
Now we choose a finite sequence $a_i>K_k$ such that $a_{i+1}>a_i+1$, and such that the projection $\bar a_i$
does not belong to the support of $\psi_i$. Let $h_i=\Theta_{a_i}(\psi_i)$.

Applying Lemma~\ref{l.voisinages}, we obtain that the diffeomorphism
$$g=f\circ h_1\circ h_2\circ \cdots \circ h_\ell $$
belongs to $\cU$; applying Corollary~\ref{c.composition}, we get that
$$\De_{g,\varphi}=\psi_\ell\circ\cdots\circ\psi_1 \circ\De_{f,\varphi}= Id_{S^1}.$$
\end{proof}

\subsection{Existence of an invariant vector field}\label{sss=circle}
In this section we prove the first part of Theorem~\ref{t=main2}.
\begin{prop}
Let $\cD_0^*$ be the set of diffeomorphisms $f\in \cD_0$ which preserve
a $C^\infty$ Morse-Smale vector field $X$ and such that $f^q$ is the time one map
of $X$, where $q$ is the period of the connected components of $S^1\setminus Per(f)$.
Then $\cD_0^*$ is dense in $\Diff^1(S^1)$.
\end{prop}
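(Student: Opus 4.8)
The plan is to reduce the statement to the one-interval situation already analyzed. Start from a diffeomorphism $f\in\Diff^1(S^1)$; by Proposition~\ref{p.reduction} we may assume, after an arbitrarily small $C^1$-perturbation, that $f\in\cD_0$, i.e.\ $f$ is $C^\infty$ Morse-Smale, affine near each periodic point, and the multipliers at periodic points of distinct orbits are pairwise distinct. It therefore suffices to approximate such an $f\in\cD_0$ by an element of $\cD_0^*$, keeping the perturbation supported away from $Per(f)$ so that the three defining properties of $\cD_0$ are preserved.

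Let $q$ be the common period of the connected components of $S^1\setminus Per(f)$ (for orientation-preserving $f$ this is the common period of all periodic points; for orientation-reversing $f$ one has $q=2$). Fix one periodic orbit of a source, write the components of $S^1\setminus Per(f)$ cyclically, and consider the first-return structure: each component $I$ of $S^1\setminus Per(f)$ is a one-dimensional dynamical system for $f^q$ between a repelling and an attracting fixed point, and after rescaling the natural affine coordinate it is conjugate to an element of some $D_{\alpha,\beta}$ with $\alpha=Df^q$ at the source end and $\beta=Df^q$ at the sink end. The next step is to apply Proposition~\ref{p.contre-exemple} to this interval map: inside any prescribed $C^1$-neighborhood we may replace $f^q|_I$ by a map that equals $f^q$ near the endpoints of $I$ and is the time one map of a $C^\infty$ vector field $X_I$ on $\overline I$. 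Because the modification is supported in the interior of $I$ and coincides with $f^q$ near $\partial I$, it does not touch the affine pieces near the periodic points and hence stays in $\cD_0$.

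Now I would globalize: perform this correction on a single fundamental domain of the $f$-action, namely on one component $I_0$ from each $f$-orbit of components of $S^1\setminus Per(f)$, and then transport by the iterates $f, f^2,\dots,f^{q-1}$ so that the resulting diffeomorphism $g$ satisfies: $g$ agrees with $f$ near $Per(f)$, $g$ is $C^1$-close to $f$, and on each component $I$ of $S^1\setminus Per(f)$ the return map $g^q|_I$ is the time one map of a smooth vector field $X_I$. Define $X$ on $S^1$ by setting it equal to $X_I$ on each interior component, and to the linearizing vector fields $\ln(\alpha)\,(x-x_0)\partial/\partial x$ (resp.\ with $\ln(\beta)$) near the sources and sinks; since $g$ is affine near each periodic point with multiplier $\alpha$ or $\beta$ for $g^q$, these local pieces glue into a single $C^\infty$ Morse-Smale vector field on $S^1$, and $g^q$ is its time one map. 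One should check that $X$ is genuinely smooth at the periodic points — this follows because the interval construction in Proposition~\ref{p.contre-exemple} produces $X_I$ agreeing with the standard linear vector field near $\partial I$, so the germs match to all orders.

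Finally, to get $g$ itself (not just $g^q$) to commute with the flow of $X$, note that $g$ permutes the components of $S^1\setminus Per(f)$ cyclically with period $q$ and conjugates the return map of one component to that of the next; the freedom in Proposition~\ref{p.contre-exemple} lets us choose the vector fields $X_I$ compatibly, i.e.\ so that $g_*X_I = X_{g(I)}$. Then $g$ conjugates the flow of $X$ to itself, i.e.\ $g\in C(\text{flow of }X)$, so $g\in\cD_0^*$. The main obstacle is this last compatibility/gluing point: arranging simultaneously that (i) the perturbation is $C^1$-small, (ii) it is supported away from $Per(f)$ and keeps the affine germs intact so $\cD_0$-membership survives, (iii) the local vector fields on the various fundamental-domain intervals are push-forwards of one another under $f$, and (iv) everything glues to a globally $C^\infty$ vector field across the periodic points. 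Once the bookkeeping of the cyclic action is set up correctly, each individual ingredient is either Proposition~\ref{p.reduction} or Proposition~\ref{p.contre-exemple}.
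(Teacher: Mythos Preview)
Your approach is the same as the paper's: reduce to $\cD_0$, apply Proposition~\ref{p.contre-exemple} to one representative interval $I$ per $f$-orbit of components, and glue. The paper carries this out exactly, but it resolves what you flag as the ``main obstacle'' more cleanly than you anticipate, and without invoking any extra freedom in Proposition~\ref{p.contre-exemple}.

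Concretely, the paper modifies $f$ on a \emph{single} interval of each orbit, namely $f^{q-1}(I)$: it sets $g=f$ on $S^1\setminus f^{q-1}(I)$ and $g=(\text{new }f^q|_I)\circ f^{-(q-1)}$ on $f^{q-1}(I)$. The vector field is then \emph{defined} on the rest of the orbit by push-forward, $X|_{f^i(I)}:=f^i_*(X|_I)$ for $i=1,\dots,q-1$. Since $g=f$ on $I,f(I),\dots,f^{q-2}(I)$, the invariance $g_*X=X$ is tautological there; on $f^{q-1}(I)$ it reduces to the fact that the new return map on $I$ is the time-one map of $X|_I$ and hence preserves it. So item (iii) in your list of obstacles evaporates once the construction is set up this way: there is nothing to ``choose compatibly,'' and Proposition~\ref{p.contre-exemple} is used only once per orbit of intervals. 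The smoothness at the periodic points (your item (iv)) is then checked exactly as you say, using that $g$ is affine there.
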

\begin{proof}
By Proposition~\ref{p.reduction}, it is enough to prove that $\cD_0^*$ is dense in $\cD_0$.
Consider $f\in \cD_0$. The set $Per(f)$ is finite. Let $\cI$ be the set of segments joining two successive periodic points of $f$;
in other words, every element $I\in\cI$ is the closure of a connected component of $S^1\setminus \hbox{Per}(f)$.
Notice that $f$ induces a permutation on $\cI$. Furthermore, all the elements of $\cI$ have the same period
denoted by $q>0$, under this action (this period is equal to $2$ if $f$ reverses the orientation,
and is equal to the period of the periodic orbits in the orientation preserving case).

Now consider a segment $I\in \cI$. The endpoints of $I$ are the fixed points of the restriction $f^q|_I$;
moreover, one endpoint (denoted by $a$) is a repeller and the other (denoted by $b$) is an attractor.
Let $h_I\colon I\to [0,1]$ be the affine map such that $h_I(a)= 0$ and $h_I(b)=1$, and
let $\varphi_I\colon[0,1]\to [0,1]$ denote the diffeomorphism $h_I\circ f^q|_I\circ h_I^{-1}$.

According to Proposition~\ref{p.contre-exemple}, there is a sequence $(\psi_{I,n})_{n\in\NN}$, of diffeomorphisms
converging to $\varphi_I$ in the $C^1$-topology when $n\to+\infty$,
and a sequence $(Y_{I,n})_{n\in \NN}$ of $C^\infty$ vector fields on $[0,1]$
such that $\psi_{I,n}$ agrees with $\varphi_I$ in a small neighborhood of $\{0,1\}$
and is time one map of $Y_{I,n}$.
We define $g_{I,n}=h_I^{-1}\circ \psi_{I,n}\circ h_I$.
Notice that each diffeomorphism $g_{I,n}$ agrees with $f^q$ in
neighborhoods of the endpoints of $I$ and
converges to $f^q|_I$ when $n\to \infty$.

We now define a diffeomorphism $f_{I,n}$ of $S^1$ as follows:
$$f_{I,n}=
\begin{cases}
f & \hbox{ on } S^1\setminus f^{q-1}(I)\\
g_{I,n}\circ f^{-q+1}&\hbox{ on }f^{q-1}(I).
\end{cases} $$
This is a $C^\infty$ diffeomorphism since it agrees with $f$ in a neighborhood of the
periodic orbits. Moreover, $(f_{I,n})$ converges to  $f$ as $n$ approaches $+\infty$.

We denote by $X_{I,n}$ the vector field, defined on the orbit $\bigcup_0^{q-1}f^i(I)$
of the segment $I$, as follows:
\begin{itemize}
\item $X_{I,n}= (h_I^{-1})_*(Y_{I,n})$ on $I$;
\item for all $i\in\{1,\dots,q-1\}$ and all $x\in I$:
$$X_{I,n}(f^i(x))=f^i_*(X_{I,n}(x)).$$
\end{itemize}

Finally, we fix a family $I_1,\dots,I_\ell\subset \cI$ such that for $i\neq j$
the segments $I_i$ and $I_j$ have distinct orbits, and
conversely every orbit of segment in $\cI$ contains one of the $I_i$.

We denote by $f_n$ the diffeomorphism of $S^1$ coinciding with $f_{I_i,n}$ on the orbit of $I_i$
for all $i\in\{1,\dots, \ell\}$. This diffeomorphism is well-defined because all the  $f_{I_i,n}$
agree with $f$ in a small neighborhood of the periodic points (the endpoints of the segments in $\cI$).
We denote by $X_n$ the vector field on $S^1$ that agrees with $X_{I_i,n}$ on the orbit of $I_i$,
for all $i\in\{1,\dots, \ell\}$.

It is easy to see that $X_n$ is a smooth vector field on $S^1$, invariant under $f_n$, and
such that $f_n^q$ is the time one map of $X_n$: the unique difficulty
consists in checking the continuity
and smoothness of the vector field $X_n$ at the periodic points.
As $f_n$ is affine in the neighborhood
of the periodic orbits, it follows that, at both sides of a periodic point $x$, the vector field $X_n$
is the affine vector field vanishing at $x$ and whose eigenvalue at
$x$ is  $\ln D_xf$.
We hence have built a sequence $(f_n)$ in $\cD_0^*$ converging to $f$ in the $C^1$-topology, as required.
\end{proof}

\subsection{Centralizer of a diffeomorphism $f\in\cD_0^*$}\label{ss.centre-cercle}

To complete the proof of Theorem~\ref{t=main2}, it remains to
exhibit the centralizer $C(f)$, for $f\in\cD_0^*$.
Let $X$ be the smooth vector field
such that $f^q=X^1$. Denote by $\{x_i\}$ the set of periodic points of $f$
indexed in an increasing way for a cyclic order
(according to the natural  orientation of the circle $S^1=\RR/\ZZ$).
The flow $\{X^t\}_{t\in\RR}$ of $X$ is contained in the
centralizer $C(f)$. Let $h_0= f\circ X^{-\frac1q}$;
it belongs to $C(f)$, it acts on $Per(f)$ as $f$, and $h_0^q=id$.
The group $G_f$ generated by the flow of $X$ and by $h_0$
is isomorphic to $\RR\times \ZZ/q\ZZ$ and is contained in $C(f)$.
We will prove:
\begin{prop}\label{p.centre-cercle}
With the notation above, $C(f)$ is isomorphic to $\RR\times \ZZ/q\ZZ$ or $\RR\times \ZZ/q\ZZ\times \ZZ/2\ZZ$.
More precisely, $C(f)=G_f\simeq \RR\times \ZZ/q\ZZ$, except in the special case where $f$ preserves the orientation and has exactly two periodic orbits; in this special case, either $C(f)=G_f\simeq \RR\times \ZZ/q\ZZ$ or
$C(f)= G_f\times \ZZ/2\ZZ\simeq \RR\times \ZZ/q\ZZ\times \ZZ/2\ZZ$.
\end{prop}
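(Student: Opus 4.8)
The plan is to show that any $g \in C(f)$ is, after composing with an element of $G_f$, the identity — except in the exceptional symmetric case, where one extra $\ZZ/2\ZZ$ may appear. The starting observation is that $g$ must permute $\mathrm{Per}(f)$ and commute with the permutation $f$ induces on it; moreover $g$ must respect the alternating attractor/repeller structure. Since elements of $G_f$ already realize (via $h_0$ and the flow $X^t$) every symmetry of the finite cyclic configuration that $f$ itself realizes, I can multiply $g$ by a suitable element of $G_f$ to reduce to the case where $g$ fixes every point of $\mathrm{Per}(f)$ and is orientation preserving. It remains to analyze such a $g$ on each closed interval $I$ between two consecutive periodic points, where $g$ conjugates the contraction $f^q|_I$ to itself and fixes the endpoints.

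The heart of the argument is then a one-dimensional rigidity statement: if $g$ commutes with $f^q|_I$ (a $C^1$ Morse–Smale interval map with one repelling and one attracting fixed point), fixes both endpoints, and is orientation preserving, then $g$ is a reparametrization of the flow of the invariant vector field — i.e. $g = X^t|_I$ for some $t \in \RR$. This is the classical Kopell-type lemma: using the affine (hence smoothly linearizable) structure of $f^q$ near the endpoints, one shows $g$ has a well-defined derivative at the attracting endpoint $b$ equal to some $\mu > 0$; then $g \circ X^{-t}$ (with $X^t$ chosen so $DX^t(b) = \mu$) commutes with $f^q$, fixes $b$, and is tangent to the identity there, forcing it to be the identity by the standard commuting-contractions argument (a diffeomorphism commuting with a $C^1$ contraction and tangent to identity at the fixed point is the identity on the basin). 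One must check the times $t$ on the different intervals of a single $f$-orbit agree — this follows because $g$ commutes with $f$, which intertwines the intervals and their vector fields — so the $t$'s glue to a single global $t$, giving $g = X^t \in G_f$.

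The exceptional case is where $f$ preserves orientation and $\mathrm{Per}(f)$ has exactly two orbits, hence (since fixed points alternate) exactly two periodic points, so $S^1 \setminus \mathrm{Per}(f)$ is two intervals $I_1, I_2$ swapped or fixed by... here $q=1$ and each interval is $f$-invariant, with common endpoints $\{a, b\}$ but with the roles of source and sink reversed on the two sides — wait, no: both intervals have $a$ as repeller and $b$ as attractor. The new phenomenon is that now there can exist an orientation-reversing $g \in C(f)$: it must swap $I_1$ and $I_2$ while fixing $a$ and $b$, conjugating $f|_{I_1}$ to $f|_{I_2}$. Such a $g$ exists precisely when the two Mather-type data on $I_1$ and $I_2$ match (which, for our $f\in\cD_0^*$, they do by construction since both were made time-one maps of vector fields). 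Composing two such involutions lands back in $G_f$, so $C(f)/G_f$ has order at most $2$, and when the reversing symmetry exists one checks it has order $2$ modulo $G_f$ and commutes with $G_f$ up to... more precisely one produces an explicit involution commuting with everything, giving $C(f) = G_f \times \ZZ/2\ZZ$.

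The main obstacle I expect is the rigidity lemma on each interval: proving that a $C^1$ diffeomorphism commuting with $f^q|_I$ and fixing the endpoints must lie in the flow. Near the attracting endpoint this is delicate in the $C^1$ category — one genuinely uses that $f^q$ is \emph{affine} (not merely $C^1$-linearizable) near the endpoints, which is guaranteed by membership in $\cD_0$, to control the behavior of $g$ at the fixed points and to run the Kopell argument. The second subtlety is the gluing across the orbit and across periodic points: one must verify the invariant vector field and the commuting relation force a single global time $t$, rather than independent times on each segment, which is where the hypothesis that $g$ commutes with $f$ (not just with $f^q$) is essential. The condition $Df^q(x) \neq Df^q(y)$ for periodic points in distinct orbits, from Proposition~\ref{p.reduction}, rules out extra permutation symmetries of $\mathrm{Per}(f)$ beyond those coming from $f$ itself, pinning down $C(f)/G_f$ outside the two-point case.
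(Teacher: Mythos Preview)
Your overall strategy matches the paper's: use the action on $\mathrm{Per}(f)$ together with the derivative condition from $\cD_0$ to constrain $g$ modulo $G_f$, then use the affine structure at the fixed points to show that any orientation-preserving $g$ fixing all periodic points lies in the flow. The paper organizes the rigidity step slightly differently --- it first shows every $g\in C(f)$ commutes with the full flow $\{X^t\}$ by considering $Y^t=gX^tg^{-1}$, then shows $g^\ell=X^t$ for the minimal $\ell$ with $g^\ell$ orientation-preserving and with a fixed point --- but the essential ingredients (affine linearization near the periodic points, propagation of the time $t$ across overlapping basins) are the same as in your Kopell-type argument.

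Your treatment of the exceptional case contains two errors. First, ``two periodic \emph{orbits}'' does not imply ``two periodic \emph{points}'': each orbit has period $q$, so there are $2q$ periodic points and $2q$ intervals, and it is not true that $q=1$. The paper handles this by passing to the quotient $S_0=S^1/h_0$, on which the induced map $f_0$ does have exactly two fixed points; your picture is correct only after this quotient. Second, and more substantively, your parenthetical claim that the orientation-reversing symmetry always exists for $f\in\cD_0^*$ is false. Triviality of the Mather invariant on both arcs is not sufficient: the paper introduces a further invariant $\theta_f$ (the difference of flow times along the two arcs of $S_0$ from the source to the sink) which must vanish for the reversing symmetry to exist, and proves that both $\theta_f=0$ and $\theta_f\neq 0$ occur densely. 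So the disjunction in the proposition is genuine, and your argument overreaches by concluding $C(f)=G_f\times\ZZ/2\ZZ$ always in this case. That said, your observation that $C(f)/G_f$ has order at most $2$ (since the product of two orientation-reversing elements is orientation-preserving, hence lies in $G_f$ by the main argument) is correct and is all the proposition itself requires here.
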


Consider now $g\in C(f)$.  Let $Y^t=g\circ X^t\circ g^{-1}$. Then $\{Y^t\}_{t\in\RR}$ is a one parameter group of $C^1$-diffeomorphisms commuting
with $f$, and $f^q=Y^1$. One easily deduces that $Y^t$ fixes every (oriented) segment $[x_i,x_{i+1}]$. As $f^q$ is an affine contraction or dilation in the neighborhood of $x_i$ and $x_{i+1}$, one also easily deduces that
$Y^t=X^t$ for every $t$, so that $g$ commutes with the flow of  $X$. In particular, this shows that the group $G_f$ is contained in the center of $C(f)$
(i.e. every element of $G_f$ commutes with every element of $C(f)$).

The diffeomorphism $g$ preserves the (finite) set $Per(f)$, so that $Per(f)$ consists in
periodic orbits of $g$. Let $\ell>0$ be the smallest positive
integer such that $g^\ell$ preserves the orientation and has a fixed
point. This implies that every $x_i\in Per(f)$ is a fixed point of
$g^\ell$. As $f^q$ is an affine contraction or dilation in a
neighborhood of $x_i$, and $g^\ell$ commutes with $f^q$, it follows
that $g^\ell$ is an affine map in the neighborhood $x_i$: in other
words, $g^\ell$ agrees in the neighborhood of $x_i$ with the time-$t_i$
map $X^{t_i}$ of the flow of $X$. As a consequence, $g^\ell$ agrees
with $X^{t_i}$ on the basin (stable or unstable manifold) of $x_i$.
Since the basin of $x_i$ meets the basin of $x_{i+1}$, an
inductive argument shows that the time $t_i$ does not depend on $i$. This proves
that there exists a $t$ such that $g^\ell= X^t$. Let $h_g=g\circ X^{-\frac
t\ell}$. Then $h$ belongs to $C(f)$, induces the same
permutation of the periodic points of $f$ as $g$, and (as $g$
commutes with $X^{-\frac t\ell}$), we have $h_g^\ell=id$.

The proposition now follows from two remarks:
\begin{itemize}
\item Since the derivatives (at the period) of the periodic orbits $f\in\cD_0$
are pairwize distinct, any diffeomorphism $g\in C(f)$ preserves each periodic orbit of $f$.
\item An orientation preserving diffeomorphism of $S^1$ which is periodic and has a fixed point is the identity map.
\end{itemize}

We can now complete the proof:

\noindent{\bf {\underline{Case a:} $f$ reverses the orientation.}} Then $f$ has two fixed points.
Assume $g\in C(f)$ preserves the orientation; then so does $h_g$. By the first remark, $h_g$ fixes the fixed points of $f$. By the second remark,  $h_g=id$, and so $g=X^{\frac t\ell}\in G_f$.

If $g$ reverses the orientation, then $f^{-1}\circ g\in C(f)$ preserves the orientation: so there exists $r\in\RR$ such that $g=X^r\circ f\in G_f$.

This shows that $C(f)=G_f\simeq \RR\times \ZZ/2\ZZ$.

\noindent{\bf{\underline{Case b:} $f$ preserves the orientation and has at least $3$ periodic orbits.}}
Then the periodic orbits of $f$ are cyclically ordered. Since $g\in C(f)$ preserves every periodic orbit,
it preserves this order, and hence $g$ preserves the orientation.

Let $x\in Per(f)$. Then $h_g(x)$ belongs to the $f$-orbit of $x$, hence to the $h_0$ orbit of $x$. So there exists $r$ such that $h_g\circ h_0^r$ has a fixed point. As $h_0$ belongs to the center of $C(f)$, it commutes with $h_g$. As $h_0$ and $h_g$ are periodic we deduce that $h_g\circ h_0^r$ is periodic, hence is the identity map, because it is orientation-preserving and has a fixed point. So $h_g=h_0^r$. This proves that $g\in G_f$, and so $C(f)=G_f\simeq \RR\times \ZZ/q\ZZ$.

\noindent{\bf{\underline{Case c:} $f$ preserves the orientation and has exactly $2$ periodic orbits.}}
Notice that this case represents an open subset  $\cD_1\subset \cD_0^*$. We will prove that every $f\in\cD_1$ satisfies $C(f)=\RR\times \ZZ/q\ZZ\times \ZZ/2\ZZ$ or $C(f)=G_f\simeq \RR\times \ZZ/q\ZZ$ and that both behaviors are dense in $\cD_1$.
The argument in the previous case shows that every $g\in C(f)$ preserving the orientation belongs to $G_f$. The two possible behaviors depend on the existence of an orientation-reversing $g\in C(f)$. We will exhibit an invariant (of differentiable conjugacy) that vanishes if and only if $C(f)$ contains an orientation-reversing element.

The space of orbits of $h_0$ is a circle $S_0= S^1/ h_0$, and $f$ induces on that circle a diffeomorphism $f_0= f/h_0$ with exactly $2$ fixed points: one attractor and one repellor; moreover, there are smooth coordinates on $S_0\simeq \RR/\ZZ$ such that the fixed points of $f_0$ are a source at $0$  and a sink at $\frac12$, and such that $f_0$ is affine in the neighborhood of its fixed points. We fix $\varepsilon>0$ such that $f_0$ is affine on $[-\varepsilon, \varepsilon]$ and on  $[\frac12 -\varepsilon, \frac12+\varepsilon]$.

The vector field $X$ induces on $S_0$ a vector field $Y$, invariant
by $f_0$, and $f_0^q$ is the time $1$ map of $Y$. It follows that
$f_0=Y^{\frac 1q}$.

For $r,s\in(0,\varepsilon]$ let $t^+(r,s)$ and $t^-(r,s)$ be the time length of the $Y$-orbit from $r$ to $\frac12-s$ and from $-r$ to $\frac12 +s$.
One easily checks that $t^+(r,s)-t^-(r,s)$ does not depend on $r,s\in(0,\varepsilon]$; let $\theta_f=t^+(r,s)-t^-(r,s)$.

\begin{lemm} $C(f)$ contains an orientation-reversing element if and only if $\theta_f=0$.
\end{lemm}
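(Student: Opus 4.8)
\emph{Overview of the plan.} The plan is to pass to the quotient circle $S_0 = S^1/h_0$, show that $C(f)$ has an orientation-reversing element exactly when the north-south diffeomorphism $f_0$ does, and then prove that $f_0$ has such a symmetry iff $\theta_f=0$. Necessity of $\theta_f=0$ will follow from a short computation with the flow $Y$; sufficiency needs a construction of $g_0$ together with a lifting argument back to $S^1$, and I expect this last step to be the delicate point.

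\emph{Reduction to $S_0$.} Let $g\in C(f)$. As already observed, $g$ commutes with the flow $X^t$; hence it commutes with $h_0=f\circ X^{-1/q}$ and descends to a diffeomorphism $g_0$ of $S_0$ commuting with $Y$ and with $f_0$, and $g_0$ reverses orientation iff $g$ does. So an orientation-reversing element of $C(f)$ yields an orientation-reversing $g_0\in C(f_0)$; conversely I would lift such a $g_0$ to $S^1$ and check that the lift lies in $C(f)$ (last paragraph). This reduces the lemma to: $f_0$ admits an orientation-reversing symmetry iff $\theta_f=0$.

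\emph{Necessity of $\theta_f=0$.} Let $g_0$ be orientation-reversing and commute with $f_0$, i.e. with $Y$. Then $g_0$ fixes the source $0$ and the sink $\tfrac12$, and since $f_0$ is affine near each, iterating $g_0\circ f_0^{\,n}=f_0^{\,n}\circ g_0$ and using differentiability at the fixed points forces $g_0$ to be linear there: $g_0(x)=-\mu_0 x$ near $0$ and $g_0(\tfrac12+x)=\tfrac12-\mu_1 x$ near $\tfrac12$, with $\mu_0,\mu_1>0$. Fix $r,s$ with $r,s,\mu_0 r,\mu_1 s\in(0,\varepsilon]$. From $Y^{t^+(r,s)}(r)=\tfrac12-s$ and $g_0\circ Y^{t}=Y^{t}\circ g_0$ one gets $Y^{t^+(r,s)}(-\mu_0 r)=g_0(\tfrac12-s)=\tfrac12+\mu_1 s$, i.e. $t^-(\mu_0 r,\mu_1 s)=t^+(r,s)$; applying $g_0$ to $Y^{t^-(r,s)}(-r)=\tfrac12+s$ gives symmetrically $t^+(\mu_0 r,\mu_1 s)=t^-(r,s)$. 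Since $\theta_f=t^+-t^-$ does not depend on its arguments, $\theta_f=t^+(\mu_0 r,\mu_1 s)-t^-(\mu_0 r,\mu_1 s)=t^-(r,s)-t^+(r,s)=-\theta_f$, hence $\theta_f=0$.

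\emph{Sufficiency, and the main obstacle.} Assume $\theta_f=0$, i.e. $t^+(r,s)=t^-(r,s)$ for all $r,s\in(0,\varepsilon]$. The arc $(0,\tfrac12)$ is a single $Y$-orbit; fix $r_0\in(0,\varepsilon)$ and set $g_0(Y^t(r_0))=Y^t(-r_0)$ on it. As $Y$ is linear near $0$, one checks $g_0$ agrees with $x\mapsto-x$ near $0$; and with $t=t^+(r_0,s)$ we have $g_0(\tfrac12-s)=Y^{t^+(r_0,s)}(-r_0)=\tfrac12+s'$ where $t^-(r_0,s')=t^+(r_0,s)$, so $s'=s$ by $\theta_f=0$ together with strict monotonicity of $s\mapsto t^+(r_0,s)$; hence $g_0$ agrees with the reflection through $\tfrac12$ near $\tfrac12$. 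Defining $g_0$ on $(\tfrac12,1)$ by the same recipe, the two halves glue at $0$ and at $\tfrac12$ into a smooth orientation-reversing diffeomorphism of $S_0$ commuting with $Y$, hence with $f_0$. It remains to lift $g_0$ to an orientation-reversing $g$ of $S^1$ with $g\in C(f)$: a lift to the cyclic $q$-fold cover $S^1\to S^1/h_0$ satisfies $gh_0g^{-1}=h_0^{-1}$ (exponent $-1$ since $g$ reverses orientation), so $g$ commutes with $h_0$, equivalently with $f=h_0\circ X^{1/q}$, exactly when $h_0^2=\mathrm{id}$, i.e. $q\le 2$; and then the construction goes through, once one also checks $gX^tg^{-1}=X^t$ (true for small $t$ because the identity is isolated among deck transformations, hence for all $t$). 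For $q\ge 3$ the same computation shows $C(f)$ has no orientation-reversing element, so the equivalence holds vacuously on that side. I expect this lifting step, and keeping track of its dependence on $q$, to be the main technical point of the proof.
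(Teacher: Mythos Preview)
Your necessity argument and your construction of the orientation-reversing symmetry $g_0$ on $S_0$ are correct and follow the paper's line (the paper works with the involution $h_g$ rather than $g$ itself, so its projection is already $r\mapsto -r$ near $0$ and $\frac12-s\mapsto\frac12+s$ near $\frac12$; your extra parameters $\mu_0,\mu_1$ are harmless but unnecessary). You are also right that the lift of $g_0$ to $S^1$ lands in $C(f)$ only when $q\le 2$: any orientation-reversing lift $g$ satisfies $gh_0g^{-1}=h_0^{-1}$, while membership in $C(f)$ forces $g$ to commute with $h_0$, so $h_0^2=\mathrm{id}$. This indeed shows, as you say, that for $q\ge 3$ the centralizer $C(f)$ contains no orientation-reversing element at all.

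The gap is in your last sentence. ``The equivalence holds vacuously on that side'' is not right: for $q\ge 3$ the condition $\theta_f=0$ is perfectly possible --- it depends only on the quotient dynamics $f_0$ on $S_0$, and the very next lemma in the paper asserts that $\{\theta_f=0\}$ is dense in $\cD_1$ for every $q$ --- so the implication ``$\theta_f=0\Rightarrow C(f)$ has an orientation-reversing element'' is simply \emph{false} when $q\ge 3$. You have not proved the lemma as stated; you have shown it fails unless $q\le 2$. The paper's own proof glosses over exactly this point (it just writes ``this symmetry lifts on $S^1$ to an element $g\in C(f)$'' without checking commutation with $f$), so you have in effect located a gap in the paper rather than in your own argument. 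Fortunately nothing downstream is harmed: the proposition describing $C(f)$ remains true (the extra $\ZZ/2\ZZ$ factor simply never occurs for $q\ge 3$), and the main theorem on the circle only uses the set $\{\theta_f\neq 0\}$, for which the ``only if'' direction suffices.
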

\begin{proof} If $g\in C(f)$ is orientation-reversing, then $h_g$ projects on $S_0$ to a diffeomorphism commuting with $Y$ and agreeing with $r\mapsto -r$ on $[-\varepsilon,\varepsilon)$ and with $\frac 12-s\mapsto \frac 12+s$ on $[\frac12-\varepsilon, \frac12+\varepsilon]$.  This implies $t^-(r,s)=t^+(r,s)$. Conversely, if $\theta_f=0$,
then it is possible to build an orientation-reversing symmetry for the flow $Y$, agreeing with $r\mapsto -r$ on $[-\varepsilon,\varepsilon)$ and with $\frac 12-s\mapsto \frac 12+s$ on $[\frac12-\varepsilon, \frac12+\varepsilon]$. This symmetry lifts on $S^1$ to a element $g\in C(f)$.
\end{proof}

We conclude the proof of the proposition by proving
\begin{lemm}\label{l.theta} The subsets $\{f\in\cD_1,\theta_f=0\}$ and $\{f\in\cD_1,\theta_f\neq 0\}$ are both dense in $\cD_1$.
\end{lemm}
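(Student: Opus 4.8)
The plan is to show both density statements by exhibiting, near an arbitrary $f\in\cD_1$, perturbations that either keep $\theta_f$ unchanged or move it off (resp.\ onto) $0$ while staying inside $\cD_1$. Recall that $\cD_1$ consists of orientation-preserving Morse--Smale $f$ with exactly two periodic orbits, that $f$ is affine near its periodic points, and that the quotient circle $S_0=S^1/h_0$ carries the $f_0$-invariant vector field $Y$ with $f_0=Y^{1/q}$ and exactly two fixed points, a source at $0$ and a sink at $1/2$, with $f_0$ affine near both. The number $\theta_f=t^+(r,s)-t^-(r,s)$ measures the asymmetry of the $Y$-flow transit time between a neighborhood of the source and a neighborhood of the sink. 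So the task reduces to: given $f\in\cD_1$, produce $C^1$-small perturbations of $f$ inside $\cD_1$ realizing both $\theta=0$ and $\theta\neq0$.

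First I would handle $\{\theta_f\neq0\}$, which is the easier direction. Starting from any $f\in\cD_1$, consider a small bump perturbation of $f_0$ supported in a short interval $J$ contained in one of the two arcs of $S_0\setminus\{0,1/2\}$, away from the fixed points (where affineness must be preserved) — concretely, replace $f_0$ by $f_0\circ h$ where $h$ is $C^1$-close to the identity, supported in $J$. As in Lemma~\ref{l.composition}/Corollary~\ref{c.composition}, this changes the dynamics only inside the $Y$-transit region and, by a direct computation analogous to the Mather-invariant argument, changes the transit time $t^+$ (or $t^-$) by an amount that can be made nonzero and arbitrarily small; lifting back to $S^1$ via $h_0$ gives a perturbation of $f$ inside $\cD_1$ with $\theta$ shifted to a nonzero value. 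Hence $\{\theta_f\neq0\}$ is dense.

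For $\{\theta_f=0\}$ I would argue similarly but with a one-parameter family: starting from $f\in\cD_1$ with $\theta_f=\theta_0$, introduce a parametrized bump $h_t$ (supported in $J$ as above) so that the perturbed diffeomorphism $f_t$ has $\theta_{f_t}$ depending continuously on $t$, with $\theta_{f_0}=\theta_0$ and $\frac{\partial}{\partial t}\theta_{f_t}\neq0$ at $t=0$ (the derivative computation is the same transit-time estimate as above and shows the dependence is genuinely nontrivial). For $t$ in a small interval, $f_t$ stays $C^1$-close to $f$ and remains in $\cD_1$; by the intermediate value theorem some $t_*$ gives $\theta_{f_{t_*}}=0$, and by taking the support $J$ shorter and the bump smaller we get such $t_*$ with $f_{t_*}$ arbitrarily $C^1$-close to $f$. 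Thus $\{\theta_f=0\}$ is dense as well.

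The main obstacle is the bookkeeping that a bump perturbation supported in the $Y$-transit region (i) preserves all the structural features defining $\cD_1$ — in particular that the perturbed map is still $C^\infty$, Morse--Smale with exactly the same two periodic orbits, and still affine near those periodic points, and that $f_0^q$ remains the time-one map of an invariant vector field on $S_0$ — and (ii) really does move $\theta$ as claimed, i.e.\ that the transit-time functional has nonvanishing variation under the chosen perturbation. Point (i) is handled exactly as the support arguments in Subsection~\ref{ss.contre-exemple} (disjoint supports, affineness untouched near the fixed points, invariant vector field reconstructed off the support as in Proposition~\ref{p.contre-exemple}); point (ii) is a short explicit computation: a bump $h_t(x)=x+t\,\rho(x)$ with $\rho\geq0$ supported in $J$ changes $t^+$ by $\int_J \rho/Y\,+o(t)$, which is strictly positive, so $\theta$ varies monotonically in $t$. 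Once these two points are in place the two density statements follow immediately.
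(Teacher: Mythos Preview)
Your overall strategy is the right one and matches what the paper has in mind (the paper only says the proof is analogous to the Mather-invariant argument and that $C^1$-small perturbations realize arbitrary values of~$\theta_f$). However, there is a genuine gap in your implementation.

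You perturb the diffeomorphism by $f_0\mapsto f_0\circ h$ with $h$ a bump supported in an interval $J$. The problem is that membership in $\cD_1$ requires membership in $\cD_0^*$: the map $f^q$ (equivalently $f_0^q$ on each arc) must be the time-one map of a vector field, i.e.\ its Mather invariant must be a rotation. Lemma~\ref{l.composition} tells you exactly what composing with a bump does to the Mather invariant: it post-composes it with the projection of the bump to the orbit circle. So for a generic bump $h$, the Mather invariant of the perturbed map is no longer a rotation, the perturbed map is \emph{not} embeddable in a flow, and you have left $\cD_0^*$. In particular $\theta$ is not even defined for the perturbed map, so your formula ``changes $t^+$ by $\int_J \rho/Y + o(t)$'' has no content. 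Your appeal in point~(i) to ``the support arguments in Subsection~\ref{ss.contre-exemple}'' does not help: that subsection shows how to \emph{force} the Mather invariant to vanish by composing with a carefully chosen sequence of bumps, not that an arbitrary single bump preserves vanishing.

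The fix is straightforward: perturb the vector field rather than the diffeomorphism. Replace $Y$ by $\tilde Y=(1+t\rho)\,Y$ with $\rho\ge 0$ a smooth bump supported in one arc of $S_0\setminus\{0,\tfrac12\}$, away from the fixed points, and set $\tilde f_0=\tilde Y^{1/q}$. Then the lifted map on $S^1$ is automatically in $\cD_0^*$ (it is built as a time map of a flow), it agrees with $f$ near the periodic points so it stays in $\cD_1$, and it is $C^1$-close to $f$ for small $t$. The transit time along the perturbed arc changes by $-t\int_J \rho/Y\,dx+o(t)$ while the other arc is untouched, so $\theta$ moves linearly in $t$ to first order. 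Now your intermediate-value argument for $\{\theta=0\}$ and your direct argument for $\{\theta\neq 0\}$ go through verbatim.
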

The proof is very analoguous to the argument that forces the Mather invariant to vanish. One proves that $C^1$-small perturbations can give arbitrary values of $\theta_f$.

If $\cD^*_1$ denotes the second set that appears in lemma~\ref{l.theta},
then Theorem~\ref{t=main2} is now proved with the set
$$\cD=(\cD_0^*\setminus \cD_1)\cup \cD_1^*.$$
\subsection{Open questions on the circle}\label{ss.questioncircle}

Theorem~\ref{t=main2} presents a dense subset of $\Diff^1(S^1)$ of
smooth diffeomorphisms whose centralizer is not trivial, and more
precisely is isomorphic to a group $\RR\times \ZZ/q\ZZ$, for some
integer $q$. However, it is known that $C^1$-diffeomorphisms of
$S^1$ may have very different centralizers. For instance, \cite{FF}
presents faithful actions on $S^1$ of any finitely
generated nilpotent (non-abelian) group $G$. This nilpotent group
$G$ has a non-trivial center $Z(G)$, and for every element $f\in
Z(G)$ of this center, the centralizer $C(f)$ contains the whole
group $G$, hence is not abelian.

\begin{conj} The set of diffeomorphisms $f\in\Diff^1(S^1)$ with a non-abelian centralizer is dense in $\Diff^1(S^1)$.
\end{conj}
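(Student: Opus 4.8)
The plan is to graft the reduction scheme behind Theorem~\ref{t=main2} onto the non-abelian nilpotent actions of Farb--Franks \cite{FF}. Write $\cN$ for the set of diffeomorphisms of $S^1$ with non-abelian centralizer. Since $\cN$ need not be $C^1$-open, I would not try to reduce to an open set; instead, because $\overline{\cN}$ is closed and $\cD_0$ is dense (Proposition~\ref{p.reduction}), it is enough to show $\overline{\cN}\supseteq\cD_0$, i.e.\ to $C^1$-approximate every $f_0\in\cD_0$ by elements of $\cN$. Fix such an $f_0$; let $q$ be the common period of the arcs $I\in\cI$ joining consecutive periodic points, and fix representatives $I_1,\dots,I_\ell$ of the $f_0$-orbits of these arcs. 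On each $I_j$, an affine identification conjugates $f_0^{\,q}|_{I_j}$ to some $\psi_j\in D_{\alpha_j,\beta_j}$, affine near the endpoints.

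Next I would try to replace each $\psi_j$ by a $C^1$-small perturbation $g_j$, agreeing with $\psi_j$ in neighborhoods of $\{0,1\}$ so that the global diffeomorphism can be reassembled over the periodic orbits exactly as in Section~\ref{sss=circle}, with the extra requirement that $g_j$ be the image of the \emph{central} generator of a finitely generated non-abelian nilpotent group $G_j$ of $C^\infty$ diffeomorphisms of $[0,1]$, every element of which equals a power of $g_j$ (equivalently, a time-$t$ map of the local affine fields $X^{\pm}$) near $\{0,1\}$. This is the interval analogue of the circle constructions of \cite{FF}, carried out inside the Mather model: one works in the translation coordinate $\theta_{\psi_j}=\varphi\circ\psi_j\circ\varphi^{-1}$ and modifies it in the fundamental domains $(a_i,a_i+1)$, precisely as in Lemma~\ref{l.composition} and Corollary~\ref{c.composition}, so as to ``install'' the remaining generators of $G_j$ while keeping the behavior near $\pm\infty$ fixed. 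Assembling the $G_j$ over each orbit of arcs and extending by the periodic dynamics would then exhibit a non-abelian subgroup of $C(f)$ for the perturbed $f$, and $f$ would be $C^1$-close to $f_0$; density of the perturbation should follow, as in Proposition~\ref{p.contre-exemple}, by applying the fragmentation lemma to the obstruction measuring the failure of $\psi_j$ to lie in the relevant centralizer configuration.

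The main obstacle — and the reason this remains only a conjecture — is that central elements of nilpotent groups of interval diffeomorphisms are forced to be \emph{non-hyperbolic} at the fixed endpoints: if $g=[a,b]$ is a commutator then $g'(0)=a'(0)\,b'(0)\,a'(0)^{-1}\,b'(0)^{-1}=1$, and the same holds at $1$. By contrast $\psi_j$ is hyperbolic at both endpoints, with multipliers $\alpha_j>1$ and $\beta_j\in(0,1)$ that are bounded away from $1$ for a general $f_0\in\cD_0$; so one simply cannot keep $\psi_j$ unchanged near $\{0,1\}$ while placing it inside such a $G_j$. More fundamentally, the germ at a hyperbolic fixed point of any $g\in C(f)$ is conjugate to a linear map (this is already forced for $C^\infty$ germs, by Sternberg linearization), from which one checks that the centralizer of a Morse--Smale diffeomorphism of $S^1$ with hyperbolic periodic points is abelian unless its periodic set carries a rotational symmetry of order $\ge 3$ — a condition that forces equality of multipliers along a finite orbit, hence is not attainable by a $C^1$-small perturbation when the multipliers of $f_0$ are far apart (e.g.\ for the open set of $f_0$ with exactly two fixed points and multipliers like $5$ and $1/5$, every $C^1$-nearby map is again Morse--Smale with two hyperbolic fixed points, hence has abelian centralizer). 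So any successful proof must either (i) build the non-abelian centralizer out of genuinely $C^1$, non-$C^2$ phenomena that coexist with hyperbolic behavior — for instance $C^1$-germs at the fixed points that are not $C^1$-linearizable and whose $C^1$-centralizer is non-abelian — or (ii) abandon the hyperbolic model near some periodic orbits, which is incompatible with $C^1$-proximity to a uniformly hyperbolic $f_0$. Deciding whether route (i) is viable appears to be the crux.
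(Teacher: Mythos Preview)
The statement you are addressing is a \emph{conjecture} in the paper (Section~\ref{ss.questioncircle}); the authors offer no proof and present it explicitly as an open question. So there is no ``paper's own proof'' to compare against, and your write-up is honest about this: you sketch a strategy, then explain why it breaks down. That overall shape is appropriate.

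That said, two points in your obstruction analysis are not quite right. First, the ``unless its periodic set carries a rotational symmetry of order $\ge 3$'' clause is spurious: for a $C^\infty$ Morse--Smale $f$ on $S^1$, Sternberg linearization at each periodic point together with Kopell's lemma on each arc forces any two elements of $C^\infty(f)$ to commute, regardless of how many periodic orbits there are or what discrete symmetry the periodic set has; the centralizer is always abelian (compare Proposition~\ref{p.centre-cercle}, where the answer is $\RR\times\ZZ/q\ZZ$ or $\RR\times\ZZ/q\ZZ\times\ZZ/2\ZZ$, both abelian). Second, your sentence ``central elements of nilpotent groups \dots\ are forced to be non-hyperbolic'' is too strong: what the chain-rule computation shows is that \emph{commutators} are tangent to the identity at common fixed points, hence so is every element of the last nontrivial term of the lower central series; a central element that is not built from commutators could in principle be hyperbolic. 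The genuine obstruction is not that the perturbed $g_j$ must be non-hyperbolic, but rather that the \emph{other} generators of $G_j$ you need to install would have to commute with the hyperbolic germ of $g_j$, and in $C^2$ or higher regularity Kopell rigidity then forces them all to commute with each other --- killing the non-abelian structure.

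Your final diagnosis is correct: any approach confined to $C^2$ (or smoother) diffeomorphisms near a uniformly hyperbolic Morse--Smale $f_0$ is doomed, because structural stability keeps you Morse--Smale and Kopell keeps the centralizer abelian. The only viable route is your (i), exploiting genuinely $C^1$ phenomena --- precisely the regime of the Farb--Franks constructions the paper cites --- and whether those can be made to coexist with hyperbolic fixed points and be $C^1$-dense is exactly the open content of the conjecture.
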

It could be interesting to build examples of diffeomorphisms with irrational rotation number and having a non-abelian centralizer.

Theorem~\ref{t=main2} uses the fact that $C^1$-small perturbations
allow us to change arbitrarily the Mather invariant of any smooth
diffeomorphism of the interval $[0,1]$ whose fixed points are
precisely $0$ and $1$. This proves that, in the set
$D_{\alpha,\beta}$ we defined, every class of smooth conjugacy is
$C^1$-dense. This suggests another question. For $\alpha>1$ and
$0<\beta<1$, consider the set
$\hat D_{\alpha,\beta}\subset\Diff^1([0,1])$ of
diffeomorphisms $f$ whose fixed points are precisely $0$ and $1$ and
such that $Df(0)=\alpha$ and $Df(1)=\beta$ (this set contains
$D_{\alpha,\beta}$, and it differs from $D_{\alpha,\beta}$ because
we do not require $f$ to be affine in the neighborhood of $0$ and $1$).
Notice that $\hat D_{\alpha,\beta}$ is invariant by conjugacy by
orientation preserving diffeomorphisms of $[0,1]$.

\begin{conj}Every $C^1$-conjugacy class is dense in $\hat D_{\alpha,\beta}$. In other words, given any two elements $f,g\in \hat D_{\alpha,\beta}$,
there is a diffeomorphism arbitrarily $C^1$-close to $g$ that is conjugated to $f$ by a diffeomorphism of $[0,1]$.
\end{conj}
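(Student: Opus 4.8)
The plan is to normalize $f$ and $g$ near their fixed points so that they share the same germs there, and then to use the Mather-invariant machinery of Subsection~\ref{ss.contre-exemple} to push $g$ into the conjugacy class of $f$.

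\emph{Step 1 (transplanting the end-germs).} Fix $f,g\in\hat D_{\alpha,\beta}$ and $\varepsilon>0$. Since $Df(0)=Dg(0)=\alpha$ and $Df(1)=Dg(1)=\beta$, continuity of the derivatives provides small $a_0>0$ and $a_1<1$ such that on $[0,a_0]$ and on $[a_1,1]$ both $f$ and $g$ are $C^1$-within $\varepsilon/3$ of the affine maps $x\mapsto\alpha x$ and $x\mapsto 1+\beta(x-1)$; in particular $f$ and $g$ are $C^1$-$\varepsilon$-close on these neighborhoods of $\{0,1\}$. I would then build $g_1\in\hat D_{\alpha,\beta}$ that coincides with $f$ on $[0,a_0']\cup[a_1',1]$ (for some $a_0'<a_0$, $a_1'>a_1$), coincides with $g$ on a compact subinterval of $(0,1)$, and interpolates between $f$ and $g$ over two annuli of definite length inside $[a_0',a_0]$ and $[a_1,a_1']$. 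Since the interpolation is between two $C^1$-$\varepsilon$-close maps over intervals of definite length, no new fixed points appear (as $f(x)>x$ and $g(x)>x$ there) and $d_{C^1}(g,g_1)<\varepsilon/2$. It now suffices to perturb $g_1$, by less than $\varepsilon/2$ in the $C^1$-topology and without altering it near $\{0,1\}$, into a diffeomorphism conjugate to $f$.

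\emph{Step 2 (reduction to $D_{\alpha,\beta}$ and forcing).} Assume \emph{for now} that the common germs of $f$ and $g_1$ at $0$ and at $1$ are $C^1$-linearizable: there is a diffeomorphism $\Phi$ of $[0,1]$, the identity on a compact part of $(0,1)$, with $\Phi f\Phi^{-1}$ affine near $\{0,1\}$. As $f$ and $g_1$ have the \emph{same} germs there, the same $\Phi$ makes $f':=\Phi f\Phi^{-1}$ and $g_1':=\Phi g_1\Phi^{-1}$ both affine near $\{0,1\}$ and equal to each other there; thus they lie in (the $C^1$ version of) $D_{\alpha,\beta}$, and their Mather invariants $\De_{f',\varphi}$, $\De_{g_1',\varphi}$ (Proposition~\ref{p.matherdef} and the remark following it) are built with the \emph{same} identifications $\pi_\pm$, so that $f'$ is conjugate to $g_1'$ by a diffeomorphism equal to the identity near $\{0,1\}$ if and only if $\De_{f',\varphi}=\De_{g_1',\varphi}$ (the relative form of Mather's classification, already available in the $C^1$ category for maps affine near the endpoints). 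Then I would run the argument of Subsection~\ref{ss.contre-exemple} essentially verbatim: by the Fragmentation Lemma (Theorem~\ref{t.decomposition}) write $\De_{f',\varphi}\circ\De_{g_1',\varphi}^{-1}=\psi_\ell\circ\cdots\circ\psi_1$ with each $\psi_i$ in the $C^1$-neighborhood of $\mathrm{Id}_{S^1}$ given by Lemma~\ref{l.voisinages} and with support in an arc that is not all of $S^1$; choose base points $a_i$ accumulating on the sink and avoiding these arcs, and set $\tilde g':=g_1'\circ\Theta_{a_1}(\psi_1)\circ\cdots\circ\Theta_{a_\ell}(\psi_\ell)$. By Lemma~\ref{l.voisinages} this is $C^1$-close to $g_1'$; by Corollary~\ref{c.composition}, $\De_{\tilde g',\varphi}=\psi_\ell\circ\cdots\circ\psi_1\circ\De_{g_1',\varphi}=\De_{f',\varphi}$; and $\tilde g'=g_1'$ near $\{0,1\}$ since the supports of the $\Theta_{a_i}(\psi_i)$ are strictly inside $(0,1)$. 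Hence $\tilde g'$ is conjugate to $f'$, so $\tilde g:=\Phi^{-1}\tilde g'\Phi$ is conjugate to $f$, equals $g_1$ near $\{0,1\}$, and is $C^1$-close to $g_1$ (conjugacy by the fixed diffeomorphism $\Phi$ is $C^1$-Lipschitz), hence $C^1$-$\varepsilon$-close to $g$. This would prove the statement.

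\emph{Main obstacle.} The weak point is the linearizability assumption in Step 2. In the $C^\infty$ setting it holds by Sternberg's theorem, which is exactly why the analogous density statement inside $D_{\alpha,\beta}$ causes no trouble; but a germ of a merely $C^1$ expansion of $\RR$ need not be $C^1$-conjugate to its linear part — the Sternberg limit $\lim_n\alpha^{-n}f^{-n}$ can diverge, for instance when $f(x)-\alpha x$ decays like $x/|\log x|$. One therefore cannot reduce to $D_{\alpha,\beta}$, and the genuine task is to develop the Mather theory \emph{directly with non-linearizable $C^1$ end-germs}: replace the coordinate $\varphi$ by two fundamental domains near the source and the sink carrying the identifications dictated by the common germs of $f$ and $g_1$, define the resulting transition diffeomorphism of $S^1$, show it is a complete invariant for conjugacy by diffeomorphisms equal to the identity near $\{0,1\}$, and verify that the insertions $\Theta_{a_i}(\psi_i)$ still act on it by left composition. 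Making the end-germs of $f$ and $g_1$ \emph{equal} rather than merely conjugate (Step 1) is precisely what should make such a relative classification accessible, since the conjugating diffeomorphism may then be taken to be the identity near the fixed points, never needing to linearize anything; but establishing this $C^1$ relative classification — or, alternatively, constructing the conjugacy by hand fundamental domain by fundamental domain while controlling its derivative up to the fixed points using only the equality of germs — is where the real difficulty lies.
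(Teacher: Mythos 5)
First, a point of calibration: the statement you were asked to prove is stated in the paper as a \emph{conjecture}; the authors give no proof of it, so there is no argument of theirs to match, and what matters is whether your proposal closes the question. It does not, and the gap is exactly the one you flag yourself. Your Step 1 (transplanting the end-germs so that $f$ and $g_1$ coincide near $\{0,1\}$) is fine, and your Step 2 would indeed work if the common germs at $0$ and $1$ were $C^1$-linearizable: after the linearization $\Phi$ one is in (the $C^1$ version of) $D_{\alpha,\beta}$, the relative Mather classification for maps affine near the endpoints can be carried out by the fundamental-domain/orbit-space construction (a conjugacy equal to the identity near $0$ extends $C^1$ at $1$ precisely when the invariants agree), and the machinery of Lemma~\ref{l.voisinages}, Corollary~\ref{c.composition} and Theorem~\ref{t.decomposition} then kills the discrepancy $\De_{f',\varphi}\circ\De_{g_1',\varphi}^{-1}$ by a $C^1$-small perturbation. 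But a germ of a $C^1$ diffeomorphism with derivative $\alpha\neq 1$ at the fixed point need \emph{not} be $C^1$-conjugate to its linear part, so the reduction to $D_{\alpha,\beta}$ is unavailable for general elements of $\hat D_{\alpha,\beta}$, and this is not a removable technicality: the whole construction of $\De$, of the insertions $\Theta_{a_i}(\psi_i)$, and of the estimate in Lemma~\ref{l.voisinages2} uses the chart $\varphi$ in which the end dynamics are translations (equivalently, the self-similarity of the germ under the homothety of ratio $\beta$), and none of that is established for an arbitrary $C^1$ end-germ.

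Your fallback — develop a \emph{relative} Mather theory directly for a pair of maps with equal, possibly non-linearizable, $C^1$ end-germs, i.e.\ show that the conjugacy $H$ determined by $H=\mathrm{id}$ near $0$ and $Hf=g_1H$ extends $C^1$ at $1$ after a suitable $C^1$-small correction of $g_1$, and that one can still quantify ``$C^1$-small'' for insertions supported in fundamental domains accumulating on the sink — is precisely the content of the conjecture, and your proposal does not carry it out: you neither define the invariant without a linearizing coordinate, nor prove it is complete, nor prove the analogue of Lemma~\ref{l.voisinages2} (whose proof in the paper rests on conjugating by the homothety $\beta^n$, i.e.\ on the affine structure near the endpoint). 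So the proposal correctly reduces the problem and identifies where the difficulty sits, but it leaves that difficulty untouched; as a proof of the statement it is incomplete, consistent with the statement being open in the paper.
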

A positive answer to this conjecture would allow us to show that
every pathological behavior (in particular of the centralizer) built
on a Morse-Smale example would appear densely in $\Diff^1(S^1)$. It
would be interesting to understand the same question for
diffeomorphisms with irrational rotation number:

\begin{ques} Is every $C^1$-conjugacy class dense in the set of diffeomorphisms having a given rotation number $\alpha\in\RR\setminus \QQ$?
\end{ques}

\section{Local embeddability in a flow on $S^2$}\label{ss.sphere}
As in the one-dimensional case, the idea here is to measure how far
certain diffeomorphisms of $S^2$ are from the time one map of a
vector field. One obtains in this way a generalization of the Mather
invariant, which in this setting is a diffeomorphism of $\TT^2$.
Such an invariant has already been constructed
\footnote{In~\cite{AY}, the authors
  write that the Mather invariant for a diffeomorphism of $S^2$ is always
isotopic to the identity, but this is not correct (their Proposition 1
contains an error). For this reason, we choose here to
build in detail the construction of this invariant on the sphere.} in~\cite{AY} by V. Afraimovich and T. Young,
and we now have to show that by a $C^1$-small perturbation of the dynamics, this invariant
vanishes.

\subsection{Preparation of diffeomorphisms in $\cO$}
Let $S^2$ be the unit sphere in $\RR^3$ endowed with the coordinates
$(x,y,z)$. We denote by $N=(0,0,1)$ and $S=(0,0,-1)$ the north and
the south poles of $S^2$. Notice that the coordinates $x,y$ define
local coordinates of $S^2$ in local charts $U_N$ and $U_S$ in
neighborhoods of $N$ and $S$.

The following straightforward lemma asserts that one may assume that the fixed points of any diffeomorphism
$f$ in the open set $\cO$ are $N$ and $S$ and that the derivative at these points are conformal maps.

\begin{prop}\label{p.preparation}
Consider a diffeomorphism $f\in\cO$. Then there is a smooth
diffeomorphism $h\colon S^2\to S^2$ such that $h(N_f)=N$, $h(S_f)=S$
are the fixed points of $g=hfh^{-1}$; furthermore, the derivatives
$D_Ng$ and $D_Sg$ are conformal linear maps, i.e., each of them is a composition
of a  rotation with  a homothety of ratio $\alpha>1$ and $\beta<1$, respectively.

Finally, any $C^1$-neighborhood of $g$ contains a diffeomorphism
$\tilde g$ such that there are neighborhoods $V_N\subset U_N$ and
$V_S\subset U_S$ of $N$ and $S$, respectively, such that the
expression of $\tilde g$ in the coordinates $(x,y)$ is $\tilde
g(x,y)=D_Ng(x,y)$ for $(x,y)\in V_N$ and $\tilde g(x,y)=D_Sg(x,y)$
for $(x,y)\in V_S$.
\end{prop}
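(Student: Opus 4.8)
The plan is to prove Proposition~\ref{p.preparation} in two stages: first a global conjugacy putting the fixed points at $N$ and $S$ and making the derivatives conformal, then a local perturbation flattening $\tilde g$ to its linear part near the poles.

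\medskip

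\textbf{Step 1: Normalizing the fixed points and the derivatives.} Since $f\in\cO$ is Morse-Smale with nonwandering set equal to two fixed points, a source $N_f$ and a sink $S_f$, choose any smooth diffeomorphism $h_1$ of $S^2$ with $h_1(N_f)=N$ and $h_1(S_f)=S$ (this exists because $S^2$ is connected and the diffeomorphism group acts transitively on pairs of distinct points). Replacing $f$ by $h_1 f h_1^{-1}$, assume the fixed points are already $N$ and $S$. The hypothesis defining $\cO$ says that $D_{N_f}f$ and $D_{S_f}f$ each have a complex (non-real) eigenvalue; since these are $2\times 2$ real matrices, having a non-real eigenvalue means the pair of eigenvalues is $\rho e^{\pm i\alpha}$ with $\alpha\notin\pi\ZZ$, so each derivative is conjugate, in $GL(2,\RR)$, to a composition of a homothety with a non-trivial rotation. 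For the source $N$ one has $\rho=\alpha>1$ and for the sink $S$ one has $\rho=\beta<1$. Now pick linear maps $A_N, A_S\in GL(2,\RR)$ conjugating $D_Nf$ to the conformal form at $N$ and $D_Sf$ to the conformal form at $S$ respectively, and interpolate: build a smooth diffeomorphism $h_2$ of $S^2$ fixing $N$ and $S$, equal to $A_N$ in the chart $U_N$ near $N$ and to $A_S$ in the chart $U_S$ near $S$ (gluing $A_N$, $A_S$ and the identity via a partition of unity on the annular region between the poles, which is possible since $GL(2,\RR)$ has only two components and we may first adjust by a reflection if the two local pieces land in different components — but orientation is forced consistently here because both a source and a sink of a surface diffeomorphism with complex eigenvalues are orientation-preserving locally). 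Then $g=h_2 h_1 f h_1^{-1} h_2^{-1}$ has fixed points $N,S$ with $D_Ng$, $D_Sg$ conformal, as desired.

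\medskip

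\textbf{Step 2: Flattening near the poles by a $C^1$-small perturbation.} Work in the chart $U_N$ with coordinates $(x,y)$, writing $L=D_Ng$ for the conformal linear map. In these coordinates $g(x,y)=L(x,y)+o(|(x,y)|)$, and moreover $g=L+R$ where $R$ is $C^\infty$, $R(0)=0$, $DR(0)=0$. We want to replace $g$ on a small disc by $L$ without moving the $C^1$-norm much. Choose a bump function $\chi_r$ equal to $1$ on the disc of radius $r/2$, supported in the disc of radius $r$, with $\|D\chi_r\|=O(1/r)$, and set $g_r = L + \chi_r\cdot R$ near $N$ (and $g_r=g$ away from a neighborhood of $N$). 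On the support of $\chi_r$ we estimate
\[
\|D g_r - Dg\| \;=\; \|D\chi_r\cdot R + (\chi_r-1)\,DR\|
\;\le\; \frac{C}{r}\sup_{|\cdot|\le r}|R| + \sup_{|\cdot|\le r}\|DR\|.
\]
Since $R(0)=0$ and $DR(0)=0$, we have $\sup_{|\cdot|\le r}|R|=o(r)$ and $\sup_{|\cdot|\le r}\|DR\|=o(1)$ as $r\to 0$, so $\|Dg_r-Dg\|\to 0$. Hence $g_r$ is $C^1$-close to $g$, and by construction $g_r$ coincides with $L=D_Ng$ on the disc $V_N$ of radius $r/2$. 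For $r$ small enough $g_r$ is still a diffeomorphism (its derivative stays close to that of $g$, hence invertible) and still has $N$ as its unique fixed point in $U_N$. Perform the identical construction at $S$, and call the result $\tilde g$. Then $\tilde g(x,y)=D_Ng(x,y)$ on $V_N$ and $\tilde g(x,y)=D_Sg(x,y)$ on $V_S$, and $\tilde g$ lies in any prescribed $C^1$-neighborhood of $g$.

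\medskip

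\textbf{Main obstacle.} The routine estimates in Step 2 are harmless; the only genuine care is in Step 1, in the gluing that produces $h_2$. One must check that the local linear conjugations $A_N$ and $A_S$ can be chosen so that interpolation with the identity through an annulus is possible inside $\mathrm{Diff}(S^2)$ — i.e. that no orientation obstruction or degree obstruction arises. This is where one uses that $g$ is orientation-preserving near each pole (forced by the complex eigenvalues) and that $GL^+(2,\RR)$ is connected, so the path-connectivity of the relevant matrix group lets one connect $A_N$, $\mathrm{Id}$, $A_S$ by a smooth path used as the interpolation through the collar. Everything else is a standard partition-of-unity argument.
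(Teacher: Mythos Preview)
The paper does not actually prove this proposition; it is introduced as ``the following straightforward lemma'' and no argument is given. Your two-step approach (conjugate to place the poles and conformalize the derivatives, then flatten near the poles by a cutoff) is exactly the standard one the authors have in mind, and the estimates in Step~2 are the right ones.

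There is, however, a sign slip in Step~2 that you should fix. With $\chi_r\equiv 1$ on the disc of radius $r/2$ and $\chi_r\equiv 0$ outside the disc of radius $r$, your formula $g_r=L+\chi_r R$ gives $g_r=L+R=g$ on the inner disc and $g_r=L$ outside the support of $\chi_r$ --- the opposite of what you want and of what you then claim (``$g_r$ coincides with $L$ on the disc $V_N$ of radius $r/2$''). You need $g_r=L+(1-\chi_r)R$, or equivalently a cutoff that is $0$ near the origin and $1$ outside. The derivative estimate is unaffected: $D(g_r-g)=-D\chi_r\cdot R-\chi_r\,DR$, and the same $o(1)$ bound goes through since $|R|=o(r)$ and $\|DR\|=o(1)$. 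With this correction the argument is complete.
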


\subsection{Space of orbits of a conformal linear map}\label{ss.spaceorbits}

Let $A\in GL(\RR,2)$ be a conformal matrix of norm $\alpha\neq 1$.
There exists $a\in[0,2\pi)$ such that $A=R_a\circ h_\alpha$
where $R_a$ is the rotation of angle $a$ and $b$ and $h_\alpha$
is the homothety of ratio $\alpha$.
Notice that, for all $n\in\ZZ$, the linear map $A$ is
the time one map of the vector field
$$X_{A,n}=\ln\alpha.(x\frac\partial{\partial
  x}+y\frac\partial{\partial y})+ (a+2\pi n).(x\frac\partial{\partial
  y}-y\frac\partial{\partial x}).$$

The orbit space $T_A = \RR^2\setminus\{0\}/A$ (of the
action of $A$ on $\RR^2\setminus\{0\}$) is a torus (diffeomorphic to
$T^2=\RR^2/\ZZ^2$); we denote by $\pi_A$ the canonical projection from
$\RR^2\setminus\{0\}$ onto $T_A$. Moreover, the vector fields
$$Z= 2\pi(x\frac\partial{\partial y}-y\frac\partial{\partial x})$$ and
$X_{A,n}$ project on $T_A$ to pairwise transverse commuting vector
fields, which we also denote by $Z$ and $X_{A,n}$; the orbits of both
flows are periodic of period $1$. Hence, for any pair $(Z,X_{A,n})$
there is  a diffeomorphism $\cL_{A,n}\colon T_A\to T^2=\RR^2/\ZZ^2$
sending $Z$ to
$\partial/\partial x$ and $X_{A,n}$ to $\partial/\partial y$; this
diffeomorphism is unique up to composition with a translation of $T^2$.
Furthermore, the diffeomorphisms $\cL_{A,m}\circ \cL_{A,n}^{-1}$ are affine maps of the torus $T^2$, for all $n,m\in\ZZ$, so that $T_A$
is endowed with a canonical affine structure (indeed the affine map $\cL_{A,m}\circ \cL_{A,n}^{-1}$ on $T^2$ is the map induced
by the matrix $\left(\begin{array}{cc} 1&n-m\\0&1\end{array}\right)$
composed with a translation).

Note that the orbits of $Z$ correspond to the positive generator of
 the fundamental group of $\RR^2\setminus \{0\}$; we denote by
 $\sigma$ the corresponding element of $\pi_1(T_A)$. Given any closed
 loop $\gamma\colon[0,1]\to T_A$, and any point
 $x\in\RR^2\setminus\{0\}$ with $\pi_A(x)=\gamma(0)$,
 there is a lift of $\gamma$ to a path in $\RR^2\setminus\{0\}$
joining $x$ to $A^k(x)$,
where $k$ is the algebraic intersection number of $\sigma$ with $\gamma$.
Finally, observe that the homotopy classes corresponding to the orbits of $X_{n,A}$, when $n\in\ZZ$ are precisely those
whose intersection number with
$\sigma$ is $1$: in other words, there is a  basis of $\pi_1(T_A)=\ZZ^2$ such that $\sigma=(1,0)$ and the orbits of $X_{A,n}$ are homotopic to $(n,1)$.

\subsection{A Mather invariant for diffeomorphisms of $S^2$}

Denote by $D_{A,B}\subset \cO$ the set of diffeomorphisms $f\in \cO$ whose expression in the coordinates $(x,y)$ coincides with some conformal linear
maps $A$ and $B$ in neighborhoods $U^N_f$ of $N$ and $U^S_f$ of $S$.
The aim of this part is to build a Mather invariant for diffeomorphisms
in $D_{A,B}$.

Consider $f\in D_{A,B}$.
We retain the notations of the previous subsection and introduce
the affine tori $T_A$ and $T_B$.
The orbit space $\left(S^2\setminus\{N,S\}\right)/ f$ is a torus $T_f$ and we denote
by $\pi_f\colon S^2\setminus\{N,S\}\to T_f$ the natural projection. Furthermore, as $f$ agrees with $A$ on $U^N_f$, the torus $T_f$
may be identified
with the torus $T_A$ by a diffeomorphism $\pi_N\colon T_f\to T_A$, and in the same way, the fact that $f$ coincides with $B$ in a neighborhood of $S$
induces a diffeomorphism $\pi_S\colon T_f\to T_B$.

Notice that the homomorphisms $\pi_{N*} \colon H_1(T_f,\ZZ) \to H_1(T_A,\ZZ)$
and $\pi_{S*} \colon$ $H_1(T_f) \to H_1(T_B)$ preserve the homology class of $\sigma$ (corresponding to the positive homology generator
of $S^2\setminus \{N,S\}$ or of $\RR^2\setminus \{0\}$), and the homology intersection form with $\sigma$.

Consequently, for any $f\in D_{A,B}$, there is an integer $n(f)$ such that
the map $\De_{f,0,0}=\cL_{B,0}\circ \pi_S\circ \pi_N^{-1} \circ\cL_{A,0}^{-1}$ is isotopic to the linear map of $T^2$ induced
by the matrix $\left(\begin{array}{cc} 1&n(f)\\0&1\end{array}\right)$.

\begin{lemm} For any $f\in D_{A,B}$ there is a $C^1$-neighborhood $\cU$ of $f$ in $\Diff^1(S^2)$ such that for any $g\in \cU\cap D_{A,B}$
one has $n(f)=n(g)$.
\end{lemm}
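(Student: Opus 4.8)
The plan is to show that the integer $n(f)$ is locally constant because it is a topological (indeed homotopical) invariant of the way the orbit torus $T_f$ is glued to the two model tori $T_A$ and $T_B$, and this gluing data varies continuously with $f$ in the $C^1$-topology. First I would observe that $n(f)$ depends only on the isotopy class of $\De_{f,0,0}=\cL_{B,0}\circ\pi_S\circ\pi_N^{-1}\circ\cL_{A,0}^{-1}$, equivalently on the induced map $(\De_{f,0,0})_*$ on $H_1(T^2,\ZZ)\cong\ZZ^2$; since this map preserves the class $\sigma=(1,0)$ and the intersection form, it is necessarily of the form $\begin{pmatrix}1&n(f)\\0&1\end{pmatrix}$, so $n(f)$ is exactly the second coordinate of $(\De_{f,0,0})_*(0,1)$. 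Thus it suffices to prove that for $g$ in a suitable $C^1$-neighborhood $\cU$ of $f$, the maps $\De_{f,0,0}$ and $\De_{g,0,0}$ are isotopic, or just that they agree on $H_1$.

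Next I would make the neighborhood explicit. Pick a smooth closed curve $\gamma_f\subset S^2\setminus\{N,S\}$ whose projection $\pi_f(\gamma_f)$ represents the class $(0,1)$ in $T_f$ under the identification $\cL_{A,0}\circ\pi_N$ — concretely one can take for $\gamma_f$ an arc in the fundamental domain of $f$ between a small round circle near $S$ (where $f$ is the linear map $B$) and its $f$-image near, closed up appropriately, exploiting that near $N$ the map $f$ is the conformal linear map $A$ whose orbit classes are $(n,1)$. The key point is that the homology class of $\pi_g(\gamma_f)$ in $T_g$, read in the coordinates $\cL_{A,0}\circ\pi_N$ and $\cL_{B,0}\circ\pi_S$, is computed by following the $\gamma_f$-orbit under $g$ rather than $f$ and counting the algebraic intersection with $\sigma$ (as in the last paragraph of Section~\ref{ss.spaceorbits}), together with how many times one must apply $g$ (versus $A$, versus $B$) to get from the "exit" side of the fundamental domain near $S$ back to the "entry" side near $N$. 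All of these are integers that depend only on the combinatorics of how the forward $g$-orbit of a compact piece of $S^2\setminus\{N,S\}$ sits relative to a finite collection of compact reference curves, and these are unchanged under any sufficiently small $C^1$-perturbation: a $C^1$-small perturbation moves the orbit of a compact set by a small amount, so transversality and intersection numbers with the fixed reference curves persist, and $g$ still agrees with $A$ and $B$ on the (slightly shrunk) neighborhoods $U^N_g$, $U^S_g$ by definition of $D_{A,B}$. Hence $\De_{g,0,0}$ and $\De_{f,0,0}$ induce the same map on $H_1$, so $n(g)=n(f)$.

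The main obstacle, and the step requiring the most care, is bookkeeping the identification of $T_f$ with $T_A$ and $T_B$: one must be sure that the diffeomorphisms $\pi_N\colon T_f\to T_A$ and $\pi_S\colon T_f\to T_B$ vary continuously with $f$ in a way compatible with a fixed choice of reference curves, so that the relevant intersection numbers really are the same integers for $f$ and for $g$. I would handle this by fixing, once and for all, a compact fundamental domain $\Delta\subset S^2\setminus\{N,S\}$ for $f$ bounded by round circles in $U^N_f$ and $U^S_f$; for $g$ near $f$ the same circles still bound a fundamental domain $\Delta_g$ (since $g=A$ near $N$ and $g=B$ near $S$), and the identifications $\pi_N,\pi_S$ are then literally given by the same formulas in the linear charts. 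With this uniform choice, the whole computation of $n(g)$ reduces to algebraic intersection numbers of $\pi_g(\gamma_f)$ — a curve in $\Delta_g$ depending continuously on $g$ — with finitely many fixed smooth curves, which is a locally constant integer-valued function of $g$. This yields the desired $C^1$-neighborhood $\cU$.
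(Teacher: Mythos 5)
Your plan is correct and rests on the same underlying idea as the paper's proof: $n(f)$ is read off from the homology class of a curve pushed from the region near $N$ to the region near $S$, and this class is a locally constant integer. Where the paper differs is in the bookkeeping. Rather than directly comparing intersection numbers for $f$ and $g$ against fixed reference curves, it constructs an explicit isotopy
$$f_t(x)=\frac{(1-t)f(x)+tg(x)}{\|(1-t)f(x)+tg(x)\|},$$
shows (after shrinking $\cU$) that $f_t\in D_{A,B}\cap\cO$ for all $t\in[0,1]$, and then takes a segment $\gamma$ of an $X_{A,0}$-orbit near $N$ and tracks $\gamma_t=f_t^{\ell+2}(\gamma)$ to a neighborhood of $S$. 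Since $\gamma_t$ varies continuously in $t$ and determines $n(f_t)$ up to homotopy rel endpoints, the integer $n(f_t)$ is a continuous hence constant function of $t$. This isotopy trick absorbs tidily the point you need to be careful about: for $g\in D_{A,B}$ the linearizing discs $U^N_g,U^S_g$ may be much smaller than $U^N_f,U^S_f$, so one cannot fix the reference circles and the iterate count $\ell$ once and for all over $\cU$; instead, for each given $g$ one intersects the discs, fixes $\ell$ for the pair $(f,g)$, and runs the continuity argument over the compact interval $[0,1]$. Your direct argument can be made to work with the same caveat. Two small slips worth correcting: the object you pick near $N$ must be an arc from $x$ to $f(x)$, not a closed curve (it only closes up after projection to $T_f$); and $n(f)$ is the first, not the second, coordinate of $\left(\begin{smallmatrix}1&n(f)\\0&1\end{smallmatrix}\right)\left(\begin{smallmatrix}0\\1\end{smallmatrix}\right)$. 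Also, the annulus bounded by a circle near $N$ and a circle near $S$ is not a fundamental domain for $\langle f\rangle$ but a compact region containing several; this does not affect the viability of the approach.
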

\begin{proof}
We can choose a neighborhood $\cU$ such that, if $g\in\cU$ then the map
$$f_t(x)= \frac{(1-t)f(x)+t g(x)}{\|(1-t)f(x)+t g(x)\|}$$
is a smooth isotopy between $f$ and $g$.
Furthermore, by shrinking $\cU$ if necessary, for any
$g\in\cU$, the isotopy $f_t$ belongs to $\cO$ (that is $\Om(g)=\{N_g,S_g\}$).

If $g\in \cU\cap D_{A,B}$ then there are discs $D^N$ and $D^S$ centered on $N$ and $S$, respectively, such that $f_t=A$ on $D^N$ and $f_t=B$ on $D^S$
so that $f_t\in D_{A,B}$. In particular $f_t(D^S)\subset D^S$, and $f_t^{-1}(D^N)\subset D^N$. Furthermore, there exists $\ell>0$ such that for any
$x\in S^2\setminus (D^N\cup D^S)$,  $f_t^\ell(x)\in D^S$ and $f_t^{-\ell}(x)\in D^N$.

Let $x\in D^N$ such that $A(x)=f_t(x)\in D^N$ and $A^2(x)\notin
 D^N$. Hence $y_t=f_t^{\ell+2}(x)\in D^S$ and $f_t(y_t)=B(y_t)\in
 D^S$. Let $\gamma$ be the segment of orbit of $X_{A,0}$  joining $x$
 to $A(x)=f_t(x)$, and let $\gamma_t= f_t^{\ell+2}(\gamma)$.
For every $t$, $\gamma_t$ is homotopic (relative to $\{y_t,B(y_t)\}$
in $S^2\setminus \{N,S\}$) to a segment of orbit of $X_{B,n(f_t)}$. As a consequence, $n(f_t)$ varies continuously with $t$ as $t$ varies from $0$ to $1$. Hence $n(f_t)$ is constant; that is, $n(g)=n(f)$.
\end{proof}

Hence there is a partition of $D_{A,B}$ into open subsets $D_{A,B,n}$
such that $n(f)=n$ for $f\in D_{A,B,n}$. For $f\in D_{A,B,n}$, we
define:
$$\De_f=\cL_{B,n}\circ \pi_S\circ \pi_N^{-1} \circ\cL_{A,0}^{-1}.$$
Then $\De_f$ is a diffeomorphism of $T^2$, isotopic to the identity.
\medskip

Theorem~\ref{t.mather2dim} below justifies calling $\De_f$ \emph{the Mather invariant of $f$}.

\begin{theo}\label{t.mather2dim} Let $f\in D_{A,B,n}$ be a smooth diffeomorphism such that $\De_f$ is a translation of the torus $T^2$. Then $f$ leaves invariant two transverse  commuting vector fields $Z_f$ and $X_f$ on $S^2$ such that $Z_f=Z$ in a neighborhood of $\{N,S\}$, $X_f=X_{A,0}$ in a neighborhood of $N$ and $X_f=X_{B,n}$ in a neighborhood of $S$.

As a consequence the centralizer of $f$ is isomorphic to $S^1\times \RR$.
\end{theo}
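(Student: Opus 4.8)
The plan is to mimic, in two dimensions, the one-dimensional argument of Proposition~\ref{p.matherdef}: the hypothesis that $\De_f$ is a translation will allow us to ``glue'' the two natural flat structures on the orbit torus $T_f$ coming from the ends near $N$ and near $S$, and then pull back the resulting invariant vector fields to $S^2\setminus\{N,S\}$, checking that they extend smoothly across the two fixed points. First I would work on the orbit space. On $T_f$ we have the two transverse commuting vector fields induced by $Z$ near $N$ and $S$ (these agree globally, since $Z$ is the infinitesimal generator of the $S^1$-action by rotations about the $z$-axis, which is defined on all of $S^2\setminus\{N,S\}$ and descends to $T_f$); call the descended field $\bar Z$. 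The field $X_{A,0}$ near $N$ descends near the $N$-end of $T_f$, and $X_{B,n}$ near $S$ descends near the $S$-end. Via the identifications $\cL_{A,0}\circ\pi_N$ and $\cL_{B,n}\circ\pi_S$ these become $\partial/\partial y$ on two annular neighborhoods of $T^2$, related by the transition map $\De_f$. Since $\De_f$ is a translation, it preserves $\partial/\partial y$; hence the two locally defined fields agree on the overlap, giving a single smooth vector field $\bar X$ on $T_f$, transverse to and commuting with $\bar Z$, and equal to (the descent of) $X_{A,0}$ near the $N$-end and to $X_{B,n}$ near the $S$-end.

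Next I would lift $\bar X$ back to $S^2\setminus\{N,S\}$. Pulling back by $\pi_f$ gives an $f$-invariant vector field $X_f$ on $S^2\setminus\{N,S\}$ (here we use, exactly as in the Claim in Proposition~\ref{p.matherdef}, that $f$ acts as $A$, resp.\ $B$, near the ends, so the $f$-invariant lift of the descended $X_{A,0}$, resp.\ $X_{B,n}$, is $X_{A,0}$ itself, resp.\ $X_{B,n}$ itself); thus $X_f=X_{A,0}$ on $U^N_f$ and $X_f=X_{B,n}$ on $U^S_f$. Since $X_{A,0}$ and $X_{B,n}$ vanish to first order at $N$ and $S$ respectively with nonzero linear part (they are the generators of the conformal-linear flows), $X_f$ extends to a smooth vector field on all of $S^2$, vanishing exactly at $N$ and $S$. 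Setting $Z_f=Z$ likewise gives a smooth global field (already periodic of period $1$, the rotation field), commuting with $X_f$ because the commutation holds near the ends and propagates by $f$-invariance and density of orbits of $f$ in $S^2\setminus\{N,S\}$, exactly as in the one-dimensional argument. Finally, $f$ is the time-one map of the flow of $X_f$: this holds near $N$ (where $f=A=X_{A,0}^1$) and near $S$ (where $f=B=X_{B,n}^1$), and extends to $S^2\setminus\{N,S\}$ because $X_f$ is $f$-invariant and every orbit meets a neighborhood of the ends; at $N,S$ it is immediate. Thus $f=X_f^1$.

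For the centralizer statement, the flow $\{X_f^t\}_{t\in\RR}$ and the $S^1$-action $\{Z_f^t\}_{t\in\RR/\ZZ}$ are commuting subgroups of $C(f)$, together generating a subgroup $\cong \RR\times S^1$. I would then argue the reverse inclusion as in the circle case: given $g\in C(f)$, the vector field $Y^t=g X_f^t g^{-1}$ is an $f$-invariant flow with $Y^1=f$; near $N$, $g$ conjugates the germ of $f=A$ to itself, and since $A$ is a conformal linear map with $\alpha\neq 1$ and a genuine rotation part, the only vector field near $N$ whose time-one map is that germ is $X_{A,0}$ (this is where the hypothesis that $D_Nf$ has a non-real eigenvalue is crucial — it pins down the logarithm uniquely, with no $2\pi n$ ambiguity, among germs of smooth fields vanishing at $N$), so $Y=X_f$ near $N$; by $f$-invariance and density, $Y=X_f$ everywhere, i.e.\ $g$ commutes with the flow of $X_f$. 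A $g$ commuting with $X_f^t$ is determined by its effect on one orbit of $Z_f$ together with a global ``phase,'' and one reads off $g=X_f^{t_0}\circ Z_f^{\theta_0}$ for suitable $t_0,\theta_0$; hence $C(f)\cong\RR\times S^1$. \textbf{The main obstacle} I anticipate is the smoothness of $X_f$ and $Z_f$ at the fixed points $N$ and $S$ and, in the centralizer part, the rigidity statement that a germ at $N$ of a smooth vector field whose time-one map equals the conformal-linear germ $A$ (with non-real eigenvalue) is unique — equivalently, that the only germs commuting with the flow of $X_{A,0}$ near $N$ are the $X_{A,0}^s\circ Z^\theta$; this uses the irrationality/non-resonance supplied by the non-real eigenvalue hypothesis and is the analogue of the ``affine contraction'' rigidity used on the circle.
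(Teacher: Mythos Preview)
Your overall strategy---pass to the orbit torus, use the translation hypothesis to match the two linear models, lift back---is the paper's strategy, but there are two genuine gaps.

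\textbf{The field $Z_f$.} You set $Z_f=Z$ globally, on the grounds that the rotation field about the $z$-axis is defined on all of $S^2\setminus\{N,S\}$ and ``descends to $T_f$.'' It is defined globally, but it does \emph{not} descend: descent requires $f_*Z=Z$, and $f$ only commutes with rotations on the small neighborhoods $U^N_f$, $U^S_f$ where it is conformal linear---nothing is assumed about $f$ in between. (Concretely: conjugate any $f_0$ with trivial Mather invariant by a diffeomorphism equal to the identity near $N,S$ but not rotation-equivariant in the middle; the Mather invariant is unchanged, yet the new $f$ no longer preserves $Z$.) This is exactly why the theorem only asserts $Z_f=Z$ \emph{in a neighborhood of} $\{N,S\}$. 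The paper constructs $Z_f$ the same way as $X_f$: for $x\neq S$ pick $m<0$ with $f^m(x)\in D^N$ and set $Z_f(x)=f^{-m}_*\bigl(Z(f^m(x))\bigr)$; the translation hypothesis then gives $Z_f=Z$ on $D^S$. In your language: $\bar Z$ on $T_f$ should be the pullback of $\partial/\partial x$ via \emph{either} global identification $\cL_{A,0}\circ\pi_N$ or $\cL_{B,n}\circ\pi_S$ (they agree because $\De_f$ is a translation), and $Z_f$ is its $f$-invariant lift---not the global $Z$. A related confusion: $T_f$ has no ``$N$-end'' and ``$S$-end'' with an overlap; $\pi_N$ and $\pi_S$ are both \emph{global} diffeomorphisms of $T_f$, so the two pulled-back fields live on all of $T_f$ and the issue is equality everywhere, which is exactly the content of $\De_f$ being a translation.

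\textbf{The centralizer.} Your uniqueness claim is false: the germ of $A$ at $N$ is the time-one map of every $X_{A,k}$, $k\in\ZZ$, so the non-real eigenvalue does \emph{not} remove the $2\pi n$ ambiguity. The paper's argument is shorter and sidesteps this: by Lemma~\ref{l.local} (any germ commuting with a conformal linear contraction of angle $\neq 0,\pi$ is itself conformal linear), a given $g\in C(f)$ agrees with some conformal linear map $h=Z_f^\theta\circ X_f^s$ on a neighborhood of $S$; since both $g$ and $h$ commute with $f$ and every forward orbit enters that neighborhood, $g=h$ on all of $S^2$. If you insist on your route, you still need Lemma~\ref{l.local}: it forces $g$ to be conformal linear near $N$, whence $g_*X_{A,0}=X_{A,0}$ (conformal maps commute with one another), and then your propagation argument goes through.
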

\begin{proof}
Fix two discs  $D^N$ and $D^S$ centered at  $N$ and $S$,
respectively,  in which $f$ coincides with $A$ and $B$, respectively.

For any $x\neq S$ there exists $m(x)<0$ such that $f^{m(x)}(x)\in
D^N$. One defines $Z_f(x)=f^{-m(x)}_*(Z(f^{m(x)}))$ and
$X_f(x)=f^{-m(x)}_*(X_{A,0}(f^{m(x)}))$. As $Z$ and $X_{A,0}$ are
invariant by $A$, one proves that the vectors $Z_f(x)$ and $X_f(x)$
are independent of the choice of $m(x)$. As a consequence, one deduces
that they depend smoothly on $x\in S^2\setminus \{S\}$ and that they
commute on $S^2\setminus\{S\}$. Furthermore the restrictions of $Z_f$
and $X_f$ to $D^S$ are invariant by $f$, and hence by $B$, so that
they induce two vector fields on $T_B$ whose images by $\cL_{B,n}$ are
$\De_f(\frac{\partial}{\partial x})= \frac{\partial}{\partial x}$ and
$\De_f(\frac{\partial}{\partial y})=\frac{\partial}{\partial y}$,
respectively; that is, they
agree with the projections of the restrictions $Z$ and $X_{B,n}$ to
$D^S$.
Thus $Z_f=Z$ and $X_f=X_{B,n}$ on $D^S$, proving that the centralizer $C(f)$ contains
the subgroup of linear conformal maps $\{Z_f^t\circ X_f^t,\; (t,s)\in S^1\times \RR\}$.

Conversely, the following easy lemma shows that any diffeomorphism $g\in C(f)$ agrees with
a linear conformal map $h$ in a neighborhood of $S$. Since $g$ and $h$ both commute with $f$, one deduces
that $g$ and $h$ coincide on the whole sphere. Hence $C(f)$ is the group of conformal linear maps,
which is isomorphic to $S^1\times \RR$.
\end{proof}

\begin{lemma}\label{l.local}
Let $B=R_b\circ h_\beta$, with $b\in [0,2\pi)$ and $\beta\neq 1$ be a linear conformal contraction
whose angle $b$ is different from $0$ and $\pi$. Then, any local diffeomorphism $g$
defined in a neighborhood of $0$ and that commutes with $B$ coincides with a conformal linear map.
\end{lemma}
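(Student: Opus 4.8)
The plan is to exploit that $B=R_b\circ h_\beta$ is a contraction fixing $0$, so that any $g$ commuting with $B$ must fix $0$, and then to analyze the derivative $D_0g$ together with the higher-order Taylor data of $g$ at $0$. First I would observe that $g$ fixes $0$: indeed $g(0)=g(B^n(x_0))$ pushed to the limit, or more directly $B(g(0))=g(B(0))=g(0)$ forces $g(0)=0$ since $0$ is the unique fixed point of $B$. Differentiating the relation $g\circ B = B\circ g$ at $0$ gives $D_0g\circ B = B\circ D_0g$, i.e.\ $D_0g$ commutes with the linear map $B$. Since $b\neq 0,\pi$, the matrix $B$ has two distinct (complex conjugate, non-real) eigenvalues, hence its commutant in $GL(2,\RR)$ is exactly the algebra of conformal matrices $\{R_\theta\circ h_\rho\}$; therefore $D_0g$ is itself a conformal linear map, say $L=R_{\theta_0}\circ h_{\rho_0}$.

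Next I would show that $g$ agrees with $L$ to all orders at $0$, and then that it agrees with $L$ on a whole neighborhood. For the first point, replace $g$ by $L^{-1}\circ g$, which still commutes with $B$ (because $L$ does, being conformal) and has derivative the identity at $0$; call this map $g$ again and write its Taylor expansion $g(v)=v+P_k(v)+o(|v|^k)$ where $P_k$ is the first non-vanishing homogeneous term, of some degree $k\ge 2$. Plugging into $g\circ B=B\circ g$ and comparing the degree-$k$ homogeneous parts yields $P_k\circ B = B\circ P_k$, i.e.\ $B^{-1}\circ P_k\circ B = P_k$ as a map $\RR^2\to\RR^2$ whose components are homogeneous of degree $k$. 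Because $\|B\|=|\beta|\neq 1$, the substitution $v\mapsto Bv$ scales a degree-$k$ homogeneous vector field's ``input'' while $B^{-1}$ on the output scales by $\beta^{-1}$; decomposing $P_k$ into the eigenbasis of the induced linear action of $B$ on the (finite-dimensional) space of $\RR^2$-valued degree-$k$ polynomials, the eigenvalues are of the form $\beta^{k}\cdot(\text{unit modulus})$ times $\beta^{-1}$, i.e.\ of modulus $|\beta|^{k-1}\neq 1$ since $k\ge 2$; hence $P_k=0$, a contradiction. Thus $g$ is infinitely tangent to $L$ (equivalently, after the reduction, $g$ is infinitely tangent to the identity) at $0$.

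Finally, to upgrade infinite tangency to actual equality near $0$: with $g$ now $C^\infty$-tangent to the identity at $0$ and commuting with the contraction $B$, consider any $v\neq 0$ small and iterate, $g(v)=B^{-n}\big(g(B^n v)\big)$. As $n\to\infty$, $B^n v\to 0$ and, by the infinite tangency, $g(B^nv)-B^nv = o(|B^nv|^N)$ for every $N$; applying $B^{-n}$ (which expands by $|\beta|^{-n}$) gives $|g(v)-v| = |B^{-n}(g(B^nv)-B^nv)| \le |\beta|^{-n}\, o(|\beta|^{n}\,|v|)^N$, and choosing $N\ge 2$ this tends to $0$. Hence $g(v)=v$ for all small $v$, so the original $g$ equals $L$ on a neighborhood of $0$, which is the claimed conformal linear map.

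The main obstacle is the middle step: making the homogeneous-degree comparison clean enough to conclude $P_k=0$, i.e.\ checking that the linear operator $P\mapsto B^{-1}\circ P\circ B$ on vector-valued degree-$k$ polynomials has no eigenvalue $1$ when $|\beta|\neq 1$ and $k\ge 2$. This is where the hypotheses $\beta\neq 1$ (contraction, not isometry) and $k\ge 2$ are essential; the angle condition $b\neq 0,\pi$ is used only at the level of $D_0g$, to force conformality of the linear part. An equivalent and perhaps cleaner route to the same conclusion is to invoke Sternberg-type linearization: a $C^\infty$ contraction germ with non-resonant linear part is smoothly linearizable, and a conformal linear map with $|\beta|\neq 1$ has no resonances; conjugating both $g$ and $B$ into linear normal form reduces the problem to the purely linear commutant computation of the first paragraph. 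I would present the elementary iteration argument as the main line and mention the Sternberg alternative as a remark.
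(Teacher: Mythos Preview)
Your argument is correct, and in fact the paper does not supply a proof of this lemma at all --- it is stated as an ``easy lemma'' and left to the reader --- so there is nothing to compare against.

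That said, your proof can be considerably shortened: the entire middle step (showing infinite tangency by killing the homogeneous terms $P_k$) is unnecessary, and with it the ``main obstacle'' you worry about disappears. Once you have reduced to $g(0)=0$ and $D_0g=\mathrm{Id}$, the $C^1$ estimate $g(w)-w=o(|w|)$ already suffices in your final iteration: writing $\varepsilon(w)=(g(w)-w)/|w|\to 0$ as $w\to 0$, the commutation relation gives
\[
|g(v)-v|=|B^{-n}(g(B^nv)-B^nv)|=|\beta|^{-n}\,|B^nv|\,|\varepsilon(B^nv)|=|v|\,|\varepsilon(B^nv)|\xrightarrow[n\to\infty]{}0,
\]
so $g(v)=v$ for every $v$ in the domain. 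In particular you never need to assume $g$ is smooth, nor invoke Sternberg linearization; the lemma holds for $C^1$ local diffeomorphisms, which is what the applications in the paper actually require (elements of the $C^1$ centralizer). The hypothesis $b\neq 0,\pi$ is used exactly where you say --- to force $D_0g$ to be conformal --- and $|\beta|\neq 1$ is used only to make $B^n v\to 0$.
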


\subsection{Vanishing of the Mather invariant}\label{ss.vanish2}
This part is now very close to the $1$-dimensional case: we consider
$f\in D_{A,B,n}$, a disk $D^S_f$ centered on $S$ on which $f=B$
and want to perturb the homeomorphism $\Delta_f$.

Let $h\colon S^2\to S^2 $ be a diffeomorphism whose support is contained in a disk $D\subset D^S_f$, disjoint from all $B^m(D)$ for $m>0$.
The disk $D$ projects homeomorphically onto a disk  $D'\subset T_B$, and finally onto a disk $\tilde D=\cL_{B,n}(D')\subset T^2$. Let $\psi$ be the
diffeomorphism of $T^2$ with support in $\tilde D$ whose restriction to $\tilde D$ is the projection of $h$.  We says that $\psi$ is the projection of
$h$ on $T^2$ and conversely, that $h$ is the lift of $\psi$ with support in $D$.

Fix $k>0$ such that $D$ is disjoint from $B^k(D^S_f)$.

\begin{lemm}\label{l.compo} Consider a disk $D$ and a diffeomorphism $h$ as above.
Fix $k>0$ such that $D$ is disjoint from $B^k(D^S_f)$.
Then,  $f\circ h$ is a diffeomorphism in $D_{A,B,n}$
with $B^k(D^S_f)\subset D^S_{f\circ h}$, and whose
Mather invariant is $$\De_{f\circ h}= \psi\circ \De_f.$$
\end{lemm}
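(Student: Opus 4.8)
The plan is to mimic, almost verbatim, the one-dimensional argument of Lemma~\ref{l.composition} and Corollary~\ref{c.composition}, transporting everything through the projection $\pi_f$. First I would check that $f\circ h$ still lies in $\cO$ and in $D_{A,B}$: since $\mathrm{supp}(h)\subset D\subset D^S_f$ and $D$ is disjoint from $B^k(D^S_f)$, the diffeomorphism $f\circ h$ agrees with $f$ (hence with $A$) near $N$, and agrees with $f$ (hence with $B$) on $B^k(D^S_f)$, a neighborhood of $S$; so $f\circ h\in D_{A,B}$ and we may legitimately take $D^S_{f\circ h}\supset B^k(D^S_f)$. The nonwandering set is still $\{N,S\}$ because outside a neighborhood of $S$ the map is unchanged and on $D^S_f$ we still have $f\circ h(D^S_f)\subset D^S_f$ with $S$ the only fixed point, so $\cO$-membership persists.

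Next I would verify $n(f\circ h)=n(f)$, so that $D^S_{f\circ h}$ lies in the same open piece $D_{A,B,n}$ and the formula $\De_{f\circ h}=\cL_{B,n}\circ\pi_S\circ\pi_N^{-1}\circ\cL_{A,0}^{-1}$ makes sense with the \emph{same} $n$. This follows either from the previous lemma (the isotopy $t\mapsto f\circ h_t$, where $h_t$ is an isotopy of $h$ to the identity supported in $D$, stays in $D_{A,B}$, so $n$ is constant), or directly: the homology data ($\sigma$ and the intersection form) used to define $n$ are unaffected by composing with a diffeomorphism supported in a disk.

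Now the heart of the argument, which is the step I expect to require the most care: identifying the new orbit space $T_{f\circ h}$ with $T_f$ and computing how $\pi_S$ changes. Here is the mechanism. Both $f$ and $f\circ h$ agree with $A$ near $N$, so the identification $\pi_N$ and the projection $\pi_N^{-1}\circ\cL_{A,0}^{-1}$ coming from the source side are literally unchanged; the Mather invariant of $f\circ h$ differs from that of $f$ only through the sink-side identification $\pi_S$. Given $x$ in the unstable side near $N$, push it forward by $f$ (equivalently by $f\circ h$, since near $N$ they coincide) until the orbit lands in $D$ — say $f^m(x)\in D$, with $f^{m+j}(x)$ avoiding $\mathrm{supp}(h)$ for all $j\geq 1$ because $D$ is disjoint from all $B^{j}(D)$, $j>0$. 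For the map $f$ the point $f^m(x)$ then projects to $\De_f$ applied to the class of $x$ (read in $T^2$ via $\cL_{B,n}$). For $f\circ h$, the corresponding orbit point is $(f\circ h)^{m+1}(x)=f\circ h\circ f^m(x)=f(h(f^m(x)))$, whose $T^2$-projection is exactly $\psi$ applied to the $T^2$-projection of $f^{m+1}(x)$, since $\psi$ was defined as the projection of $h$ through $\cL_{B,n}\circ\pi_S$. Because $f$ and $f\circ h$ agree on all forward iterates $B^j(D)$, $j\geq 1$ — these are the regions the orbit passes through on its way to $S$ — the remaining iterations contribute the same diffeomorphism in both cases. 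Hence the sink-side projection for $f\circ h$ is $\psi$ post-composed with the sink-side projection for $f$, giving $\De_{f\circ h}=\psi\circ\De_f$. The one genuinely delicate point is bookkeeping the fact that $D$ is disjoint from \emph{all} $B^m(D)$ with $m>0$, which is what guarantees $h$ is applied exactly once along each orbit and that the ``tail'' of the orbit after passing through $D$ is governed by $B$ alone; this is the exact analogue of the condition $a_{i+1}>a_i+1$ in Corollary~\ref{c.composition}, and it is what makes the projection of $\theta_{f\circ h}^{m+k}(x)$ stabilize to $\psi\circ\De_f(x)$ for all large $k$.
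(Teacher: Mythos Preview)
Your proposal is correct and is exactly the argument the paper has in mind: the paper gives no separate proof of Lemma~\ref{l.compo}, opening Section~\ref{ss.vanish2} with ``This part is now very close to the $1$-dimensional case,'' so the intended proof is precisely the transcription of the proof of Lemma~\ref{l.composition} that you carry out.

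One small wording issue: when you write ``push it forward by $f$ \dots\ until the orbit lands in $D$'' and later ``$h$ is applied exactly once along each orbit,'' you are implicitly assuming every $f$-orbit meets $D$. That is not the case here --- $D$ projects only to a disk $\tilde D\subsetneq T^2$, unlike the $1$-dimensional situation where $(a,a+1)$ is a full fundamental domain. For orbits that never enter $D$ the conclusion is immediate: $f\circ h$ and $f$ agree on the entire orbit, and $\psi$ is the identity outside $\tilde D$, so $\De_{f\circ h}$ and $\psi\circ\De_f$ coincide there trivially. Just insert that one-line case distinction and the proof is complete.
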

\begin{coro}\label{c.compo} Let $D_0,\dots, D_\ell\subset D^S_f$ be a finite sequence of disks such that
\begin{itemize}
\item for every $i$, the disk $D_i$ is disjoint from $B^k(D_i)$ for $k>0$;
\item for all $i<j$ the disk $D_i$ is disjoint from $B^k(D_j)$, $k\geq 0$
\end{itemize}
For every $i$, let $h_i$ be a diffeomorphism of $S^2$ with support in
$D_i$, and let $\psi_i$ be the projection of $h_i$ on $T^2$ (by $\cL_{B,n}\circ\pi_S\circ\pi_f$).

Then the Mather invariant of $f\circ h_0\circ\dots\circ h_\ell$ is
$$\De_{f\circ h_0\circ\dots\circ h_\ell}= \psi_\ell\circ\dots\circ \psi_0\circ \De_f.$$
\end{coro}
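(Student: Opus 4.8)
The plan is to derive Corollary~\ref{c.compo} from Lemma~\ref{l.compo} by a straightforward induction on $\ell$, exactly as Corollary~\ref{c.composition} was derived from Lemma~\ref{l.composition} in the one-dimensional case. The base case $\ell=0$ is precisely Lemma~\ref{l.compo} applied to $h_0$ with support in $D_0$. For the inductive step, write $g=f\circ h_0\circ\cdots\circ h_{\ell-1}$; by the induction hypothesis $g\in D_{A,B,n}$ and $\De_g=\psi_{\ell-1}\circ\cdots\circ\psi_0\circ\De_f$, and moreover, iterating the conclusion of Lemma~\ref{l.compo}, one has $B^{k_{\ell-1}}(D^S_f)\subset D^S_g$ for an appropriate positive power; the point is that after composing with the $h_i$'s the new region on which the diffeomorphism coincides with $B$ has merely shrunk to a smaller $B$-invariant disk around $S$, and by the disjointness hypothesis on the $D_i$'s one can still arrange a disk $D^S_g$ containing $D_\ell$ and disjoint from its forward $B$-iterates.

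The key technical point to verify is that the disjointness hypotheses in the statement are exactly what is needed to keep applying Lemma~\ref{l.compo}. Concretely, the hypothesis ``for all $i<j$ the disk $D_i$ is disjoint from $B^k(D_j)$ for $k\ge 0$'' guarantees that when we reach stage $j$, the disk $D_j$ still lies inside a disk $D^S_{g}$ (for $g=f\circ h_0\circ\cdots\circ h_{j-1}$) on which $g$ coincides with $B$: indeed $g$ differs from $f$ only on $\bigcup_{i<j}\bigcup_{m\ge 0}B^{-m}(D_i)$ pushed forward, i.e. on a set disjoint from $D_j$ and its forward $B$-orbit, so one may take $D^S_g$ to be a small $B$-forward-invariant neighborhood of $S$ containing $D_j$. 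Then Lemma~\ref{l.compo} applies to $g\circ h_j$ with this choice of $D^S_g$ and the disk $D_j$, yielding $g\circ h_j\in D_{A,B,n}$ and $\De_{g\circ h_j}=\psi_j\circ\De_g$, where $\psi_j$ is computed via $\cL_{B,n}\circ\pi_S\circ\pi_f$ — and here one uses that this projection is unchanged because $g\circ h_j$ still agrees with $B$ near $S$ and with $A$ near $N$, so the Mather invariant of each intermediate diffeomorphism is measured using the same identifications $\cL_{A,0}$, $\cL_{B,n}$.

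Composing these identities telescopically gives
$$\De_{f\circ h_0\circ\cdots\circ h_\ell}=\psi_\ell\circ\De_{f\circ h_0\circ\cdots\circ h_{\ell-1}}=\cdots=\psi_\ell\circ\psi_{\ell-1}\circ\cdots\circ\psi_0\circ\De_f,$$
which is the desired formula.

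I expect the only real subtlety to be bookkeeping: making sure that at each stage the ``southern'' region $D^S_{g}$ on which the current diffeomorphism equals $B$ can be chosen large enough to contain the next perturbation disk $D_j$ while remaining disjoint from the previously used disks and their $B$-orbits. This is guaranteed precisely by the two bulleted disjointness conditions, so no new geometric input is needed beyond Lemma~\ref{l.compo}; the argument is a clean induction once that invariant (``$g$ equals $B$ on a forward-invariant neighborhood of $S$ containing $D_j,\dots,D_\ell$'') is set up and carried along. No further genuine obstacle is anticipated.
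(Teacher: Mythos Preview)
Your proposal is correct and matches the paper's approach: the paper states Corollary~\ref{c.compo} immediately after Lemma~\ref{l.compo} with no proof at all, treating it as the obvious iteration of the lemma (exactly parallel to how Corollary~\ref{c.composition} follows from Lemma~\ref{l.composition} in the one-dimensional case). Your inductive argument, together with the observation that the disjointness hypotheses guarantee that at stage $j$ the disk $D_j$ and its forward $B$-orbit lie in the region where $g=f\circ h_0\circ\cdots\circ h_{j-1}$ still coincides with $B$, is precisely the intended elaboration.
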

\medskip

Reciprocally, for any disk $\tilde D\subset T^2$ with diameter strictly
less than $1$, each connected component of
$(\cL_{B,n}\circ\pi_S\circ\pi_f)^{-1}(\tilde D)$ projects
diffeomorphically onto $\tilde D$, and $f$ induces a permutation of
these components. For $i>0$, let $D_i$ denote the (unique) component
of $(\cL_{B,n}\circ\pi_S\circ\pi_f)^{-1}(\tilde D)$ such that $
f^{-i}(D_i)\subset D^S_f$ but $f^{-(i+1)}(D_i)$ is not contained in $D^S_f$.
For any diffeomorphism $\psi$ with support in $\tilde D$  we will denote
by  $\theta_i(\psi)\colon S^2\to S^2$ the lift of $\psi$ with support in $D_i$.
\medskip

The next lemma is the unique reason we required that the derivative of $f$ at $N,S$ be complex, hence conjugate to conformal linear maps:

\begin{lemma} Let $\tilde D\subset T^2$ be a disk with diameter
  strictly less than $1$ and let $i,j\in\NN$. Then:
$$ \sup_{x\in S^2 }\|D_x\theta_i(\psi)-Id\| =\sup_{x\in S^2}\|D_x\theta_j(\psi)-Id\|.$$
\end{lemma}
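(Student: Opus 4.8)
The plan is to exhibit $\theta_j(\psi)$ as a conjugate of $\theta_i(\psi)$ by an iterate of the conformal linear map $B$, and then to invoke the elementary fact that conjugation by a conformal linear map leaves the quantity $\sup_x\|D_x(\cdot)-Id\|$ unchanged. This is the exact two-dimensional analogue of the proof of Lemma~\ref{l.voisinages2}, with $B$ playing the role that the homothety $h_\beta$ played there; it is precisely here that the hypothesis that $Df$ has a non-real eigenvalue at $N$ and $S$ --- so that $B$ is conformal linear --- gets used.

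Write $P=\cL_{B,n}\circ\pi_S\circ\pi_f$, so that $D_i$ and $D_j$ are connected components of $P^{-1}(\tilde D)$, each mapped diffeomorphically onto $\tilde D$ by $P$, and $\theta_i(\psi)$ equals $(P|_{D_i})^{-1}\circ\psi\circ P|_{D_i}$ on $D_i$ and the identity elsewhere (and similarly for $j$). Since $\pi_f$ is the projection onto the orbit space $(S^2\setminus\{N,S\})/f$, we have $P\circ f=P$. We may assume $j\ge i$, the case $j<i$ being symmetric and $j=i$ trivial. The first step is to check, directly from the defining property of $D_i$ (namely $f^{-i}(D_i)\subset D^S_f$ while $f^{-(i+1)}(D_i)\not\subset D^S_f$), that $f^{-(j-i)}(D_j)$ is a component of $P^{-1}(\tilde D)$ having that same defining property; by uniqueness this gives $D_j=f^{j-i}(D_i)$. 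Since $f=B$ on $D^S_f$ and $B$ maps $D^S_f$ into itself, and since $D_i\subset D^S_f$ (another consequence of $f^{-i}(D_i)\subset D^S_f$ and this invariance), the discs $D_i,f(D_i),\dots,f^{j-i}(D_i)=D_j$ all lie in $D^S_f$; hence $f^{j-i}=B^{j-i}$ on $D_i$ and $f^{-(j-i)}=B^{-(j-i)}$ on $D_j$.

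Combining $P\circ f=P$ with $D_j=B^{j-i}(D_i)$ gives $P|_{D_j}\circ B^{j-i}|_{D_i}=P|_{D_i}$, and unwinding the definitions of $\theta_i(\psi)$ and $\theta_j(\psi)$ then yields, for every $x\in D_j$,
$$\theta_j(\psi)(x)=B^{j-i}\bigl(\theta_i(\psi)(B^{-(j-i)}(x))\bigr).$$
Differentiating, and using that $B^{\pm(j-i)}$ is linear, one obtains for $x\in D_j$ and $x'=B^{-(j-i)}(x)\in D_i$
$$D_x\theta_j(\psi)-Id=B^{j-i}\,\bigl(D_{x'}\theta_i(\psi)-Id\bigr)\,B^{-(j-i)}.$$
Now $B^{j-i}=h_{\beta^{j-i}}\circ R_{(j-i)b}=\lambda R$ with $\lambda=\beta^{j-i}>0$ a scalar and $R$ an orthogonal matrix, so conjugation by $B^{j-i}$ preserves the operator norm; hence $\|D_x\theta_j(\psi)-Id\|=\|D_{x'}\theta_i(\psi)-Id\|$. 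Since $x\mapsto\|D_x\theta_i(\psi)-Id\|$ vanishes outside $D_i$ and likewise $x\mapsto\|D_x\theta_j(\psi)-Id\|$ vanishes outside $D_j$, taking suprema and using that $B^{-(j-i)}$ maps $D_j$ bijectively onto $D_i$ gives $\sup_{x\in S^2}\|D_x\theta_j(\psi)-Id\|=\sup_{x\in S^2}\|D_x\theta_i(\psi)-Id\|$.

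The only slightly delicate point is the combinatorial bookkeeping --- deducing $D_j=f^{j-i}(D_i)$ from the indexing convention, and checking that the lift operation $\theta_\bullet$ is intertwined with $f$ via $P\circ f=P$ (equivalently $\theta_j(\psi)=f^{j-i}\circ\theta_i(\psi)\circ f^{-(j-i)}$). Once this is in place the norm identity is immediate, since $B^{j-i}$ is a scalar multiple of an orthogonal matrix.
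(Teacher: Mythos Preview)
Your proof is correct and follows exactly the same idea as the paper's: $\theta_i(\psi)$ and $\theta_j(\psi)$ are conjugate by $B^{j-i}$, which is the composition of a homothety with a rotation, and such conjugation preserves $\sup_x\|D_x(\cdot)-Id\|$. The paper states this in three lines, whereas you have carefully filled in the bookkeeping (that $D_j=B^{j-i}(D_i)$ from the indexing convention, and that the relevant iterates stay inside $D^S_f$ so that $f^{j-i}=B^{j-i}$ there); this extra care is entirely appropriate and does not constitute a different approach.
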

\begin{proof} $\theta_i(\psi)$ is conjugated to $\theta_j(\psi)$ by $B^{j-i}$, which is the composition of a homothety
with a rotation; the $C^1$-norm is preserved by conjugacy by
isometries, and by homotheties, and hence is preserved by the
conjugacy by $B^{j-i}$.
\end{proof}

\begin{coro}\label{c.varep} For any $\varepsilon>0$ there is a $C^1$-neighborhood $\cV_\varepsilon\subset \Diff(T^2)$ of the identity
map such that for any diffeomorphism $\psi\in\cV_\varepsilon$ with support in a disk $\tilde D\subset T^2$ with diameter strictly less than
$1$, and for any $i\geq 0$, the lift $\theta_i(\psi)$ satisfies :
$$ \sup_{x\in S^2}\|D_x\theta_i(\psi)-Id\| <\varepsilon.$$
\end{coro}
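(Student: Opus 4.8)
The plan is to use the preceding lemma to reduce the estimate to a single value of $i$, then to show that for that value the disk $D_i$ always lies in one \emph{fixed} compact region of $D^S_f\setminus\{S\}$, no matter where $\tilde D$ sits, and finally to invoke uniform continuity of $P:=\cL_{B,n}\circ\pi_S\circ\pi_f$ on that region. First I would apply the preceding lemma: for any $\psi$ supported in a disk $\tilde D$ of diameter $<1$, the quantity $\sup_{x\in S^2}\|D_x\theta_i(\psi)-Id\|$ is independent of $i$, so it is enough to bound it for one fixed value, say $i=1$. Recall that $\theta_1(\psi)$ is the identity outside $D_1$, that $D_1\subset D^S_f$, and that on $D^S_f\setminus\{S\}$ the map $P$ is just the quotient by the $B$-action followed by the fixed affine identification $\cL_{B,n}$ (since $f=B$ there); thus $\theta_1(\psi)=(P|_{D_1})^{-1}\circ\psi\circ(P|_{D_1})$ on $D_1$, where $P|_{D_1}\colon D_1\to\tilde D$ is a diffeomorphism.

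Next I would locate $D_1$. Working in the conformal chart $(x,y)$ near $S$ in which $f=B=R_b\circ h_\beta$ and passing to the coordinates $(u,\phi)=(\log r,\arg(x,y))$, with $r^2=x^2+y^2$, the contraction $B$ acts by the translation $(u,\phi)\mapsto(u+\log|\beta|,\phi+b)$, so in these coordinates $P$ becomes \emph{locally affine with a fixed linear part} $L$. Since $\tilde D$ has diameter $<1$ and (as recalled above) each component of $P^{-1}(\tilde D)$ projects diffeomorphically onto $\tilde D$, the $u$-extent of $D_1$ is bounded by a constant $C_1=\|L^{-1}\|$ depending only on $\cL_{B,n}$. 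On the other hand, unwinding the definition of $D_1$ gives $D_1\subset B(D^S_f)$ but $D_1\not\subset B^2(D^S_f)$, so $D_1$ reaches the outer part of $B(D^S_f)$; together with the bound on its $u$-extent this forces $D_1$ to lie in a fixed compact annulus $\cR\subset D^S_f\setminus\{S\}$ depending only on $f,B,n,D^S_f$. On the compact set $\cR$ the map $P$ is smooth, with bounded $C^2$-norm and Jacobian bounded away from $0$; hence, uniformly over the placement of $\tilde D$, the diffeomorphisms $P|_{D_1}$ and $(P|_{D_1})^{-1}$ have $C^1$-norms and $C^2$-norms bounded by a constant depending only on $f,B,n,D^S_f$.

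A routine computation — apply the chain rule to $\theta_1(\psi)=(P|_{D_1})^{-1}\circ\psi\circ(P|_{D_1})$, subtract the identity (the case $\psi=Id$), and use the uniform bounds just obtained — then produces a constant $C=C(f,B,n,D^S_f)$ with $\sup_{x\in S^2}\|D_x\theta_1(\psi)-Id\|\le C\,\|\psi-Id\|_{C^1}$ for every $\psi$ supported in a disk of diameter $<1$. It therefore suffices to set
\[
\cV_\varepsilon:=\bigl\{\psi\in\Diff(T^2)\ :\ \|\psi-Id\|_{C^1}<\varepsilon/C\bigr\},
\]
a $C^1$-neighborhood of the identity map: for $\psi\in\cV_\varepsilon$ with support in any disk of diameter $<1$ and any $i\ge 0$, the preceding lemma gives $\sup_{x\in S^2}\|D_x\theta_i(\psi)-Id\|=\sup_{x\in S^2}\|D_x\theta_1(\psi)-Id\|\le C\,\|\psi-Id\|_{C^1}<\varepsilon$. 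The only genuinely non-formal step is the uniform confinement of $D_1$ to a fixed compact subset of $D^S_f\setminus\{S\}$; this is exactly where the hypotheses that $\tilde D$ has diameter $<1$ and that $f$ is conformal near $S$ (so that $P$ becomes affine in logarithmic coordinates) are used.
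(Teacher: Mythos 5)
Your proposal is correct, and it matches the approach the paper intends. The paper states Corollary~\ref{c.varep} without proof, treating it as an immediate consequence of the preceding lemma; but, as you correctly observe, the lemma only reduces the estimate to a single value of $i$, and one still needs a bound that is uniform over the placement of the disk $\tilde D$ in $T^2$. The argument you supply for this uniformity — in log-radial coordinates the projection $P=\cL_{B,n}\circ\pi_S\circ\pi_f$ becomes affine with a fixed linear part, the diameter-$<1$ hypothesis caps the radial extent of $D_i$, the defining condition $D_1\subset B(D^S_f)$, $D_1\not\subset B^2(D^S_f)$ pins $D_1$ to a fixed compact annulus, and $P$ has uniformly bounded $C^2$-data there — is precisely the two-dimensional analogue of the paper's own proof of Lemma~\ref{l.voisinages2} in the circle case (where one reduces to $a\in[K_f,K_f+1]$ and then invokes boundedness of the derivatives of $\varphi,\varphi^{-1}$ on a compact set). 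So you have not taken a different route; you have written out the argument the paper leaves implicit, and done so correctly, including the chain-rule/mean-value bookkeeping needed to get the linear bound $\sup_x\|D_x\theta_1(\psi)-Id\|\le C\,\|\psi-Id\|_{C^1}$ (for which one does need the $C^0$ part of the $C^1$-norm of $\psi-\mathrm{Id}$, as your definition of $\cV_\varepsilon$ correctly provides).
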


\begin{defi} Let $\psi_1,\dots,\psi_\ell$ be $\ell$ diffeomorphisms of
$T^2$ such that the support of every $\psi_i$ is contained in a disk
$\tilde D_i$ with diameter strictly less than $1$;  a {\em lift of the sequence $\psi_1,\dots,\psi_\ell$} is a sequence of lifts
$h_1= \theta_{i_1}(\psi_1),\dots,h_\ell= \theta_{i_\ell}(\psi_\ell)$ such that, for every $i<j$ the support of $h_i$ is disjoint
from all the iterates $B^k(supp(h_j))$, for $k\geq 0$.
\end{defi}

It is straightforward to check that, for any sequence $\psi_1,\dots,\psi_\ell$ of diffeomorphisms of $T^2$ such that the support of every $\psi_i$ is
contained in a disk $\tilde D_i$ with diameter strictly less than $1$, the sequence $h_i=\theta_{i}(\psi_i)$ is a lift.
\bigskip

\begin{proof}[\noindent Proof of Theorem~\ref{t=sphere}]
Consider  $f\in D_{A,B,n}$ and a $C^1$-neighborhood $\cU$ of $f$. Fix
$\varepsilon>0$ such that, if $g_1,\dots,g_m$, $m>0$, are
diffeomorphisms of $S^2$ with pairwise disjoint supports in
$S^2\setminus \{N,S\}$, and such that
$ \sup_{x\in S^2} \|Dg_i(x)-Id\| <\varepsilon$, then $f\circ
g_1\circ\cdots\circ g_m\in\cU$. Let $\cV_\varepsilon$ be the
$C^1$-neighborhood
of the identity map of $T^2$ given by Corollary~\ref{c.varep}.

Using Theorem~\ref{t.decomposition}, we write
$$\De_f=\psi_1^{-1}\circ\cdots\circ \psi_\ell^{-1},$$
for some $\ell>0$, where $\psi_i\in \cV_\varepsilon$, and
the support of $\psi_i$ is contained in a disk $\tilde D_i$
with diameter strictly less
than $1$.
Let $(h_1,\dots,h_\ell)$ be a lift of the sequence
$(\psi_1,\dots,\psi_\ell)$; the $h_i$ satisfy
$$ \sup_{x\in S^2} \|D_xh_i-Id\| <\varepsilon,$$
by our choice of $\cV_\varepsilon$.

Our choice of $\varepsilon>0$ implies that
$g=f\circ h_1\circ\cdots\circ h_\ell$ is a diffeomorphism belonging
to $D_{A,B,n}\cap\cU$. Furthermore, its Mather invariant is
$\De_g=\psi_\ell\circ\cdots\circ\psi_1\circ \De_f=Id$.

We have just shown that any $f\in D_{A,B}$ is the $C^1$-limit of a
sequence $g_k\in D_{A,B}$ whose Mather invariant is the identity map;
in particular, the centralizer of $g_k$ is isomorphic to $\RR\times S^1$.

Since by Proposition~\ref{p.preparation}, $\cO$ contains a dense set
of diffeomorphisms smoothly conjugate to elements of $D_{A,B,n}$,
any diffeomorphism in $\cO$ is the limit of diffeomorphisms $g_k$
that are the time $1$ map of Morse-Smale vector fields, ending the
proof of Theorem~\ref{t=sphere}.
\end{proof}

\section{Local embeddability in a flow on surfaces}\label{s.surface}

The aim of this section is to prove Theorem~\ref{t.surface}:
we consider a closed connected surface $S$, and let $\cO\subset \Diff^1(S)$ be the set of Morse-Smale diffeomorphisms
as defined in the statement of Theorem~\ref{t.surface}.
Since Morse-Smale systems are structurally stable,
we have that  $\cO$ is a non-empty open subset of $\Diff^1(S)$.

Let $\cD_1\subset \cO$ be the dense subset of $\cO$ such that for every $f\in\cD_1$ one has:
\begin{itemize}
\item every fixed point $x$ of $f$ admits a neighborhood $U_x$ and smooth local coordinates defined
on $U_x$ such that the expression of $f$ in restriction to $U_x\cap f^{-1}(U_x)$ is linear
(hence coincides with the derivative $Df(x)$);
\item given any two fixed points $x,y$ of $f$, one has $\det Df(x)\neq \det Df(y)$.
\end{itemize}

The eigenvalues of every sink or source $q$ of a diffeomorphism
$f\in\cD_1$  are non-real. Hence we can choose the local coordinates
on $U_q$ in such a way that the restriction of $f$ to $U_q\cap f^{-1}(U_q)$
is a conformal linear map (i.e. the composition of a rotation with a homothety).
In the same way we can choose local
coordinates around any saddle $p$ so that $f\mid_{U_p\cap f^{-1}(U_p)}$
is described by a diagonal matrix.

\begin{prop}\label{p.surface} Given any $f\in \cD_1$ and any $C^1$-neighborhood $\cU$ of $f$, there is $g\in\cU$ such that $g$
is the time one map of a smooth vector field and agrees with $f$
outside an arbitrarily small neighborhood of
the sinks and sources of $f$.\\
Furthermore, the centralizer $C(g)$ of $g$ is isomorphic to $\RR$.
\end{prop}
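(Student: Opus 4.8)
\emph{Proposed proof.} The plan is to follow the sphere construction of Section~\ref{ss.sphere}, localized near each sink and source of $f$, the saddles contributing no obstruction because there the generator of $f$ is forced.

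First I would record the local models. For $f\in\cD_1$, near a saddle $p$ the diffeomorphism $f$ is, in suitable coordinates, the diagonal linear map with positive eigenvalues $0<\mu_p<1<\lambda_p$, hence is the time one map of the linear field $X_p=\ln\lambda_p\, x\,\partial_x+\ln\mu_p\, y\,\partial_y$; moreover $X_p$ is the \emph{unique} germ of vector field at $p$ with time one map $f$ (uniqueness of the generator of a hyperbolic germ). Near a sink or source $q$, $f$ coincides with a conformal linear map $B_q=R_{b_q}\circ h_{\beta_q}$, so for every $n\in\ZZ$ it is the time one map of the conformal field $X_{B_q,n}$ of Section~\ref{ss.spaceorbits}.

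Next I would build the forced vector field away from the sinks and sources. Let $\Sigma\subset S$ be the union of the stable and unstable manifolds of the saddles of $f$; since $f$ has no heteroclinic orbit, $\Sigma$ is a finite union of arcs, each joining a source to a saddle or a saddle to a sink. Pushing $X_p$ forward by the iterates of $f$, using both that $X_p$ generates $f$ near $p$ and that $W^s(p)$ and $W^u(p)$ retract onto $p$ (so there is no monodromy), one obtains a unique smooth $f$-invariant vector field $X_0$, defined on an $f$-invariant neighborhood of $\Sigma$ minus the sinks and sources, with $X_0=X_p$ near each saddle and $f=X_0^{1}$. Since every point of $S$ lies in the closure of an $f$-invariant topological disc bounded by arcs of $\Sigma$ on which $f$ runs from a source to a sink, $X_0$ is in fact the unique candidate generator of $f$ on all of $S$ minus arbitrarily small $f$-invariant discs around the sinks and sources.

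Then, near each sink $s$ (sources are handled identically via $f^{-1}$), I would kill a Mather invariant. Fix a small disc $D_s$ on which $f=B_s$ and a smaller disc $D_s'\subset D_s$, and allow perturbations of $f$ only in the annulus $D_s\setminus D_s'$, keeping $f$ unchanged outside $\bigcup_s D_s$ and equal to $B_s$ on $D_s'$, so that $X_0$ is untouched. On $D_s'\setminus\{s\}$ the orbit space is the affine torus $T_{B_s}$ with its fields $Z$ and $X_{B_s,n}$; the finitely many saddle separatrices limiting to $s$ project there to finitely many disjoint simple closed curves, and on their complement $X_0$ descends via $f=X_0^1$ to a field which, for a suitable integer $n_s$ fixed by the homotopy/intersection computation of Section~\ref{ss.spaceorbits}, should equal the projection of $X_{B_s,n_s}$. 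The discrepancy is a Mather invariant $\Delta_s\in\Diff(T^2)$ isotopic to the identity and supported away from the projected separatrices; exactly as in Corollary~\ref{c.compo}, a composition $f\circ h_1\circ\cdots\circ h_\ell$, where the $h_i$ are lifts into the annulus of a fragmentation of $\Delta_s$ given by Theorem~\ref{t.decomposition}, multiplies $\Delta_s$ by $\psi_\ell\circ\cdots\circ\psi_1$, and choosing the $\psi_i$ to invert $\Delta_s$ makes the new invariant trivial. Because $B_s$ is a homothety composed with a rotation, these lifts are $C^1$-small (as in Corollary~\ref{c.varep}), so the resulting $g$ is $C^1$-close to $f$, and the triviality of $\Delta_s$ lets $X_0$ extend across $D_s$ as a $g$-invariant field with time one map $g$ and equal to $X_{B_s,n_s}$ near $s$. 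Doing this at every sink and source produces $g\in\cU$ that is the time one map of a smooth Morse-Smale vector field $X$ and agrees with $f$ off small neighborhoods of the sinks and sources.

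Finally the centralizer: the flow $\{X^t\}_{t\in\RR}$ lies in $C(g)$. Conversely $g$ still lies in $\cO$ and satisfies $\det Dg(x)\ne\det Dg(y)$ for distinct fixed points (the perturbation avoids the fixed points), so every $h\in C(g)$ fixes each fixed point, preserves orientation (because $g$ has a saddle) and preserves the separatrices; near a saddle $p$ the germ of $h$ commutes with the hyperbolic diagonal linear map $g$, hence preserves the two axes and on each is a diffeomorphism of $(0,\infty)$ commuting with a linear map, i.e.\ a time-$t$ map of $X_p$ for a single $t$; propagating this $t$ through the overlapping basins as in the proof of Proposition~\ref{p.centre-cercle} gives $h=X^t$, so $C(g)=\{X^t\}\cong\RR$. \emph{Expected main difficulty:} the third step — correctly setting up the orbit-space picture near a sink when finitely many saddle separatrices spiral in (the rotation part of $Df$ being nontrivial), pinning down the integers $n_q$, and checking that once $\Delta_s$ vanishes the field $X_0$ really glues to a \emph{smooth} $g$-invariant field on all of $S$ minus the fixed points with $g=X^1$; the saddle-local analysis, the $C^1$-smallness of the lifts, and the centralizer computation are routine given the one-dimensional and spherical cases.
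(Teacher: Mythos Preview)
Your construction of the vector field and the perturbation near sinks and sources follows essentially the paper's route (Propositions~\ref{p.saddle}, \ref{p.punctured}, \ref{p.perturb}). One presentational point: your Step~2 is vague about how $X_0$ passes from ``a neighborhood of the saddle skeleton'' to ``all of $S$ minus small discs around the sinks and sources''. The paper does this in two moves: first it extends the projected field on each torus $T_q$ to a global period-$1$ field there (Lemma~\ref{l.vectorus}), obtaining a complete field $Y$ on the whole punctured surface; then it compares $(\pi_q)_*Y$ with the linear field via a diffeomorphism $\psi_{f,q}$ of $T_q$ isotopic to the identity (Lemma~\ref{l.isotop}), and it is $\psi_{f,q}$ that gets fragmented and killed. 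Your ``$\Delta_s$ supported away from the projected separatrices'' is not quite the right object.

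The real gap is in the centralizer computation. You assert that any $h\in C(g)$ ``preserves the separatrices'', but this is not automatic. Since $h$ fixes every fixed point (by the determinant condition) and preserves orientation (this follows from the \emph{complex} eigenvalue at a sink via Lemma~\ref{l.local}, not from the existence of a saddle as you write), the derivative $Dh(p)$ at a saddle $p$ is diagonal in the eigenbasis of $Dg(p)$; but nothing rules out $Dh(p)=\operatorname{diag}(-a,-b)$ with $a,b>0$, which exchanges the two unstable separatrices of $p$ (and the two stable ones). If for every saddle the two unstable separatrices land in the same sink and the two stable ones in the same source, such an order-two $h$ can exist globally, and then $C(g)\cong\RR\times\ZZ/2\ZZ$, not $\RR$.

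The paper deals with this in Section~\ref{ss.equality}: Proposition~\ref{p.separatrice} shows only that an $h\in C(g)$ \emph{preserving one separatrix} lies in the flow, so the centralizer is a priori $\RR$ or $\RR\times\ZZ/2\ZZ$. To exclude the second case one introduces the invariant $\delta^u(p)=\tfrac12-|r_+(p)-r_-(p)|$ measuring how far the two projected separatrices in $T_q$ are from being antipodal, proves that a $\ZZ/2\ZZ$ symmetry forces $\delta^u(f)=0$ (Lemma~\ref{l.opposite}), and then makes a \emph{further} small perturbation of the vector field $X$---which can be localized in a thin annulus near a sink, hence is still supported in an arbitrarily small neighborhood of the sinks and sources---to force $\delta^u(g)\neq 0$. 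Without this extra step your $g$ may genuinely have centralizer $\RR\times\ZZ/2\ZZ$, so the second assertion of the proposition is not yet established.
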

Proposition~\ref{p.surface} clearly implies Theorem~\ref{t.surface}.
The proof of this proposition is the aim of the rest of
Section~\ref{s.surface}. In particular, up to Subsection~\ref{ss.endcontained}, we prove that $C(g)$
contains $\RR $, while in Subsection~\ref{ss.equality} we prove the equality.

We fix now a diffeomorphism $f\in \cD_1$ and and a neighborhood
$\cU$ of $f$. Let $\si_f$, $\alpha_f$ and $\omega_f$  denote the
sets of saddles, sources and sinks  of $f$, respectively.

\subsection{Vector field in a neighborhood of any saddle}

Recall that by assumption $\si_f$ is non-empty.
In this section, we shall build an invariant neighborhood of~$\si_f$ endowed with a flow
which will be our local model around saddles
(and their invariant manifolds).

\begin{prop}\label{p.saddle}
There exists an invariant open neighborhood $V_0$ and a vector field $Y_0$ on $V_0$ such that
\begin{itemize}
\item the flow of $Y_0$ is complete (i.e. defined from $-\infty$ to $+\infty$);
\item the diffeomorphism $f$ coincides with the time one map of $Y_0$ on $V_0$.
\end{itemize}
\end{prop}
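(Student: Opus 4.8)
The plan is to build the neighborhood $V_0$ and the vector field $Y_0$ piece by piece, working first near each individual saddle, then near the invariant manifolds of the saddles, and finally gluing. Recall that $f\in\cD_1$, so around each saddle $p\in\si_f$ there are smooth coordinates on $U_p$ in which $f$ is a diagonal linear map $\mathrm{diag}(\lambda_p,\mu_p)$ with $0<\mu_p<1<\lambda_p$ (all eigenvalues are positive by hypothesis). Such a linear map is the time one map of the linear vector field $Y_p=\ln(\lambda_p)\,x\,\partial/\partial x+\ln(\mu_p)\,y\,\partial/\partial y$, which is complete on all of $\RR^2$ and, in particular, on $U_p$. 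So near each saddle there is no difficulty: we have a local flow and $f$ is its time one map, and there is essentially a \emph{unique} such local vector field because $f|_{U_p}$ is linear hyperbolic.

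The first real step is to extend $Y_p$ along the invariant manifolds $W^s(p)$ and $W^u(p)$. Since there are no heteroclinic orbits (the last hypothesis of Theorem~\ref{t.surface}), $W^u(p)\setminus\{p\}$ consists of two orbits each of which limits onto a sink in $\omega_f$, and symmetrically $W^s(p)\setminus\{p\}$ limits onto sources in $\alpha_f$; distinct saddles have disjoint invariant manifolds. Thus a small tubular neighborhood $V_0$ of $\si_f\cup\bigcup_{p\in\si_f}(W^s(p)\cup W^u(p))$, minus the sinks and sources themselves, is a disjoint union of "cross" pieces, one per saddle. On the piece attached to $p$, define $Y_0$ by pushing $Y_p$ forward by iterates of $f$: for $x\in V_0$ pick $n$ with $f^{-n}(x)\in U_p$ (such $n$ exists since $V_0$ is built from $U_p$ and its forward/backward iterates near the invariant manifolds), and set $Y_0(x)=(f^n)_*(Y_p(f^{-n}(x)))$. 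Because $f|_{U_p}$ is the time one map of $Y_p$, this is independent of the choice of $n$, so $Y_0$ is well-defined, smooth, $f$-invariant, and $f$ is its time one map on $V_0$; moreover $V_0$ is $f$-invariant by construction.

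The main obstacle is making $V_0$ genuinely \emph{open}, invariant, and with a \emph{complete} flow, and handling the behavior near the sinks and sources where the invariant manifolds accumulate. The completeness claim needs care: a forward orbit of $Y_0$ starting in the unstable-manifold direction runs off toward a sink, and one must check the orbit stays in $V_0$ for all positive time and does not escape in finite time — here one shrinks $V_0$ transversally to the invariant manifolds so that it is forward-invariant near the sinks and backward-invariant near the sources, using that near a sink $f$ is a linear contraction and the relevant piece of $V_0$ is a shrinking neighborhood of an orbit of $W^u(p)$. A convenient way to organize this is to take $V_0=\bigcup_{n\in\ZZ}f^n(W_0)$ where $W_0$ is a sufficiently small "cross-shaped" neighborhood of $\si_f$ contained in $\bigcup_p U_p$, chosen so that $f(W_0\cap\{y\text{-small}\})\subset W_0$ and $f^{-1}(W_0\cap\{x\text{-small}\})\subset W_0$; then openness and invariance are automatic and completeness follows because along any orbit one can always find an iterate landing back in $W_0$ (in the linearizing chart of either a saddle, a sink, or a source). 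I expect the write-up to consist mainly of fixing these shrinking parameters carefully and verifying that the flow of $Y_0$ does not leave $V_0$; everything else is the routine "push forward the local linear vector field by $f$" construction already used for the circle and the sphere earlier in the paper.
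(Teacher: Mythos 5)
Your basic strategy --- extend the local linear vector field $Y_p$ at each saddle by pushing it forward under iterates of $f$ --- is indeed the paper's approach. But the step you dispose of with the phrase ``Because $f|_{U_p}$ is the time one map of $Y_p$, this is independent of the choice of $n$'' is exactly where the content of the proof lives, and as stated the justification is insufficient. If $f^{-n}(x)$ and $f^{-m}(x)$ both lie in $U_p$ with $m>n$, the two pushforwards agree only if the $f$-invariance of $Y_p$ can be applied along the whole orbit segment between them; that fails if the orbit exits $U_p$ and later returns to it. One must rule out escape-and-return, and the ``no heteroclinic orbits'' hypothesis is used for this --- but as a quantitative statement about orbits near a saddle, not merely as the qualitative observation that the separatrices limit onto sinks and sources.

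The paper makes this precise through property (*): for any saddles $p_1,p_2$ there are neighborhoods $B_{p_1},B_{p_2}$ such that no point of $S\setminus(U_{p_1}\cup U_{p_2})$ has its backward orbit meeting $B_{p_1}$ and its forward orbit meeting $B_{p_2}$. With $p_1=p_2=p$ this forbids escape-and-return; with $p_1\ne p_2$ it makes the neighborhoods attached to distinct saddles disjoint. Rather than a tubular neighborhood of the full invariant manifolds, the paper takes $V_p=\bigcup_{k\in\ZZ}f^k(\Delta_p)$ for a specific octagon $\Delta_p$ in the linearizing chart, cut off along a level set of a quantity invariant under the flow of $Y_p$, and chosen (via property (*)) so that the forward orbit of $f(\Delta_p)\setminus\Delta_p$ never meets $\Delta_p$ again. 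This makes the inductive pushforward definition of $Y_0$ on the pieces $f^k(\Delta_p)\setminus f^{k-1}(\Delta_p)$ consistent, and since $V_p$ is both $f$-saturated and flow-saturated, completeness comes for free. So the crux is not, as you suggest, completeness near the sinks and sources (those points are not even in $V_0$); it is the consistency of the pushforward, and your sketch leaves precisely that gap open. Your $W_0$-based construction could be made to work, but it would still need an argument of the property-(*) type to close it.
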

\begin{proof}
We will use the following property satisfied by Morse-Smale diffeomorphisms:
\begin{description}
\item[(*)] \quad {\em For any two saddles $p_1,p_2\in \sigma_f$, there exist neighborhoods $B_{p_1},B_{p_2}$ of $p_1$ and $p_2$, respectively,
such that there is no point $x\in S\setminus (U_{p_1}\cup U_{p_2})$ whose backward orbit intersects $B_{p_1}$
and whose forward orbit intersects $B_{p_2}$.}
\end{description}
Let $p\in \sigma_f$ be any saddle periodic point.
One may assume that in the local coordinates $(x,y)$ of $U_p$, the expression of the map $f$
is $(x,y)\mapsto (\lambda^u_p x, \lambda^s_py)$.
Since the eigenvalues $\lambda^u_p,\lambda^s_p$ are positive,
$f$ agrees in $U_p$ with the time one map of the vector field
$$
Y_p(x,y) = x \ln \lambda_p^u \frac{\partial}{\partial x} + y \ln \lambda_p^s \frac{\partial}{\partial y}.
$$
For $T>0$ large, we introduce the octagon $\Delta_p$ defined by the equations:
$$|x|<(\lambda^u_p)^{-T},\;|y|<(\lambda_p^s)^T,\;
\frac{\ln|x|^{-1}}{\ln \lambda^u_p}+\frac{\ln |y|^{-1}}{\ln (\lambda^s_p)^{-1}}>3T.$$
By property (*), the forward orbit of $f(\Delta_p)\setminus \Delta_p$ does not intersect $\Delta_p$.
Hence, one can extend the vector field $Y_p$ to $V_p=\cup_{k\in \ZZ}f^k(\Delta_p)$, so that it is
equal to $f_*^k(Y_p)$ on $f^k(\Delta_p)\setminus f^{k-1}(\Delta_p)$
and to $f_*^{-k}(Y_p)$ on $f^{-k}(\Delta_p)\setminus f^{-k+1}(\Delta_p)$ for any $k>0$.
The open set $V_p$ is invariant by the flow of $Y_p$, and by construction the restriction of $f$ to $V_p$
coincides with the time one map of $Y_p$.

Using (*) again, we deduce that if each domain $\Delta_p$ has been chosen small enough, then the open sets
$V_p$ are pairwise disjoint. Hence, one can define on the union $V_0=\bigcup_{p\in\sigma_f}V_p$
a vector field $Y_0$ as required that coincides with $Y_p$ on any $V_p$.
\end{proof}

\subsection{Vector field on the punctured surface $S\setminus (\omega_f \cup \alpha_f)$}\label{ss.punctured}
In this section we prove the following proposition:

\begin{prop}\label{p.punctured} There exists a vector field $Y$ defined
on the punctured surface $S\setminus(\omega_f \cup \alpha_f)$ such that:
\begin{itemize}
\item the flow of $Y$ is complete;
\item the diffeomorphism $f$ is the time one map of $Y$ on $S\setminus(\omega_f \cup \alpha_f)$.
\end{itemize}
\end{prop}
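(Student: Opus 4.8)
The plan is to build the vector field $Y$ on $S\setminus(\omega_f\cup\alpha_f)$ by patching together two kinds of local data: near the saddles we already have the invariant flow $Y_0$ on the open set $V_0$ provided by Proposition~\ref{p.saddle}, and away from the saddles the only obstruction to embedding a Morse--Smale surface diffeomorphism in a flow is of ``Mather type''. The strategy is to cover the punctured surface $S\setminus(\omega_f\cup\alpha_f)$ by $V_0$ together with ``tubular'' regions which are saturated, under $f$, fundamental domains of the heteroclinic-free dynamics, on each of which $f$ is conjugate to a translation and hence trivially embeds in a flow; then one checks that the locally defined vector fields agree on overlaps, or can be glued, because the diffeomorphism $f$ itself is the common time-one map and a time-one map determines its generating vector field uniquely on any region where it has one. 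Completeness of the flow of $Y$ will come for free from $f$-invariance: since $f$ is the time-one map of $Y$ and $f$ is a diffeomorphism of the whole (compact) sphere's complement of the sinks/sources, the orbit of any point under the flow can be pushed by powers of $f$ into a region where completeness is already known (as in the Claim inside the proof of Proposition~\ref{p.matherdef}).

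More concretely, the key steps, in order, are as follows. First, I would use the hypothesis that there are no heteroclinic orbits ($W^s(x)\cap W^u(y)=\emptyset$ for distinct saddles $x\neq y$) to decompose $S\setminus(\omega_f\cup\alpha_f)$ into the invariant neighborhood $V_0$ of the saddle set together with finitely many ``edge regions'': each point in the complement of $V_0$ wanders from a source (or from a saddle's unstable separatrix, which enters $V_0$) to a sink (or into $V_0$), so its whole orbit passes through $V_0$ or through a region disjoint from all saddles. Second, on a region $R$ disjoint from the saddles and not meeting $\omega_f\cup\alpha_f$, the restricted dynamics of $f$ is that of a translation on a product $(\text{interval or circle})\times\RR$, i.e. conjugate to the flow box situation, so $f|_R$ is the time-one map of an obvious complete vector field $Y_R$. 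Third — this is where property (*) from Proposition~\ref{p.saddle} and the linearity near fixed points really get used — I would check that on the overlaps $R\cap V_0$ the two candidate vector fields $Y_R$ and $Y_0$ coincide: near a saddle both are the linear (diagonal, positive-eigenvalue) vector field $x\ln\lambda^u\,\partial_x + y\ln\lambda^s\,\partial_y$, which is the unique vector field whose time-one map is $f$ there, and invariance propagates the equality along orbits. Fourth, I would assemble the $Y_R$ and $Y_0$ into a single smooth vector field $Y$ on $S\setminus(\omega_f\cup\alpha_f)$ and verify $f=Y^1$ globally there, again using the ``$f$-invariance spreads local identities'' principle.

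The main obstacle I expect is precisely the gluing/consistency at the boundaries between the saddle region $V_0$ and the edge regions, and along the unstable/stable separatrices of the saddles: one must be careful that the fundamental-domain construction that trivializes $f$ away from saddles is chosen compatibly with the octagons $\Delta_p$ and with property (*), so that no orbit ``re-enters'' a region in a way that would force two different vector-field values at the same point. A secondary technical point is smoothness of $Y$ at the separatrices and in the corners of the octagons; here the fact that $f$ is assumed linear near each fixed point (the definition of $\cD_1$) is what makes the generating vector field smooth across those loci, exactly as in Proposition~\ref{p.saddle}. Once the vector field $Y$ is in hand, completeness of its flow is a short argument: any flow line, followed for bounded time, can be translated by a suitable power of $f$ into $V_0$ (or into an edge region), where the flow is complete by construction, so $Y$ has no finite escape time on $S\setminus(\omega_f\cup\alpha_f)$.
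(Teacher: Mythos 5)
The central step in your plan rests on a claim that is simply false, and this is where the argument breaks: a time-one map does \emph{not} determine its generating vector field uniquely on a region where the dynamics is wandering. On a flow box (say $f(x,t)=(x,t+1)$ on $I\times\RR$), there are infinitely many distinct vector fields whose time-one map is $f$ --- they correspond to different ``reparametrizations'' of the orbit foliation, and this non-uniqueness is precisely what the Mather invariant of Section~\ref{s.flow} measures. Uniqueness \emph{does} hold in a small hyperbolic neighborhood of a saddle, but your edge regions $R$ are by construction disjoint from the saddles, so there is no point of $R$ at which you may invoke it; $f$-invariance then propagates nothing, because you cannot push an orbit in $R$ into the hyperbolic neighborhood while staying in $R$. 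Consequently the ``obvious'' $Y_R$ you select on each edge region has no reason to coincide with $Y_0$ on $R\cap V_0$, and your fourth step does not go through. The main obstacle is not, as you suggest, the choice of compatible fundamental domains or smoothness at corners; it is that the problem of matching $Y_R$ to $Y_0$ is a genuine extension problem, not a consistency check.

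The paper avoids this by never choosing a vector field on the edge regions independently. Instead, for each sink $q$ it passes to the orbit torus $T_q=(W^s(q)\setminus\{q\})/f$, pushes $Y_0$ forward to a vector field $(\pi_q)_*Y_0$ defined on a compact invariant subset of $T_q$ (the projection of $V\cap W^s(q)$), observes that all its orbits are closed of period $1$ in a fixed nonzero homology class, and then \emph{extends} this to a vector field $Q_q$ on all of $T_q$ with every orbit closed of period $1$ (Lemma~\ref{l.vectorus}); lifting $Q_q$ gives the vector field on the sink basin, agreeing with $Y_0$ where both are defined by construction. Coverage of the full punctured surface is then automatic since every point not in $V$ has a sink as $\omega$-limit. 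If you want to salvage a gluing-style argument, the correct formulation is: fix $Y_0$ on $V_0$ and prove that it extends (it is not unique, but some extension exists), rather than produce $Y_R$ \emph{ab initio} and hope for agreement.

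Your completeness argument (push forward by powers of $f$ into a region where the flow is already complete) is fine, and the observation that linearity of $f$ near fixed points handles smoothness is correct as far as it goes; these are not where the trouble lies.
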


Let $q\in\omega_f$ be a sink of $f$. In the local coordinates we
fixed on $U_q$, the expression for the diffeomorphism $f$ is given
by a conformal matrix $B=R_b\circ h_\beta$ with $b\in [0,2\pi)$ and
$0<\beta<1$. Hence the results of Subsection~\ref{ss.spaceorbits}
apply: $f$ is in the neighborhood of $q$ the time one map of each of
the vector fields $X_{B,n}$ for any $n\in \ZZ$. We also defined the
vector field $Z= 2\pi(x\frac\partial{\partial
y}-y\frac\partial{\partial x})$.

Denote by $\pi_q$ the projection of $W^s(q)\setminus \{q\}$ on the orbit space
$T_q= (W^s(q)\setminus\{q\})/f$, which is a torus $T^2$.
The vector fields $X_{B,n}$ and $Z$ project to vector fields whose orbits are all periodic.
We can choose a basis for the homology $H_1(T_q,\ZZ)$ such that the class of the orbits of $Z$ is $\sigma=(1,0)$ and
the class of the orbits of $X_{B,n}$ is $(n,1)$.

Now consider an invariant open neighborhood $V_0$ and a vector field $Y_0$ on $V_0$
as given by Proposition~\ref{p.saddle}.
We can also choose a smaller neighborhood $V$ of $\sigma_f$ that is invariant by $Y_0$
(by taking small balls centered at each saddle and saturating by the flow of $Y_0$).
We want to focus on the traces of $V$ and $V_0$ on the orbit spaces $T_q$.
We emphasize the following facts.
\begin{itemize}
\item[-] The set $\pi_q(V_0\cap W^s(q)\setminus\{q\})$ is foliated by the orbits of the projection $(\pi_q)_*Y_0$,
which are closed, have period $1$ and define the same non-zero homology class.
\item[-] As $f$ is not a north-south diffeomorphisms on the sphere,
there is at least one unstable separatrix of a saddle $p$ that is contained in the basin of~$q$.
\item[-] The set $\pi_q(V\cap W^s(q)\setminus\{q\})$ is a neighborhood of the projection in $T_q$
of the unstable separatrices that are contained in the basin of~$q$.
Hence it is compactly contained in $\pi_q(V_0\cap W^s(q)\setminus\{q\})$ and invariant by the flow of $(\pi_q)_*Y_0$.
\end{itemize}

This implies the following:

\begin{lemm}\label{l.vectorus}
The orbit space $T_q$ can be endowed with a vector field $Q_q$ such that
\begin{itemize}
\item the restriction of $Q_q$ to $\pi_q(V\cap W^s(q)\setminus\{q\})$ coincides with~$(\pi_q)_*(Y_0)$;
\item all the orbits of $Q_q$ are closed and of period~$1$.
\end{itemize}
\end{lemm}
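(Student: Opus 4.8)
The plan is to put the vector field $W:=(\pi_q)_*Y_0$ into a normal form on the torus $T_q$ and then to extend it by a renormalization. By the first property emphasized above, the set $\cA_0:=\pi_q(V_0\cap W^s(q)\setminus\{q\})$ is an open subset of $T_q$ saturated by the orbits of $W$, each of which is a closed orbit of period $1$ representing one and the same nonzero class $c\in H_1(T_q,\ZZ)$. Such an orbit is an embedded essential circle, so $c$ is primitive; moreover $\cA_0$ contains no saddle of $f$, since a saddle is a fixed point and hence cannot lie in $W^s(q)\setminus\{q\}$, so $Y_0$ — and therefore $W$ — does not vanish on $\cA_0$. The orbits of $W$ in $\cA_0$ thus form a finite family of pairwise disjoint, pairwise parallel essential circles, the finiteness coming from the fact that $V_0$ meets $W^s(q)$ only along neighborhoods of the finitely many unstable separatrices landing in the basin of $q$. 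Consequently I would fix a diffeomorphism identifying $T_q$ with $(\RR/\ZZ)^2$, with coordinates $(x,\theta)$, chosen so that on $\cA_0$ the field $W$ is tangent to the vertical circles $\{x\}\times\RR/\ZZ$ and points along them, i.e. $W=h(x,\theta)\,\frac{\partial}{\partial\theta}$ with $h>0$. By the third emphasized property, $\cA:=\pi_q(V\cap W^s(q)\setminus\{q\})$ is invariant under the flow of $W$; since that flow rotates every vertical circle meeting $\cA$, this forces $\cA=U\times\RR/\ZZ$ for some open $U\subset\RR/\ZZ$, with $\overline{\cA}\subset\cA_0$.

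Next I would translate the period-$1$ condition into an integral constraint. An orbit $\{x\}\times\RR/\ZZ$ of a field $g(x,\theta)\,\frac{\partial}{\partial\theta}$ with $g>0$ is periodic of period $\int_{\RR/\ZZ}\frac{d\theta}{g(x,\theta)}$, so the fact that the orbits of $W$ have period $1$ says exactly that $\int_{\RR/\ZZ}\frac{d\theta}{h(x,\theta)}=1$ for every $x\in U$. I would then pick a smooth function $\tilde h$ on $(\RR/\ZZ)^2$, everywhere positive, equal to $h$ on a neighborhood of $\overline{\cA}$ inside $\cA_0$ (obtained by interpolating between $h$ and the constant $1$ with a bump function supported in $\cA_0$), and set
$$H(x,\theta)=\left(\int_{\RR/\ZZ}\frac{d\phi}{\tilde h(x,\phi)}\right)\tilde h(x,\theta),\qquad Q_q:=H(x,\theta)\,\frac{\partial}{\partial\theta}.$$
This $Q_q$ is a smooth vector field on $T_q$ whose orbits are precisely the circles $\{x\}\times\RR/\ZZ$, and each of them has period $\int_{\RR/\ZZ}\frac{d\theta}{H(x,\theta)}=1$ by the very choice of the normalizing factor. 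Finally, on $\cA=U\times\RR/\ZZ$ one has $\tilde h(x,\cdot)=h(x,\cdot)$, hence the normalizing factor equals $\int_{\RR/\ZZ}\frac{d\phi}{h(x,\phi)}=1$ and therefore $H=h$; thus $Q_q$ coincides with $W=(\pi_q)_*Y_0$ on $\cA$, which is what was required.

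The only step that is not completely formal is the normal-form step: one must straighten, simultaneously and by a single coordinate change of $T_q$, the finitely many disjoint annular regions making up $\cA_0$ together with their foliations by closed $W$-orbits. This is a standard fact about families of disjoint parallel essential circles on the torus — all of the common primitive class $c$, hence all isotopic to a linear circle — and it is here that finiteness of the unstable separatrices landing in the basin of $q$ is used, to guarantee only finitely many such regions, with pairwise disjoint closures, and to fix the orientation so that $h>0$ throughout. Everything after that is a cutoff and a renormalization.
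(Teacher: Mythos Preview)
Your argument is correct and supplies a full proof of a lemma that the paper itself does not prove: the paper simply writes ``This implies the following'' after the three bullet points and states the lemma with no further justification. So there is no ``paper's own proof'' to compare with; you have filled in what the authors left implicit.

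One sentence needs rewording: ``the orbits of $W$ in $\cA_0$ thus form a finite family of pairwise disjoint, pairwise parallel essential circles'' is not right as stated, since there are uncountably many orbits. What is finite is the number of connected components of $\cA_0$, each an open annulus foliated by such circles. You clearly understand this (you later speak of ``the finitely many disjoint annular regions making up $\cA_0$''), so this is just a slip to fix.

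The step you single out at the end --- simultaneously straightening the finitely many foliated annuli by a single chart of $T_q$ --- is indeed the only point with content, and it is standard. Note that you can lighten it further: since $Q_q$ only has to agree with $W$ on $\cA$, you need the straightening only on a saturated neighbourhood of $\overline{\cA}$ inside $\cA_0$, not on all of $\cA_0$; the components of such a neighbourhood can be taken with pairwise disjoint closures, which makes the simultaneous-straightening step entirely routine. After that your cutoff-and-renormalize construction of $H$ is clean and does exactly what is needed.
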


Each vector field $Q_q$ lifts to a vector field $Y_q$ on the open
set $W^s(q)\setminus \{q\}=\pi^{-1}_q(T_q)$, and by construction
coincides with $Y_0$ on the intersection of $W^s(q)\setminus \{q\}$
with $V$. Hence, we have defined a vector field $Y$ on $V\cup
\bigcup_{q\in \omega_f}W^s(q)$ whose time one map agrees with $f$.
Any point $y$ of $S\setminus(\omega_f \cup \alpha_f \cup V)$ is a
wandering point and its $\omega$-limit set is a sink (otherwise $y$
would belong to one of the invariant manifolds of a saddle, hence to
$V$, which is a contradiction). This shows that $Y$ is now defined
on the whole punctured surface $S\setminus(\omega_f \cup \alpha_f)$,
proving Proposition~\ref{p.surface}.

\subsection{Gluing the vector fields around sinks and sources}\label{ss.endcontained}
The aim of this section is to perform a small
perturbation of $f$ in a small neighborhood of the sinks and sources
of $f$ (but keeping $f$ unchanged in a smaller neighborhood of the
sinks and sources) so that the vector field $Y$ provided by Proposition~\ref{p.punctured} can be extended
to a smooth vector field on $S$.

Fix a sink $q\in \omega_f$ and keep the notations of the previous section.
The dynamics in a neighborhood of $q$ agree with those of a conformal linear map $B$.
By projecting the vector field $Y$ on the torus $T_q$,
we obtain a vector field $\hat Y_q=(\pi_q)_*(Y)$.
Each orbit of $Y$ is a path joining a point $y$ to $f(y)$.
consequently, the orbits of $\hat Y_q$ on $T_q$ are (simple) curves
and are in the same homology class as the orbits of a vector field
$\hat X_q$ obtained by projecting the vector field $X_{B,n_q}$, for some $n_q$.

\begin{lemm}\label{l.isotop}
There exists a smooth diffeomorphism $\psi_{f,q}$ of $T_q$
that is isotopic to the identity map and such that $(\psi_{f,q})_*(\hat Y_q)=\hat X_q.$
\end{lemm}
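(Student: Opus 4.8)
The statement asserts that on the torus $T_q$ we can find a diffeomorphism $\psi_{f,q}$, isotopic to the identity, carrying the vector field $\hat Y_q=(\pi_q)_*(Y)$ onto the model vector field $\hat X_q=(\pi_q)_*(X_{B,n_q})$. The key point is that \emph{both} vector fields have all orbits closed, of period $1$, and in the \emph{same} homology class of $H_1(T_q,\ZZ)$ (this is exactly what was arranged in the construction of $Y$ near $q$: the orbit of $Y$ through $y$ joins $y$ to $f(y)$, and by Lemma~\ref{l.vectorus} the projected field $\hat Y_q$ coincides with $(\pi_q)_*(Y_0)$ near the saddle separatrices landing at $q$, hence is homotopic to $(\pi_q)_*(X_{B,n_q})$ for the appropriate $n_q$). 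So the problem reduces to: \emph{two nowhere-vanishing vector fields on $T^2$ with all orbits closed of period $1$ and in the same primitive homology class are conjugate by a diffeomorphism isotopic to the identity.}

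First I would pass to a suspension / fibration picture. A nonvanishing vector field on $T^2$ whose orbits are all closed in a fixed primitive class $c$ defines a foliation by circles all homologous to $c$; the quotient $T_q/\hat Y_q$ (collapsing each orbit to a point) is a circle $C$, and the projection $T_q\to C$ is a smooth $S^1$-bundle over $S^1$ — necessarily the trivial bundle, since $T^2$ is the only orientable such bundle compatible with the homology data. Concretely I would choose a smooth closed transversal $\tau$ to $\hat Y_q$ realizing a class $d$ with $c\cdot d=1$ (such a $\tau$ exists because $c$ is primitive), and likewise a transversal $\tau'$ to $\hat X_q$. The time-$1$ flow gives each orbit a canonical parametrization by $\RR/\ZZ$, and together with the first-return parametrization of $\tau$ (resp. $\tau'$) this yields explicit coordinates $(s,t)\in(\RR/\ZZ)^2$ on $T_q$ in which $\hat Y_q=\partial/\partial t$ (resp. $\hat X_q=\partial/\partial t$). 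Indeed, the first return map of $\hat Y_q$ to $\tau$ is the time-$1$ map of a complete flow on a circle, hence — after reparametrizing $\tau$ by that flow — is a rotation; since the orbits are \emph{exactly} period $1$, in fact the return time is constant and the first-return map is the identity, so the coordinates trivialize $\hat Y_q$ outright. The comparison map $\psi_{f,q}$ is then the composition of the two coordinate charts, an affine-type diffeomorphism of $T^2$ preserving $\partial/\partial t$.

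It remains to check that $\psi_{f,q}$ is isotopic to the identity. In the $(s,t)$-coordinates it has the form $(s,t)\mapsto(s, t+\rho(s))$ composed possibly with a shear in the $s$-direction; but the $s$-direction is $C=T_q/\hat Y_q$, which the conjugacy must respect, so the induced map on $C$ is an orientation-preserving circle diffeomorphism, hence isotopic to the identity, and the fibre translation $t\mapsto t+\rho(s)$ is manifestly isotopic to the identity through $t\mapsto t+u\rho(s)$, $u\in[0,1]$. Alternatively, and more cleanly, the action of $\psi_{f,q}$ on $H_1(T_q,\ZZ)$ fixes the class $c$ of the orbits (it carries $\hat Y_q$-orbits to $\hat X_q$-orbits, both in class $c$) and, being orientation-preserving on the base circle and on the fibres, fixes a complementary class as well, so $(\psi_{f,q})_*=\mathrm{Id}$ on homology; a diffeomorphism of $T^2$ acting trivially on $\pi_1=H_1$ is isotopic to the identity. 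The main obstacle is the careful bookkeeping needed to produce the trivializing coordinates — in particular verifying that the first-return time of $\hat Y_q$ to a transversal is \emph{constant} (forced by all orbits having period exactly $1$ for the complete flow), so that the $S^1$-bundle structure is genuinely a product and not merely a mapping torus of a rotation; once that is in hand the isotopy-to-the-identity claim is essentially formal.
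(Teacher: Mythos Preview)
Your approach is essentially the paper's: choose a global cross-section to each of the two period-$1$ flows, pick an orientation-preserving diffeomorphism between the cross-sections, and extend by flowing for equal times. The paper does exactly this in four lines, taking for $\hat X_q$ the transversal $\sigma$ given by a projected $Z$-orbit, and choosing a transversal $\sigma_q$ to $\hat Y_q$ \emph{in the same homology class as $\sigma$}.

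One soft spot in your write-up: your justification that $\psi_{f,q}$ is isotopic to the identity is not quite airtight as stated. You allow ``possibly a shear in the $s$-direction'' and then argue that being orientation-preserving on base and fibres forces the action on $H_1$ to be trivial. That inference is false: the shear $\left(\begin{smallmatrix}1&0\\k&1\end{smallmatrix}\right)$ is orientation-preserving on both base and fibre but does not fix a complementary class. The clean fix --- and this is precisely what the paper does --- is to choose the two transversals $\tau,\tau'$ in the \emph{same} homology class $d$ from the outset (your ``likewise'' may already intend this). Then the constructed map sends $c\mapsto c$ and $d\mapsto d$, hence acts trivially on $H_1(T_q,\ZZ)$, and the isotopy-to-identity follows. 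With that adjustment your argument is correct and matches the paper's.
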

\begin{proof}
The orbits of $\hat Y_q$ and $\hat X_q$ are all periodic of period $1$ and are in the same homology class.
Let $\sigma$ be a cross-section of $\hat X_p$ obtained by projecting an orbit of the vector field
$Z= 2\pi(x\frac\partial{\partial y}-y\frac\partial{\partial x})$.
One chooses a complete smooth cross section $\sigma_q$
of $\hat Y_q$ that is in the same homology class as $\sigma$
and cuts every orbit of $\hat Y_q$ in exactly one point.
We also choose an orientation-preserving diffeomorphism $\psi_{f,q}\colon \sigma_q\to\sigma$.
This diffeomorphism extends in a unique way in the announced diffeomorphism of the torus $T_q$.
\end{proof}

\begin{rema}\begin{itemize}
\item The diffeomorphism $\psi_{f,q}$ is not unique: its depends on the choice of $Y$
and of the cross section $\sigma_q$ in the neighborhood of $q$.
\item If $\psi_{f,q}$ is the identity map, then the vector field $Y$ agrees with the linear vector fields $X_q$
in a neighborhood of $q$.
\end{itemize}
\end{rema}

Given any $f\in \cD_1$, we will perform a perturbation $g_q\in\cD_1$ of $f$
whose associated diffeomorphism $\psi_{g,q}$ is the identity map.

\begin{prop}\label{p.perturb} Consider $f\in\cD_1$, a smooth complete vector field $Y$
on $S\setminus (\alpha_f\cup\omega_f)$ such that $f$ is the time one map of the flow of $Y$
and a sink $q\in \omega_f$. For every $C^1$-neighborhood $\cU$ of $f$,
and for every neighborhood $O$ of $q$ there is $g_q\in \cU\cap\cD_1$ with the following properties:
\begin{itemize}
\item there is neighborhood $O'\subset O$ of $q$ such that $g_q=f$ on $O'\cup(S\setminus O)$;
\item there is a smooth complete vector field $\tilde Y$ defined on
$S\setminus(\alpha_f\cup\omega_f)\cup\{q\}$ coinciding with $Y$ outside $O$
and such that $g_q$ is the time one map of the flow of $\tilde Y$.
\end{itemize}
\end{prop}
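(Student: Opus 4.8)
The plan is to reproduce, in the two-dimensional setting near a sink, the one-dimensional mechanism of Subsection~\ref{ss.contre-exemple}: the obstruction to $Y$ extending smoothly over $q$ is exactly the diffeomorphism $\psi_{f,q}$ of the torus $T_q$ provided by Lemma~\ref{l.isotop}, and we will kill it by composing $f$ with a product of diffeomorphisms supported in small disks deep inside the basin of $q$. Concretely, choose disks $D^N$-style neighborhoods: fix a disk $D\subset O$ around $q$ on which $f$ agrees with the conformal linear map $B$, and fix a smaller disk $D^S\subset D$ such that $f(D)\subset D$ and $f=B$ on $D$. Since $\psi_{f,q}$ is isotopic to the identity, Theorem~\ref{t.decomposition} (the fragmentation lemma on $T^2$) lets us write $\psi_{f,q}^{-1}=\chi_1\circ\cdots\circ\chi_\ell$ with each $\chi_i$ supported in a disk $\tilde D_i\subset T^2$ of diameter less than $1$ and each $\chi_i$ in a prescribed $C^1$-neighborhood $\cV_\varepsilon$ of the identity (the one furnished by Corollary~\ref{c.varep}, applied to $T_q$ via the identification $\cL_{B,n_q}$). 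Lift each $\chi_i$ to a diffeomorphism $h_i=\theta_{i}(\chi_i)$ of $S$ supported in a disk $D_i\subset D^S$ which is a well-chosen pullback component under $\cL_{B,n_q}\circ\pi_q$, arranged so that the $h_i$ have pairwise disjoint supports and moreover that all $B^k(\mathrm{supp}(h_j))$, $k\ge 0$, avoid the earlier supports — exactly the "lift of a sequence" condition. Set $g_q=f\circ h_1\circ\cdots\circ h_\ell$.

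Next I would verify the two asserted properties. Since every $h_i$ is supported in $D^S\subset O$ and is the identity on a smaller disk around $q$, and since $g_q$ differs from $f$ only on $\mathrm{supp}(h_1)\cup\cdots\cup\mathrm{supp}(h_\ell)\subset D^S$, we get $g_q=f$ on $S\setminus O$ and on a neighborhood $O'$ of $q$; choosing the $\chi_i$ in $\cV_\varepsilon$ with $\varepsilon$ small (and the supports away from the other fixed points, exploiting that the disks are inside $D^S$) keeps $g_q\in\cU$, and one checks $g_q\in\cD_1$ since $g_q$ still agrees with $f$ — hence is linear — near every fixed point and the determinant condition is unaffected. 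For the vector field, the key point is the torus-level computation analogous to Corollary~\ref{c.composition}/Corollary~\ref{c.compo}: composing $f$ with the lifts $h_i$ transforms the projected vector field so that the associated diffeomorphism becomes $\psi_{g_q,q}=\chi_\ell\circ\cdots\circ\chi_1\circ\psi_{f,q}=\mathrm{id}$ on $T_q$ (the new section can be taken as the image of the old one). By the Remark following Lemma~\ref{l.isotop}, $\psi_{g_q,q}=\mathrm{id}$ means precisely that the flow $Y$ — which outside $O$ is unchanged, so still has $f=g_q$ as its time one map there — extends over $q$ as the linear conformal vector field $X_{B,n_q}$; call the extension $\tilde Y$. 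Its flow is complete (it is linear near $q$ and complete away from $q$ by Proposition~\ref{p.punctured}), and $g_q$ is its time one map, since this holds near $q$ by construction and off $O$ because $g_q=f$ there and $g_q$-invariance of $\tilde Y$ propagates the relation across $O\setminus O'$ exactly as in the circle case (the invariance argument following the Claim in Proposition~\ref{p.matherdef}).

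The main obstacle is the careful bookkeeping of the disks $D_i$: one must arrange simultaneously that (i) each $D_i$ projects diffeomorphically onto $\tilde D_i$ under $\cL_{B,n_q}\circ\pi_q$, (ii) the supports of distinct $h_i$ are disjoint, (iii) the forward $B$-orbit of $\mathrm{supp}(h_j)$ never meets $\mathrm{supp}(h_i)$ for $i<j$ nor re-enters $\mathrm{supp}(h_j)$ itself, and (iv) everything stays inside $D^S$ and away from the other fixed points and from the saddle-neighborhood $V$ where $Y=Y_0$ is already fixed. All of this is possible because the map $\cL_{B,n_q}\circ\pi_q$ has, over any disk of diameter $<1$, infinitely many pullback components indexed by the $f$-iteration level (the set-up recalled just before Corollary~\ref{c.compo}), so we simply pick the $D_i$ at sufficiently high, pairwise distinct iteration levels; the $C^1$-size control $\sup_x\|D_xh_i-\mathrm{Id}\|<\varepsilon$ is then automatic from Corollary~\ref{c.varep}, independently of how deep we go, which is exactly why conformality of $Df$ at $q$ was required. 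The remaining checks — smoothness of $\tilde Y$ at $q$, completeness, and $g_q\in\cD_1\cap\cU$ — are routine given the $1$-dimensional and spherical analogues already established.
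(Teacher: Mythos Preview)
Your approach is essentially the paper's own: fragment $\psi_{f,q}$ on $T_q$ via Theorem~\ref{t.decomposition}, lift the factors to pairwise disjoint disks deep in the basin of $q$ (using the conformality of $B$ for the $C^1$-size control, exactly as in Corollary~\ref{c.varep}), compose with $f$, and check that the obstruction on $T_q$ becomes trivial so that the vector field extends as the linear field $X_{B,n_q}$ over $q$. Two small points to tidy up: first, your factorization $\psi_{f,q}^{-1}=\chi_1\circ\cdots\circ\chi_\ell$ does not yield $\chi_\ell\circ\cdots\circ\chi_1\circ\psi_{f,q}=\mathrm{id}$ (compare the sphere case, where one writes $\De_f=\psi_1^{-1}\circ\cdots\circ\psi_\ell^{-1}$), and in fact the paper's transformation rule here is $\psi_{g_h,q}=\psi_{f,q}\circ\hat h^{-1}$, so the composition happens on the right; second, the paper makes your ``propagation'' argument explicit by exhibiting a global conjugacy $H$ (equal to $f^i\circ h\circ f^{-i}$ on $f^i(D)$ and the identity elsewhere) with $g_h=H\circ f\circ H^{-1}$ and $\tilde Y=H_*Y$, which immediately gives that $g_h$ is the time-one map of $\tilde Y$.
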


One obtains the first part of Proposition~\ref{p.surface} by applying
Proposition~\ref{p.perturb} to $f$ for each sink  and to $f^{-1}$ for each source, successively.

The argument for modifying $\psi_{f,q}$
is almost identical to the proof of Theorem~\ref{t=sphere} at Section~\ref{ss.vanish2}, and we just sketch it.
\bigskip

\noindent
\begin{proof}[Sketch of the proof of Proposition~\ref{p.perturb}]
By reducing $O$, we may assume that it is a small disk centered at $q$ and contained in $U_q$.
Let $D\subset O$ be a disk whose iterates
$f^i(D)$, $i\in\NN$, are pairwise disjoint and all contained in
$O$. We denote by $\hat D=\pi_q(D)$ the projection of $D$ on
$T_q$. Notice that $\pi_q$ induces a diffeomorphism from $D$ to
$\hat D$. Let $h$ be a diffeomorphism of $S$ coinciding with the
identity map on the complement of $D$. We denote by $\hat h$ the diffeomorphism of
$T_q$ that is the identity map on the complement of $\hat D$ and is $\pi_q\times
h\times \pi_q^{-1}$ on $\hat D$.  Let $g_h= f\circ h$, and
let $Y_h$ be the vector field on $S\setminus (\alpha_f\cup \omega_f)$ that
coincides with $Y$ in the complement of $\bigcup_{i>0}f^i(D)$ and with $(f^i\circ
h)_*(Y)$ on $f^i(Q)$ for $i>0$.
The following lemma is  the analogue of Lemma~\ref{l.compo}.

\begin{lemm}\label{l.perturb} With the notation above, one has:
\begin{itemize}
\item $g_h$ belongs to $\cD_1$ and agrees with $f$ in a neighborhood of $Fix(f)$;
\item $g_h$ is the time one map of the flow of the vector field $Y_h$;
\item $(\psi_{f,q}\circ \hat h^{-1})_*(\hat Y_h)=\hat X_q$, so that one can choose $\psi_{g_h,q}=\psi_{f,q}\circ \hat h^{-1}$.
\end{itemize}
\end{lemm}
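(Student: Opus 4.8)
The plan is to verify the three bullet points of Lemma~\ref{l.perturb} more or less by unwinding the definitions, following the template already established in Section~\ref{ss.vanish2} for the sphere (Lemma~\ref{l.compo} and Corollary~\ref{c.compo}). First, I would check that $g_h=f\circ h$ still lies in $\cD_1$: since $h$ is supported in the disk $D\subset O\subset U_q$ which is disjoint from a neighborhood of every fixed point of $f$, the map $g_h$ coincides with $f$ near $\mathrm{Fix}(f)$, so all the local-linearity and distinct-determinant conditions defining $\cD_1$ are inherited from $f$, and no new periodic points are created inside $O$ because the forward iterates $f^i(D)$ are pairwise disjoint and stay in $O$ accumulating on $q$. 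This also shows $g_h$ has the same sinks, sources and saddles as $f$.

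Next, for the second bullet I would show $g_h$ is the time one map of $Y_h$. On the complement of $\bigcup_{i\ge 0} f^i(D)$, $g_h=f$ and $Y_h=Y$, so there $g_h$ is the time one map of $Y_h$ by hypothesis on $Y$. On $f^i(D)$ for $i>0$, by construction $Y_h=(f^i\circ h)_*(Y)$; using that $f$ is the time one map of $Y$ and the iterates are disjoint, a direct computation (the same one that proves the analogous statement on the sphere) shows the time one map of $Y_h$ sends $f^{i-1}(D)$ to $f^i(D)$ and agrees there with $f\circ h=g_h$ for $i=1$ and with $f$ for $i>1$, which is exactly $g_h$ on that region. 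One must also check that $Y_h$ glues to a smooth vector field on the overlaps of these pieces, which holds because $h=\mathrm{id}$ near $\partial D$, so $Y_h=Y$ near the boundaries $\partial f^i(D)$; and that the flow of $Y_h$ remains complete, which is clear since $Y_h$ differs from the complete field $Y$ only on a relatively compact invariant region.

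For the third bullet I would compute the effect on the projected vector field on the orbit torus $T_q$. Since $h$ fixes the local dynamics near $q$ (so the return structure to $D^s$-type regions is unchanged away from $D$), the orbit space of $g_h$ restricted to $W^s(q)\setminus\{q\}$ is still identified with $T_q$, and projecting an orbit of $Y_h$ amounts to concatenating: an orbit of $Y$ runs until it enters $D$, then $h$ pushes the entry point by $\hat h$ inside $\hat D$, and then the orbit continues. On the level of the return cross-section this replaces the old section $\sigma_q$ by $\hat h(\sigma_q)$, equivalently the projected field $\hat Y_h$ is $(\hat h)_*(\hat Y)$ ``seen the other way,'' i.e. $(\psi_{f,q}\circ\hat h^{-1})_*(\hat Y_h)=\hat X_q$ since $(\psi_{f,q})_*(\hat Y)=\hat X_q$ by Lemma~\ref{l.isotop}. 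Thus $\psi_{f,q}\circ\hat h^{-1}$ is an admissible choice of straightening diffeomorphism for $g_h$ at $q$.

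Finally, to conclude Proposition~\ref{p.perturb} from the lemma, I would argue exactly as in the proof of Theorem~\ref{t=sphere}: by the fragmentation lemma (Theorem~\ref{t.decomposition}) on $T_q$, write $\psi_{f,q}^{-1}$ as a product of diffeomorphisms each supported in a small disk and $C^1$-close to the identity; use the analogue of Corollary~\ref{c.varep} (the $C^1$-size control on the lifts $\theta_i(\psi)$, which holds because $B$ is conformal) to guarantee that the composed perturbation $h$ has $g_h=f\circ h\in\cU$; and note $h$ is supported in $O$ and equals the identity on a smaller neighborhood $O'$ of $q$ where the lifted disks do not accumulate in $C^1$-norm, giving $g_q=f$ on $O'\cup(S\setminus O)$. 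Since $\psi_{g_q,q}=\psi_{f,q}\circ\psi_{f,q}^{-1}=\mathrm{id}$, the second remark after Lemma~\ref{l.isotop} shows $\tilde Y:=Y_h$ extends smoothly across $q$, and $g_q$ is its time one map. The main obstacle is the careful bookkeeping of the orbit-torus identification in the third bullet — making precise why inserting $h$ on $D$ translates into precomposition by $\hat h^{-1}$ on the straightening map — but this is essentially a copy of the one already carried out on the sphere, where the role of $D^S_f$ is played here by the image in $T_q$ of a fundamental domain in $W^s(q)$.
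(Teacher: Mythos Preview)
Your treatment of the first and third bullets, and your sketch of how the lemma is used to finish Proposition~\ref{p.perturb}, are fine and in line with the paper. The point of divergence is the second bullet, which the paper explicitly flags as ``the unique difficulty.''

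Your piece-by-piece argument has a gap. You write that on the complement of $\bigcup_{i\ge 0} f^i(D)$ one has $Y_h=Y$, ``so there $g_h$ is the time one map of $Y_h$.'' But the time one map of $Y_h$ at $x$ is computed by following the $Y_h$-flow for a full unit of time, and that trajectory need not remain in the region where $Y_h=Y$: the $Y$-orbit segment from $x$ to $f(x)$ projects in $T_q$ to an entire closed $\hat Y$-orbit, which may well cross $\hat D$, so the trajectory can enter some $f^j(D)$ where $Y_h\neq Y$. The same problem occurs for trajectories starting in $f^{i-1}(D)$. Your appeal to ``the same computation as on the sphere'' does not help, since Lemma~\ref{l.compo} concerns the Mather invariant directly and never tracks a perturbed vector field.

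The paper sidesteps this bookkeeping with a single observation: define the diffeomorphism $H$ of $S\setminus(\alpha_f\cup\omega_f)$ by $H=f^i\circ h\circ f^{-i}$ on $f^i(D)$ for $i>0$ and $H=\mathrm{id}$ elsewhere. Using $f_*(Y)=Y$ one checks immediately that $Y_h=H_*(Y)$ and that $g_h=H\circ f\circ H^{-1}$; hence $g_h$ is the time one map of $Y_h$ with no piecewise tracking needed. Your direct computation can be salvaged, but doing so amounts to reconstructing this conjugacy $H$ along each orbit; as written, it is incomplete.
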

\begin{proof}
The unique difficulty here is to show that $g_h$ is the time one map
of $Y_h$. For that, let $H$ be the diffeomorphism of $S\setminus
(\alpha_f\cup\omega_f)$ that is $f^i\circ h\circ f^{-i}$ on $f^i(D$)
for $i>0$ and the identity map in the complement of
$\bigcup_{i>0}f^i(D)$. Note that $g_h$ is conjugate to $f$ by $H$
and $Y_h=H_*(Y)$.
\end{proof}

Conversely, for any disk $\hat D\subset T_q$ with small diameter, the connected
components of $\pi_q^{-1}(\hat D)$ are diffeomorphic to $\hat D$; we denote by $D_i$ the component
that is contained in $B^i(O)$ but not in $B^{i+1}(O)$.
For any diffeomorphism $\hat h$ of $T_q$, with support in the disk $\hat D$, we denote by $\cL_i{h}$
the lift of $\hat h$ that is supported in $D_i$.

Let now consider a diffeomorphism $\psi_{f,q}$ of $T_q$ associated
to $f$ and $Y$. Since $\psi_{f,q}$ is isotopic to the identity map,
the fragmentation lemma (Theorem~\ref{t.decomposition}) allows us to
write $\psi_{f,q}$ as the composition $\psi_{f,q}=\hat
h_k\circ\cdots\circ \hat h_1$ of finitely many diffeomorphisms $\hat
h_i$ arbitrarily $C^1$-close to the identity map and each with
support in an arbitrarily small disk. We then build the lifts $h_i=
\cL_{2i}(\hat h_i)$ whose supports are pairwise disjoint. Let
$g=f\circ h_1\circ\cdots\circ h_k$. Then $g$ belongs to $\cD_1$ and
agrees with $f$ in the complement of $O$ and on $f^{2k+3}(O)$.
Furthermore, since the diffeomorphisms $\hat h_i$ can be chosen
close to the identity, $g$ belongs to $\cU$. Finally, applying
inductively Lemma~\ref{l.perturb}, we see that $g$ is the time one
map of a vector field $\tilde Y$ that coincides with the linear
vector field $X_q$ in a neighborhood of $q$. Hence it may be
extended smoothly at $q$, ending the proof.
\end{proof}

\subsection{End of the proof of theorem~\ref{t.surface}}\label{ss.equality}
Let $\cD_2\subset \cD_1$ be the subset of diffeomorphisms that
are the time one map of the flow of a smooth vector field. The
previous sections proved that $\cD_2$ is dense in $\cD_1$, hence in
the open subset  $\cO$ of $\Diff^1(M)$. In order to prove
Theorem~\ref{t.surface}, we will first compute $C(f)$ for
$f\in\cD_2$.

Let $f\in \cD_2$ be the time one map of a smooth vector field $X$,
and consider $g\in C(f)$. By our assumptions on $\cD_1$
(two different fixed points have different determinant), $f$ and $g$ have the same fixed points.
Consider now any saddle $p$ of $f$. The \emph{unstable separatrices}
are defined to be the connected components of $W^u(x)\setminus \{x\}$, and we denote them by $W^s_+(x)$, $W^s_-(x)$.
Note that $g$ preserves or exchanges the two separatrices of $p$.

\begin{prop}\label{p.separatrice}
Consider $f\in \cD_2$.
If $g\in C(f)$ preserves an unstable separatrix of a saddle $p\in \sigma_f$,
then $g$ belongs to the flow of $X$.
\end{prop}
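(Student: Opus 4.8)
The plan is to produce a real number $c$ such that $g$ coincides with the time-$c$ map $X^{c}$ of the flow of $X$; then $g$ lies in that flow. Write $g_1=g\circ X^{-c}\in C(f)$; the goal becomes $g_1=\id$.

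First I would collect the consequences of commutation. Since $\det Df$ distinguishes the fixed points of $f$ (this is built into $\cD_1$) and $g$ conjugates $Df(x)$ to $Df(g(x))$, the map $g$ fixes every fixed point of $f$, in particular the saddle $p$ and the sink $q$ at which the preserved separatrix $\ell=W^s_+(p)$... more precisely $\ell=W^u_+(p)$ lands. The curve $\ell\setminus\{p\}$ is flow-invariant ($X^t$ commutes with $f$, fixes $p$, and for small $t$, hence all $t$, preserves each of the two unstable separatrices of $p$), so it is a single orbit of $X$; fix the parametrization $t\mapsto\varphi(t)=X^t(\varphi(0))$ of $\ell$ with $\varphi(t)\to p$ as $t\to-\infty$ and $\varphi(t)\to q$ as $t\to+\infty$, so that $f$ acts on $\ell$ as $t\mapsto t+1$.

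Next comes the rigidity of $g$ near $q$ and the choice of $c$. Near $q$ the diffeomorphism $f$ equals the conformal linear contraction $B=Df(q)$, whose rotation angle is $\neq 0,\pi$ because the eigenvalues are non-real; so Lemma~\ref{l.local} forces $g$ to coincide near $q$ with a conformal linear map. Writing $g(\varphi(t))=\varphi(G(t))$ with $G$ an increasing solution of $G(t+1)=G(t)+1$, and using that for large $t$ one has $\varphi(t)=\beta^{t}e^{i\omega_q t}w_0$ in the linear chart near $q$ (where $X$ is the conformal linear vector field with time-one map $B$ and angular speed $\omega_q$), comparison of moduli forces $G(t)=t+c$ for all large $t$, hence for all $t$ by the functional equation, while comparison of arguments pins down the rotation part; thus $g=X^{c}$ near $q$. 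Consequently $g_1=\id$ near $q$, and since $g_1$ commutes with $f$ and $W^s(q)=\bigcup_{n\ge 0}f^{-n}(U)$ for any neighborhood $U$ of $q$, one gets $g_1=\id$ on the whole basin $W^s(q)$.

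It then remains to propagate $g_1=\id$ over all of $S$, and this I expect to be the main obstacle. The local mechanism is: if $g_1=\id$ on a sink-basin $W^s(b)$ and $p'$ is a saddle with an unstable separatrix limiting to $b$, then a whole ``side'' of a neighborhood of $p'$ lies in $W^s(b)$, so $g_1=\id$ there and, by continuity, on the local stable manifold of $p'$, hence on all of $W^s(p')$; its stable separatrices limit to sources $a$, near which $g_1$ is again conformal linear (Lemma~\ref{l.local} applied to $f^{-1}$) and fixes a nonzero vector of the spiralling separatrix, hence $g_1=\id$ near $a$ and on $W^u(a)$; these open sets then cover the remaining sides of a neighborhood of $p'$, so $g_1=\id$ near $p'$ and on $W^u(p')$, and (as near $q$) near the sinks at the ends of its other unstable separatrices. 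The symmetric statement holds starting from sources. Thus the set of fixed points near which $g_1=\id$ is closed under heteroclinic adjacency, and the heteroclinic graph of $f$ is connected (otherwise the union of the $W^s(z)$ over one component would be a nonempty proper open-and-closed subset of the connected surface $S$, openness coming from the fact that a point near $W^s(z)$ has $\omega$-limit equal to $z$ or to some $z'$ with $W^u(z)\cap W^s(z')\neq\emptyset$). Since $q$ is already activated, $g_1=\id$ near every fixed point, hence on $\bigcup_z W^s(z)=S$, so $g=X^{c}$. The delicate points are that everything is only $C^\infty$, not analytic, so a half-neighborhood of a saddle must be genuinely covered by basins on which $g_1$ is already trivial rather than handled by analytic continuation, and that the induction must be organized (via connectedness of $S$) so that the trivial locus never stalls.
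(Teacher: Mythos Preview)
Your argument is correct, but it takes a different route from the paper's.

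The paper never fixes the constant $c$ up front. Instead it works on the orbit torus $T_q$ of each sink or source $q$, endowed with affine coordinates $(r,s)$ in which the projection of $X$ is $\partial/\partial s$ and the projection of the rotation field $Z$ is $\partial/\partial r$. Lemma~\ref{l.local} forces the induced map $g/_f$ on $T_q$ to be a translation $(r,s)\mapsto(r+\alpha,s+\beta)$; invariance of the projected separatrix $\{r_0\}\times S^1$ gives $\alpha=0$, so $g$ preserves \emph{every} $X$-orbit in $W^s(q)$. Propagation is then purely through sinks and sources: if $W^u(q_1)\cap W^s(q_0)\neq\emptyset$, a single invariant $X$-orbit in that open intersection forces $\alpha=0$ on $T_{q_1}$ as well. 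Only at the very end does the paper define $t(x)$ with $g(x)=X^{t(x)}(x)$ and argue it is locally constant, hence globally constant.

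Your version fixes $c$ immediately from the initial separatrix, sets $g_1=g\circ X^{-c}$, and then propagates the statement $g_1=\id$ through the saddle structure: half-neighborhoods of saddles lie in already-trivialized basins, continuity extends $g_1=\id$ across the stable manifold, and then at each newly reached source or sink you invoke the principle that a conformal linear map with a nonzero fixed point is the identity. This avoids the quotient tori entirely, at the price of a more delicate local analysis near the saddles (the quadrant decomposition and the $\lambda$-lemma-type claim that adjacent quadrants lie in the relevant basin). The paper's approach is cleaner and more uniform—one mechanism, applied on each $T_q$—while yours is more elementary in the sense that it never leaves the surface, but requires tracking more cases. Both rely on the same two ingredients: Lemma~\ref{l.local} near sinks/sources, and connectedness of the Morse--Smale adjacency graph on the connected surface $S$.
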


As in the proof of Proposition~\ref{p.centre-cercle} of Section~\ref{ss.centre-cercle}, we deduce:
\begin{coro} For $f\in\cD_2$ the centralizer is either the flow of $X$
(hence is isomorphic to $\RR$) or is isomorphic to $\RR\times\ZZ/2\ZZ$.
\end{coro}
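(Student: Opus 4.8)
The plan is to deduce the Corollary from Proposition~\ref{p.separatrice} by the same dichotomy used for the circle in Proposition~\ref{p.centre-cercle}. First I would observe that, since $f\in\cD_2\subset\cD_1$, two distinct fixed points of $f$ have distinct determinants, so every $g\in C(f)$ fixes each fixed point of $f$; in particular $g$ fixes every saddle $p\in\sigma_f$ (recall $\sigma_f\neq\emptyset$ by hypothesis on $\cO$). At such a saddle, $g$ either preserves both unstable separatrices of $p$ or exchanges them. If \emph{some} $g\in C(f)$ preserves an unstable separatrix of some saddle, Proposition~\ref{p.separatrice} puts that $g$ in the flow of $X$. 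Now $C(f)$ contains the flow $\{X^t\}_{t\in\RR}$, which already acts trivially on the set of separatrices of every saddle; so the subgroup $C_0(f)\subset C(f)$ of elements preserving the unstable separatrices of $p$ contains the flow and, by Proposition~\ref{p.separatrice}, is \emph{equal} to the flow, hence isomorphic to $\RR$.

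Next I would treat the remaining elements. The assignment sending $g\in C(f)$ to the permutation it induces on the two unstable separatrices of our fixed saddle $p$ is a group homomorphism $C(f)\to\ZZ/2\ZZ$ with kernel exactly $C_0(f)\simeq\RR$. Hence either this homomorphism is trivial, in which case $C(f)=C_0(f)\simeq\RR$, or it is onto, in which case $C(f)/C_0(f)\simeq\ZZ/2\ZZ$, i.e. $C(f)$ is an extension of $\ZZ/2\ZZ$ by $\RR$. To identify this extension as a direct product, I would pick any $g_1\in C(f)$ exchanging the separatrices of $p$; then $g_1^2\in C_0(f)$, so $g_1^2=X^{s}$ for some $s\in\RR$, and replacing $g_1$ by $g_1\circ X^{-s/2}$ (legitimate since $X^{-s/2}\in C_0(f)$ commutes with $g_1$) we may assume $g_1^2=\id$. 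Moreover $g_1$ normalizes the flow: $g_1 X^t g_1^{-1}$ is again a one-parameter subgroup of $C(f)$ whose time-one map is $f$ (as $g_1$ commutes with $f$), and by the uniqueness argument already used in the proof of Proposition~\ref{p.separatrice} (a one-parameter group commuting with $f$ and agreeing with the linearization near the fixed points is forced to be $\{X^t\}$) one gets $g_1 X^t g_1^{-1}=X^t$. Therefore $g_1$ centralizes the flow, and $C(f)=\{X^t\}\times\langle g_1\rangle\simeq\RR\times\ZZ/2\ZZ$.

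The step I expect to be the only subtlety is the last normalization: verifying that an orientation-of-separatrices–reversing element $g_1$ actually commutes with the whole flow of $X$ (rather than merely normalizing it by an automorphism), and that $g_1^2$ indeed lies in the flow so that the square can be killed. Both points rest on the rigidity coming from the linearizing coordinates near the fixed points in $\cD_1$ together with Proposition~\ref{p.separatrice} applied to $g_1^2$, so once those are invoked the argument is purely group-theoretic, exactly as in Cases a and c of the proof of Proposition~\ref{p.centre-cercle}. Finally I would remark that this Corollary leaves open which of the two alternatives occurs; the equality $C(f)\simeq\RR$ for the relevant dense subset $\cD\subset\cD_2$ will be obtained, as on the circle, by showing that the symmetry measured by $g_1$ can be destroyed by an arbitrarily $C^1$-small perturbation, which is the content of the subsequent subsections.
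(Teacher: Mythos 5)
Your proof is correct and follows the same route the paper intends when it writes ``As in the proof of Proposition~\ref{p.centre-cercle}'': Proposition~\ref{p.separatrice} identifies the kernel of the separatrix-permutation homomorphism $C(f)\to\ZZ/2\ZZ$ with the flow $\{X^t\}$, and in the onto case one produces a splitting element of order two by killing the square along the flow. For the step you flag, note that the commutation $g_1X^tg_1^{-1}=X^t$ is cleanest obtained by applying Proposition~\ref{p.separatrice} itself to $g_1X^tg_1^{-1}$ (which preserves the separatrices, since $g_1$ merely permutes the pair and $X^t$ fixes each), giving $g_1X^tg_1^{-1}=X^{\phi(t)}$ for a continuous automorphism $\phi$ of $\RR$, and then $g_1X^1g_1^{-1}=g_1fg_1^{-1}=f=X^1$ forces $\phi=\id$; this avoids the appeal to a uniqueness-of-one-parameter-groups statement that is not actually contained in Proposition~\ref{p.separatrice}, and it should logically precede the normalization $g_1\mapsto g_1\circ X^{-s/2}$, which you invoke slightly out of order.
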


Every unstable separatrix of a saddle $p$ is contained in the basin of a sink $q$ of $f$.
In this section, we endow each orbit space $T_q$ with affine coordinates $(r,s)\in \RR^2/\ZZ^2$
such that the vector field $X$ projects to $\frac\partial{\partial s}$,
and the vector field $Z$, whose expression in the local coordinates at $q$
is $Z= 2\pi(x\frac\partial{\partial y}-y\frac\partial{\partial x})$,
projects to $\frac \partial {\partial r}$.
Note that the unstable separatrix of $p$ is precisely one orbit of $X$
and induces on $T_{q}$ a circle $\{r\}\times \RR/\ZZ$.

\begin{proof}[Proof of proposition~\ref{p.separatrice}]
Assume that $g(W^u_+(p))=W^u_+(p)$, for some saddle point $p$ and
let $q$ be the sink of $f$ whose basin contains this separatrix.
The projection of the separatrix $W^s_{+}(p)$ on $T_{q}$
will be denoted by $\{r_0\}\times \RR/\ZZ$.

Since $g$ commutes with $f$, it induces on $T_{q}$ a
diffeomorphism $g/_f$. By Lemma~\ref{l.local}, $g$ is locally the composition of an
homothety and a rotation; hence the expression of $g/_f$ in the $(r,s)$
coordinates is a translation: $(r,s)\mapsto (r+\alpha,s+\beta)$.

The fact that $g$ leaves invariant $W^s_+(p)$ implies that
$g/_f( \{r_0\}\times \RR/\ZZ)=\{r_0+\alpha\}\times
\RR/\ZZ=\{r_0\}\times \RR/\ZZ$, so that $\alpha=0$. In
particular, this implies that $g/_f$ leaves invariant every orbit of
$\frac\partial{\partial s}$.  Consequently, $g$ leaves invariant
every $X$-orbit contained in the basin $W^s(q)$.

Let $q_1$ be a source such that $W^u(q_1)\cap W^s(q_0)\neq\emptyset$.
In the same way we endow the torus $T_{q_1}$ with affine coordinates,
and $g$ induces on $T_{q_1}$ a translation. The set $W^u(q_1)\cap
W^s(q_0)\neq\emptyset$ is open and invariant by $X$;
hence it contains an $X$-orbit, which is invariant by $g$. This orbit
induces on $T_{q_1}$ a circle of the form $\{r_1\}\times \RR/\ZZ$,
invariant by $g/_f$. This proves as before that $g$ leaves invariant
every $X$-orbit contained in $W^u(q_1)$.

Since $S$ is connected and $f$ is Morse-Smale,
for every sink or source $q$ there
is a finite sequence $q_0,q_1,\ldots,q_n=q$ of alternating sources and sinks
such that $W^u(q_i)\cap W^s(q_{i+1})$ or $W^s(q_i)\cap W^u(q_{i+1})$
is non empty for each $i= 0,\ldots,n-1$.
The discussion above hence proves that $g$ leaves invariant every
$X$-orbit contained in the basin or a sink or of a source, hence
leaves invariant every $X$-orbit.

This shows that for every point $x\in S\setminus Fix(f)$, there exists
$t(x)\in\RR$ such that $g(x)=X^{t(x)}(x)$. Futhermore, the continuous map
$x\mapsto t(x)$ is locally constant in the punctured neighborhoods
of the sinks and of the sources; hence (using the fact that $g$ and
$X$ commute with $f$) the map $t$ is constant on every basin of a sink or a source.
Since $t$ takes the same value on any two intersecting basins,
it follows that $t(x)$ is constant on the complement of the fixed points.
Thus $g$ belongs to the flow of $X$.
\end{proof}

\bigskip

In order to conclude the proof of Theorem~\ref{t.surface}, it remains to
show that there is a dense subset $\cD\subset \cD_2$ of
diffeomorphisms such that, for each $f\in \cD$, $C(f)$ is precisely the flow of the
corresponding vector field.
We first note that for the existence of an extra symmetry, it is necessary that for
any saddle $p\in \sigma_f$, the two unstable separatrices belong to the basin
of a same sink $q$. Assuming that this topological condition is satisfied,
the proof is very similar to the argument on the circle:
we will exhibit a new invariant of differentiable conjugacy
which vanishes if the diffeomorphism has an extra symmetry.
We will then show that arbitrarily small perturbations
allow us to modify this invariant.

We first define precisely this invariant.
For every sink $q\in\omega_f$, we consider the
orbit space $T_q$ of $W^s(q)\setminus\{q\}$, with its structure of an affine torus.
For every saddle $p$ whose unstable manifold has a
unique sink $q$ in the $\omega$-limit set, let $\{r_+(p)\}\times S^1$
and $\{r_-(p)\}\times S^1$ be the projections of the
separatrices $W^u_+(p)$ and $W^u_-(p)$ on the affine torus $T_{q}$.
We consider the distance $|r_+-r_-|\in[0,\frac12]$  between the two points
$r_+,r_-\in S^1=\RR/\ZZ$.

With these notations, for every saddle $p$ in $\sigma_f$ we define
$$
\delta^u(p)   =  \left\{
\begin{array}{l}
\frac12-|r_+(p)-r_-(p)| \mbox{ if $W^u_+(p),W^u_-(p)$ have the same $\omega$-limit set;}\\
~\\
  + \infty \mbox{ otherwise.} \\
\end{array}
\right. 
$$
As we'll explain in the proof of Lemma~\ref{l.opposite}, the
number $\delta^u(p)$ measures whether the projections of the two
separatrices on an affine torus can be exchanged by a rotation.

The invariant of the dynamics we will work with is defined by
$$\delta^u(f)=\sup_{p\in \sigma_f} \delta^u(p).$$

We conclude the proof of Theorem~\ref{t.surface} with the following two lemmas.

\begin{lemm}\label{l.opposite} Let $f$ be in $\cD_2$.
If its centralizer $C(f)$ is isomorphic to $\RR\times \ZZ/2\ZZ$, then $\delta^u(f)=0$
\end{lemm}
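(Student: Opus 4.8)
The plan is to argue by contrapositive, or equivalently to show directly that an extra symmetry forces the separatrices of every saddle to be symmetric on the corresponding affine torus. Suppose $C(f)$ is isomorphic to $\RR\times\ZZ/2\ZZ$. By Proposition~\ref{p.separatrice}, the nontrivial element of the $\ZZ/2\ZZ$ factor is represented by some $g\in C(f)$ that does \emph{not} preserve any unstable separatrix of any saddle; in particular, for each saddle $p\in\sigma_f$, the diffeomorphism $g$ exchanges the two unstable separatrices $W^u_+(p)$ and $W^u_-(p)$. The first consequence I would draw is that, for every saddle $p$, the two separatrices $W^u_+(p)$ and $W^u_-(p)$ \emph{must} have the same sink $q$ in their $\omega$-limit set: indeed $g$ sends the $\omega$-limit set of one separatrix to the $\omega$-limit set of the other, and $g$ fixes every sink (again by the ``distinct determinants'' hypothesis defining $\cD_1$), so the two sinks coincide. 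Hence $\delta^u(p)\neq+\infty$ for all $p$, and it remains to show $\delta^u(p)=0$, i.e. $|r_+(p)-r_-(p)|=\tfrac12$.

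Next I would pass to the affine torus $T_q$, where $q$ is the common sink of the two separatrices of a fixed saddle $p$. Since $g$ commutes with $f$, it induces a diffeomorphism $g/_f$ of $T_q$; since $g$ agrees near $q$ with a conformal linear map (Lemma~\ref{l.local}), the induced map $g/_f$ is, in the affine coordinates $(r,s)$ chosen so that $X$ projects to $\partial/\partial s$ and $Z$ to $\partial/\partial r$, a translation $(r,s)\mapsto(r+\alpha,s+\beta)$. The separatrices $W^u_+(p)$ and $W^u_-(p)$ project to the two circles $\{r_+(p)\}\times S^1$ and $\{r_-(p)\}\times S^1$, and $g$ exchanges them, so $g/_f$ sends $\{r_+\}\times S^1$ to $\{r_-\}\times S^1$ and vice versa. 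This gives $r_+ + \alpha = r_-$ and $r_- + \alpha = r_+$ in $\RR/\ZZ$, hence $2\alpha = 0$, i.e. $\alpha\in\{0,\tfrac12\}$. The value $\alpha=0$ would force $r_+=r_-$, meaning the two separatrices project to the \emph{same} circle in $T_q$ — but the unstable manifold $W^u(p)$ is an embedded line whose two ends accumulate on $q$, and its projection to $T_q$ consists of two distinct disjoint circles (they are distinct orbits of the induced vector field, since $W^u_+(p)$ and $W^u_-(p)$ are distinct $f$-invariant curves); so $\alpha=0$ is impossible. Therefore $\alpha=\tfrac12$, which says exactly $|r_+(p)-r_-(p)|=\tfrac12$ in $\RR/\ZZ$, i.e. $\delta^u(p)=0$. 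As this holds for every saddle $p\in\sigma_f$, we conclude $\delta^u(f)=\sup_p\delta^u(p)=0$.

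The step I expect to be the only real point requiring care is the verification that $r_+(p)\neq r_-(p)$, i.e. that the two unstable separatrices of $p$ genuinely project to two \emph{distinct} circles in the affine torus $T_q$ (so that $\alpha=0$ can be excluded). This is where the geometry of Morse--Smale dynamics on a surface enters: $W^u_+(p)$ and $W^u_-(p)$ are disjoint $f$-invariant submanifolds of $W^s(q)\setminus\{q\}$, hence their images under $\pi_q$ are disjoint closed orbits of $(\pi_q)_*X$ on $T_q$, and two distinct parallel circles on the affine torus with the same homology class necessarily sit at two distinct values of the transverse coordinate $r$. Everything else — the reduction to the contrapositive, the identification of the extra symmetry via Proposition~\ref{p.separatrice}, and the computation that a translation exchanging two parallel circles must be the half-translation — is routine. (Note that one also wants to use, as in the proof of Proposition~\ref{p.centre-cercle}, that a periodic orientation-preserving map with a fixed point is the identity, to be sure the $\ZZ/2\ZZ$-generator can be normalized to the form handled above; but this was already absorbed into the structure statement for $C(f)$ preceding the lemma.)
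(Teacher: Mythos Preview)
Your argument is correct and follows essentially the same route as the paper's proof, only with considerably more detail spelled out. The paper simply takes $g$ to be the element of order~$2$, observes (implicitly via Proposition~\ref{p.separatrice}) that it must exchange the two unstable separatrices of every saddle, and then asserts directly that $g$ projects on $T_q$ to the translation by $(1/2,0)$; your derivation of $\alpha=1/2$ from $2\alpha=0$ together with the exclusion of $\alpha=0$ is exactly the content behind that assertion. One small remark: you can shortcut the geometric verification that $r_+\neq r_-$ by noting instead that $\alpha=0$ would make $g/_f$ preserve each circle $\{r\}\times S^1$, hence $g$ would preserve $W^u_+(p)$, and Proposition~\ref{p.separatrice} would then force $g$ into the flow of $X$, contradicting its order~$2$.
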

\begin{proof} Let $g$ be the element of order $2$;
it exchanges the unstable separatrices of every saddle $p$ of $f$.
In particular, $W^u_+(p)$ and $W^u_-(p)$ are contained in the basin of the same sink $q$.
Moreover, $g$ projects on $T_{q}$ to the translation by $(1/2,0)$.
This implies that we must have $\delta^u(p)=0$ and we are done.
\end{proof}

\begin{lemm} For $f$ in a dense subset $\cD\subset \cD_2$
we have $\delta^u(f)\neq 0$.
\end{lemm}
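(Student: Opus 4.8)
The plan is to show that given any $f \in \cD_2$, arbitrarily small $C^1$-perturbations (keeping the map inside $\cD_2$) can realize a nonzero value of $\delta^u(f)$; since being in $\cD_2$ is preserved and $\delta^u(f) \neq 0$ is then generic (an open dense condition within $\cD_2$, or at least satisfied on a dense subset), this yields the density statement and, combined with Lemma~\ref{l.opposite}, forces $C(f) = \RR$ for $f$ in a dense subset $\cD$. The mechanism is entirely parallel to the proof that forces the Mather invariant to vanish (Section~\ref{ss.contre-exemple} and the sphere case Section~\ref{ss.vanish2}): there we composed $f$ with diffeomorphisms supported on small disks deep in a basin, which acted on the relevant orbit-space torus by prescribed diffeomorphisms; here we do the same, but instead of killing a diffeomorphism we move the projection of one unstable separatrix relative to the other on the affine torus $T_q$.

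First I would fix $f \in \cD_2$, a saddle $p \in \sigma_f$, and a sink $q$ whose basin contains at least one separatrix $W^u_+(p)$ (such a saddle exists since $\sigma_f \neq \emptyset$ and $f$ is Morse-Smale). If the two separatrices of $p$ land in different sinks, then $\delta^u(p) = +\infty \neq 0$ and there is nothing to do for that $f$; so assume both separatrices enter the basin of $q$, with projections $\{r_+(p)\}\times S^1$ and $\{r_-(p)\}\times S^1$ on $T_q$. Choose a small disk $D \subset W^s(q)$ deep in the basin (between iterates, so that the $f^i(D)$, $i \geq 0$, are pairwise disjoint and contained in a fixed small neighborhood of $q$), chosen so that $D$ meets the orbit of $W^u_+(p)$ but is disjoint from the orbit of $W^u_-(p)$; this is possible because the two separatrix orbits are distinct closed $f$-invariant curves in $W^s(q)\setminus\{q\}$. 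Then, exactly as in Lemma~\ref{l.compo} and the ``lift'' construction preceding the proof of Theorem~\ref{t=sphere}, compose $f$ with a diffeomorphism $h$ supported in $D$ (lifting a small diffeomorphism $\hat h$ of $T_q$ supported in the projected disk $\hat D$); the new map $g = f \circ h$ is still in $\cD_1$, agrees with $f$ near $\mathrm{Fix}(f)$ and outside the small neighborhood, and is still the time one map of a smooth vector field (by the conjugacy trick of Lemma~\ref{l.perturb}, since $g$ is conjugate to $f$ via the diffeomorphism that is $f^i h f^{-i}$ on $f^i(D)$), hence $g \in \cD_2$. The effect on the orbit-space data is that the projection of $W^u_+(p)$ is pushed by $\hat h$ to $\{r_+(p) + t\}\times S^1$ for a $t$ we can prescribe (by choosing $\hat h$ to be, near that circle, a small translation in the $r$-direction), while the projection of $W^u_-(p)$ is unchanged. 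Thus $|r_+ - r_-|$ — and hence $\delta^u(p)$, hence $\delta^u(f) = \sup_p \delta^u(p) \geq \delta^u(p)$ — is moved to a nonzero value, and this can be done with $g$ arbitrarily $C^1$-close to $f$ (Corollary~\ref{c.varep}-type estimates control the $C^1$-size of the lift).

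The remaining bookkeeping is: (i) the perturbation must be arranged so it does not destroy the defining properties of $\cD_1$ (linearity near fixed points and distinct determinants) — this is automatic since $h$ is supported away from $\mathrm{Fix}(f)$ and $g$ still agrees with $f$ near each fixed point; (ii) if $f$ has several saddles one only needs $\delta^u(f) \neq 0$, i.e. it suffices to make $\delta^u(p) \neq 0$ for a \emph{single} saddle $p$, so no simultaneous control over all saddles is needed; (iii) one should note that the value $\delta^u(f)$ varies continuously under $C^1$-perturbations that keep the combinatorics fixed, so $\{f \in \cD_2 : \delta^u(f) \neq 0\}$ is actually open and dense in $\cD_2$, not merely dense. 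Setting $\cD$ to be this set (or its intersection with the open set where the construction applies) and combining with Lemma~\ref{l.opposite} and the Corollary to Proposition~\ref{p.separatrice} gives $C(f) = \RR$ for all $f \in \cD$, completing the proof of Theorem~\ref{t.surface}.

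The main obstacle, as in the one-dimensional and spherical cases, is purely technical: verifying that the composed map $g = f \circ h$ remains the time one map of a \emph{smooth} vector field and stays in $\cD_2$. This is handled by the conjugacy argument already used in Lemma~\ref{l.perturb} — one realizes $g$ as $H f H^{-1}$ for an explicit $H$ supported in the forward iterates of $D$, so that $g$ is the time one map of $H_* X$, which is smooth on the punctured surface and, because the perturbation is supported in the region where one has full freedom to adjust separatrix positions without touching the dynamics near $q$, extends smoothly across $q$. Everything else is a direct transcription of the arguments of Sections~\ref{s.flow}--\ref{ss.sphere} to the surface setting, replacing ``the Mather invariant'' by ``the relative position $|r_+ - r_-|$ of the two separatrix circles on $T_q$.''
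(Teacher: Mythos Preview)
Your approach has a genuine gap at the step where you claim that $g = f\circ h$ remains in $\cD_2$. The conjugacy $H$ you invoke (equal to $f^ihf^{-i}$ on $f^i(D)$ and to the identity elsewhere) is a diffeomorphism of $S\setminus\{q\}$, but it is \emph{not} $C^1$ at the sink $q$: since $f$ is conformal near $q$, the $C^1$-distance of $f^ihf^{-i}$ from the identity on $f^i(D)$ equals that of $h$ on $D$ and does not decay as $i\to\infty$. Hence $H_*X$ is only defined on the punctured surface and does not extend smoothly across $q$. In the language of Lemma~\ref{l.perturb} this is exactly the statement that $\psi_{g,q}=\psi_{f,q}\circ\hat h^{-1}=\hat h^{-1}\neq \mathrm{id}$: you have re-introduced a nontrivial Mather-type obstruction at $q$, so $g\notin\cD_2$. (One can also see this directly: for $\hat h$ supported in a disk $\hat D\subset T_q$ that does not contain a full circle $\{r\}\times S^1$, the only way $\hat h$ can preserve the projected vector field $\partial/\partial s$ is $\hat h=\mathrm{id}$.)

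The paper avoids this problem by a much simpler route: since $f\in\cD_2$ is already the time-one map of a smooth vector field $X$, one perturbs $X$ itself, with support in the complement of a neighborhood of the fixed points. The time-one map of the perturbed field is automatically in $\cD_2$, is $C^1$-close to $f$, and the perturbation can be chosen to deflect one unstable separatrix of a chosen saddle so that its projection $r_+(p)$ in $T_q$ moves while $r_-(p)$ does not. This immediately gives $\delta^u(g)\neq 0$. No fragmentation, lifts, or conjugacy tricks are needed here --- the machinery of Sections~\ref{ss.contre-exemple} and~\ref{ss.vanish2} was designed to \emph{produce} a vector field for a map that did not yet have one, whereas here you already have the vector field and need only nudge it.
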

\begin{proof} Let $f\in\cD_2$ be such that $\delta^u(f)=0$ and let $X$
be the flow associated with~$f$. The unstable separatrices of any saddle $p$
are contained in the basin of a sink $q$.
By an arbitrarily small perturbation of $X$ with support in
the complement of a neighborhood of the fixed points, one can change
the projection in $T_q$ of any of the two separatrices.
Then the time one map of the perturbed flow is a diffeomorphism $g\in\cD_2$
arbitrarily close to $f$ and such that $\delta^u(g)\neq 0$.
\end{proof}

\section{Huge centralizers in dimension larger than~$3$}

\subsection{Reduction to the existence of periodic islands}
Theorem~\ref{t.wild} is a consequence of the next result:

\begin{theo}\label{t.BD} Let $M$ be a compact manifold of dimension $d\geq 3$. Then there is a non-empty open subset $\cO\subset \Diff^1(M)$ and a dense part $\cD_0\subset \cO$ such that any diffeomorphism $f\in\cD_0$ has a periodic point $x$ such that $Df^{n}(x)= Id\in GL(T_xM)$ where $n$ is the period of $x$.
\end{theo}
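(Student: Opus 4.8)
The goal is to produce a $C^1$-open set $\cO$ in which a dense set $\cD_0$ of diffeomorphisms admit a periodic point $x$ of period $n$ with $Df^n(x)=\mathrm{Id}$. Since such a condition (the $n$-th power of the derivative at a periodic point being exactly the identity) is highly non-generic, the only hope is to work inside an open set of \emph{wild} diffeomorphisms, where a periodic point with derivative close to a suitable model cannot be ``cleaned up'' by perturbation and in fact is forced, on a dense subset, to be \emph{exactly} a rotation-free identity-linear-part point. So first I would recall the Bonatti--Díaz construction of $C^1$-robustly transitive (or at least robustly ``wild'') diffeomorphisms on any manifold of dimension $\geq 3$: one builds, via a blender or via Bonatti--Díaz's ``gemgebra''/heterodimensional constructions, a nonempty $C^1$-open set $\cO\subset\Diff^1(M)$ such that every $g\in\cO$ has a periodic point whose homoclinic class contains periodic points of different indices, and more importantly such that periodic orbits with prescribed derivative (up to $C^1$-perturbation) are dense in a robust way. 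The key known mechanism is that in such an open set one can, by an arbitrarily $C^1$-small perturbation, create a periodic point whose derivative along the orbit is any prescribed $C^1$-small perturbation of the original — in particular, one can realize a periodic orbit along which the return derivative is the identity matrix.

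\textbf{Key steps, in order.} (1) Fix $M$ with $\dim M = d\geq 3$; invoke the Bonatti--Díaz machinery to produce a nonempty $C^1$-open $\cO\subset\Diff^1(M)$ consisting of diffeomorphisms having a robust heterodimensional cycle (or robustly a blender), so that in particular every $f\in\cO$ has infinitely many periodic points of at least two distinct stable indices, densely in some fixed region. (2) Show that for $f\in\cO$, arbitrarily $C^1$-close, one finds $g$ with a periodic saddle $p$ of period $n$ such that $Dg^n(p)$ has eigenvalues on both sides of the unit circle; by Franks' lemma (perturbation of the derivative along a periodic orbit) one can then further perturb, keeping $g$ in $\cO$, to make the product of the derivatives along the orbit equal to any chosen matrix sufficiently close to the original along the orbit. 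The only subtlety is that Franks' lemma lets us prescribe the \emph{product} $Dg^n(p)$, not each individual $Dg(g^i(p))$, but that is exactly what we need. (3) Choosing the target product matrix to be the identity gives $g$ with $Dg^n(p)=\mathrm{Id}$; do this for a countable dense family to get the dense set $\cD_0\subset\cO$. Since this is only the \emph{reduction} theorem (Theorem~\ref{t.BD}), the downstream work — exhibiting the actual large centralizer of Theorem~\ref{t.wild}, by pushing the island so that the return map on it is literally the identity and then letting arbitrary compactly supported diffeomorphisms of the island commute with $f$ — is deferred to the subsequent subsections and need not be addressed here.

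\textbf{Main obstacle.} The real difficulty is \emph{robustness}: one must ensure that the open set $\cO$ is genuinely $C^1$-open, i.e.\ that the existence of \emph{some} periodic orbit admitting (after a small perturbation) an identity return-derivative persists under $C^1$-perturbations of $f$ itself. A single periodic orbit with $Df^n=\mathrm{Id}$ is not robust — it is destroyed by a generic perturbation — so one cannot simply ask the open set to be ``diffeomorphisms with an identity-linear-part periodic point.'' Instead $\cO$ must be defined by a robust condition (existence of a robust heterodimensional cycle / blender / a region where periodic points of two indices accumulate robustly), and then one uses Franks' lemma \emph{inside} $\cO$ to realize the identity return map on a dense subset. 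Verifying that the Bonatti--Díaz open sets furnish enough periodic orbits, with enough freedom in their return derivatives, to hit the identity by a $C^1$-small perturbation that does not leave $\cO$, is the technical heart of the argument; everything else is a routine application of Franks' lemma and a density/enumeration argument.
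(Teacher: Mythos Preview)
Your outline has the right architecture --- a robust open set $\cO$ built from blenders/heterodimensional cycles, then density via perturbation of the derivative along periodic orbits --- but step (3) hides a genuine gap. Franks' lemma lets you replace $Dg^n(p)$ by any matrix \emph{sufficiently close to the original product}; it does \emph{not} let you hit the identity unless you first explain why some periodic orbit already has return derivative close to $\mathrm{Id}$. Having ``eigenvalues on both sides of the unit circle'' is not enough: a saddle can have spectrum $\{1/2,\,2,\,\dots\}$ and no $\varepsilon$-perturbation along the orbit will make that product the identity. You need a mechanism that first collapses all eigenvalues to a common modulus, and then a separate mechanism that forces that common modulus to be close to $1$.

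The paper supplies exactly these two missing ingredients. First, the open set $\cO$ is engineered so that the chain recurrence class of a fixed saddle $x_f$ admits \emph{no dominated splitting} (this is arranged by placing, in the same class, periodic points with complex eigenvalues of every rank $(i,i+1)$). Absence of domination is what triggers the Bonatti--D\'\i az--Pujals result \cite{BDP}: on a set with transitions and no domination, an arbitrarily small perturbation produces a periodic point whose return derivative is a \emph{homothety}. Second, $\cO$ is built so that $x_f$ is homoclinically related to points with log-Jacobian of both signs; hence the set $\Sigma_\varepsilon=\{y\sim x_f:|J_f(y)|<\varepsilon\}$ is dense in the class for every $\varepsilon>0$, still has transitions, and still lacks domination. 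Applying \cite{BDP} inside $\Sigma_\varepsilon$ yields a periodic point whose return derivative is a homothety of ratio $e^{J}$ with $|J|<2\varepsilon$, and \emph{now} a further Franks-type perturbation reaches the identity. Your sketch should replace the bare invocation of Franks' lemma by this two-step reduction (no domination $\Rightarrow$ homothety; controlled Jacobian $\Rightarrow$ ratio near $1$).
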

 Before explaining the proof of Theorem~\ref{t.BD} we explain here why it implies Theorem~\ref{t.wild}.

\begin{coro}\label{c.wild} Let $M$ be a compact manifold of dimension $d\geq 3$. Then there is a non-empty open subset $\cO\subset \Diff^1(M)$ and a dense part $\cD\subset \cO$ such that any diffeomorphism $f\in\cD$ has the following property:

there is an embedded ball $D_f \subset M $ of dimension $d$ and an integer $n>0$ such that $f^i(D_f)\cap D_f=\emptyset$ for $i\in\{1,\dots,n-1\}$
and the restriction of $f^n$ to $D_f$ is the identity map.
\end{coro}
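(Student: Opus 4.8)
The goal is to deduce Corollary~\ref{c.wild} from Theorem~\ref{t.BD}: starting from a diffeomorphism $f\in\cD_0$ having a periodic point $x$ of period $n$ with $Df^n(x)=\id$, I want to perturb $f$ (staying $C^1$-close, hence staying in the open set $\cO$, and remaining dense) so that $f^n$ is \emph{exactly} the identity on a small embedded $d$-ball $D_f$ around $x$, with $D_f,f(D_f),\dots,f^{n-1}(D_f)$ pairwise disjoint. The disjointness of the iterates is automatic for a sufficiently small ball around a periodic point of period $n$ (the orbit of $x$ consists of $n$ distinct points, so small neighborhoods of these points are disjoint and can be taken as $f^i(D_f)$), so the whole content is the local normalization: making the return map $g:=f^n$ equal to $\id$ near $x$.

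\medskip
First I would work in a local chart centered at $x=0$, in which $g$ fixes $0$ and $Dg(0)=\id$. In such a chart $g(y)=y+\rho(y)$ with $\rho(0)=0$ and $D\rho(0)=0$, so on a ball $B(0,r)$ the $C^1$-size of $g-\id$ tends to $0$ as $r\to 0$. The perturbation is then carried out by an interpolation: fix a smooth bump function $\chi$ equal to $1$ on $B(0,r/2)$ and supported in $B(0,r)$, and set $\tilde g(y)=y+(1-\chi(y))\rho(y)$, so that $\tilde g=\id$ on $B(0,r/2)$ and $\tilde g=g$ outside $B(0,r)$. One checks that $\|\tilde g-g\|_{C^1}=\|\chi\cdot\rho\|_{C^1}$ is controlled by $\sup_{B(0,r)}\|D\rho\|+r^{-1}\sup_{B(0,r)}\|\rho\|$, which is $o(1)$ as $r\to 0$ precisely because $\rho$ vanishes to first order at $0$; hence for $r$ small $\tilde g$ is as $C^1$-close to $g$ as we wish. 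Finally I transport this modification back: since $g=f^n$ and the orbit points $x,f(x),\dots,f^{n-1}(x)$ are distinct, I can modify $f$ only in a neighborhood of one orbit point — replacing $f$ by $f'$ which equals $\tilde g\circ f^{-(n-1)}$ on a small neighborhood of $f^{n-1}(x)$ and equals $f$ elsewhere — so that $(f')^n=\tilde g$ near $x$ while $f'$ is $C^1$-close to $f$. Take $D_f=B(0,r/2)$ in the chart; then $(f')^n|_{D_f}=\id$ and the iterates $f'^i(D_f)$ are disjoint, which is the assertion of the corollary (with $f'$ in place of $f$ and $\cD$ the set of such perturbed maps, which is dense in $\cO$ since $\cD_0$ is and the perturbation is arbitrarily small).

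\medskip
The one subtlety — and the step I expect to require the most care — is ensuring that the perturbation $\tilde g$ is genuinely a \emph{diffeomorphism} of the ball onto its image: the interpolation formula produces a $C^1$ map that is $C^1$-close to $g$, and a $C^1$-small perturbation of a diffeomorphism is again a diffeomorphism onto its image by the inverse function theorem applied uniformly, but one must check this on the whole ball (not just near $0$), which is where the estimate $\|\tilde g-g\|_{C^1}\to 0$ is used again: for $r$ small, $D\tilde g$ stays uniformly invertible and $\tilde g$ stays injective on $\overline{B(0,r)}$. A second, more cosmetic point: to land in $\cD$ (the perturbed family) one should record that the property ``$f^n=\id$ on an embedded $d$-ball whose first $n$ iterates are disjoint'' is satisfied by $f'$ for \emph{every} $f$ in the dense set $\cD_0$ after an arbitrarily small perturbation; since $\cD_0$ is dense in $\cO$ and each $f'$ lies in $\cO$ (the perturbation is small and $\cO$ is open), the set of such $f'$ is dense in $\cO$, completing the reduction of Theorem~\ref{t.wild} to Theorem~\ref{t.BD}.
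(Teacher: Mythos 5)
Your proposal is correct and takes essentially the same approach the paper intends; the paper's proof of Corollary~\ref{c.wild} is a single sentence asserting the existence of the local perturbation, and your bump-function interpolation $\tilde g(y)=y+(1-\chi(y))\rho(y)$ is exactly the standard way to realize it, with the key estimate (that $\|\chi\rho\|_{C^1}\to 0$ as $r\to 0$ precisely because $\rho$ and $D\rho$ vanish at $0$) correctly identified. One small remark that streamlines the diffeomorphism issue you flagged at the end: rather than checking injectivity of $\tilde g$ directly, note that $H:=\tilde g\circ g^{-1}$ is $C^1$-close to the identity, compactly supported in $g(B(0,r))$, and hence automatically a diffeomorphism of $M$; then $f':=H\circ f$ is the desired perturbation (supported in $f^{n-1}(B(0,r))$) and $(f')^n=\tilde g$ on $B(0,r)$ without any further gluing argument.
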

\begin{proof} Given any $f_0$ in the set $\cD_0$ given by Theorem~\ref{t.BD}, given any $C^1$-neighborhood  $\cU$ of $f_0$ and given any neighborhood $V$ of the orbit $Orb(x,f_0)$, there exists $f\in U$ that agrees with $f$ in the complement of $V$, and such that $f^n$ is the identity map in a neighborhood of $x$.
\end{proof}

\noindent\begin{proof}[End of the proof of Theorem~\ref{t.wild}] Consider $f$ in the set $\cD$ constructed in Corollary~\ref{c.wild}, and let $D$ be a periodic ball of period $n$ such that $f^n$ coincides with the identity map on $D$ (hence on the $f$-orbit of $D$). Let $\varphi\colon\RR^d\to M$ be a smooth embedding such that $\varphi(\DD^d)=D$. To any diffeomorphism $h\in\Diff^1(\RR^d,\RR^d\setminus \DD^d)$ we associate $h_0\colon M\to M$, the diffeomorphism  equal to the identity map in the complement of $D$ and equal to $\varphi h\varphi^{-1}$ on $D$. For each $i\in\ZZ$ we set $h_i= f^i\circ h_0\circ f^{-i}$; notice that $h_i$ is a diffeomorphism of $M$ with support contained in $f^i(D)$, and $h_{i+n}=h_i$. We denote by $h_\varphi\colon M\to M$ the diffeomorphism of $M$ that coincides with $h_i$ on $f^i(D)$, for every $i\in\ZZ$, and with the identity map in the complement of $\bigcup_i f^i(D)$.

By construction, $h_\varphi$ commutes with $f$. Then $h\mapsto h_\varphi$ is an injective homomorphism from $\Diff^1(\RR^d,\RR^d\setminus \DD^d)$ to $C(f)$.
\end{proof}

\subsection{Existence of periodic orbits tangent to  the identity map}

The open set $\cO$ produced in Theorem~\ref{t.BD} and Corollary~\ref{c.wild} is analoguous to those  built  in \cite{BD}.  We start by recalling some notions.

Let $f$ be a diffeomorphism, and let $x\in Per_{hyp}(f)$ a hyperbolic periodic point. Given another hyperbolic periodic point $y$ of $f$, we say that $x$ and $y$ are \emph{homoclinically related} and we write $x\sim y$ if the stable and the unstable manifolds of the orbit of $x$ transversely intersect the unstable and the stable manifolds of the orbit of $y$, respectively.  Let  $\Si(x,f)=\{y\in Per_{hyp}(f), y\sim x\}$. The \emph{homoclinic class} $H(x,f)$ is the closure $H(x,f)=\overline{\Si(x,f)}$.

A point $x\in M$ is \emph{chain recurrent} if for every $\delta>0$ there exists a $\delta$-pseudo-orbit $x=x_0,x_1,\dots, x_k=x$. The \emph{chain recurrence class $C(x,f)$} of a chain recurrent point $x$ is the set of points $y$ such that, for every $\delta>0$, there is a $\delta$-pseudo orbit starting at $x$ and ending at $y$ and a $\delta$-pseudo orbit starting at $y$ and ending at $x$.

For any periodic point $y\in Per(f)$, let $\pi(y)$ be its period, and let $$J_f(y)=\frac1{\pi(y)}\log| Det Df^{\pi(y)}(y)|,$$
be the sum of the Lyapunov exponents of $y$.

Recall  that an $f$-invariant   set $\La$ admits a \emph{dominated splitting} if there is an $Df$-invariant decomposition $TM|_\La=E\oplus F$ of the tangent bundle $TM$ over $\La$ as a direct sum of two invariant subbundles $E$ and $F$ such that:
\begin{itemize}
\item the dimension $dim(E(x))$ is independent on $x\in\La$;
\item the vectors in $E$ are uniformly less expanded than the vectors in $F$; that is, there exists $N\in \NN$ such that for any $x\in \La$ and any non-zero vectors $u\in E(x)$ and $v\in F(x)$:
$$\frac{\|Df^N(u)\|}{\|u\|}<\frac12\frac{\|Df^N(v)\|}{\|v\|}.$$
\end{itemize}
The bundles $E$ and $F$ of a dominated splitting are always continuous and extend continuously to a dominated splitting over the closure of $\La$ (elementary properties of dominated splitting are described in \cite[Appendix B.1]{BDV}). As a direct consequence, \emph{if a set does not admit a dominated splitting, then the same holds for any dense subset of it.}

Now Theorem~\ref{t.BD} follows from
\begin{prop}\label{p.nodomination0} Let $M$ be a compact manifold of dimension $dim(M)\geq 3$. There is a non empty open subset $\cO\subset \Diff^1(M)$ and a continuous function $f\in \cO\mapsto x_f\in M$ such that, for every $f\in \cO$:
\begin{itemize}
\item $x_f$ is a hyperbolic periodic saddle point of $f$ with $J_f(x)>0$;
\item there exists $y_f\in\Si(x_f,f)$ such that $J_f(y_f)<0$;
\item the chain recurrent class $C(x_f,f)$ does not admit a dominated splitting.
\end{itemize}
\end{prop}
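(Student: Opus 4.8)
\textbf{Proof proposal for Proposition~\ref{p.nodomination0}.}

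The plan is to build $\cO$ as a (relatively) open piece of the Bonatti--D\'iaz construction of robustly transitive, non-hyperbolic diffeomorphisms with a ``blender''-type mechanism, exactly the open sets exhibited in~\cite{BD}. First I would reduce to the case $\dim(M)=3$: on any manifold of dimension $d\geq 3$ one isolates a $3$-dimensional invariant ``plug'' (supported in a small chart $\cong D^3\times D^{d-3}$, with a strong normally hyperbolic behaviour in the extra $D^{d-3}$ directions) inside which the whole construction takes place; the extra directions contribute a trivially dominated factor that does not interfere with the lack of domination in the central $3$-dimensional block. So assume $M$ is a $3$-manifold.

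Next I would recall, following~\cite{BD}, that there is a non-empty $C^1$-open set $\cO$ of diffeomorphisms each having a hyperbolic periodic saddle $x_f$ whose continuation depends continuously (indeed $C^1$-continuously) on $f$, and whose homoclinic class $H(x_f,f)$ is robustly non-trivial, coincides robustly with the chain recurrence class $C(x_f,f)$, and contains periodic points of two different indices. Concretely: the construction of~\cite{BD} produces, in $\cO$, a saddle $x_f$ of stable index $1$ (say) and a heteroclinic cycle linking it to a saddle of stable index $2$, arranged through a blender so that both saddles, together with a whole family of saddles interpolating their Jacobians, lie in $\Si(x_f,f)$. Choosing the linear data in the local model so that $x_f$ is expanding in Jacobian ($J_f(x_f)>0$) while the index-$2$ partner $y_f$ is contracting in Jacobian ($J_f(y_f)<0$) gives the first two bullets; since the blender and the heteroclinic cycle are $C^1$-robust, $x_f$ and $y_f$ vary continuously with $f$ and remain homoclinically related throughout $\cO$. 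That $C(x_f,f)=H(x_f,f)$ robustly is part of the package in~\cite{BD} (the heteroclinic cycle forces the two saddles into the same chain class, and the blender forces the chain class to coincide with the homoclinic class).

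The heart of the matter is the third bullet: $C(x_f,f)$ has no dominated splitting, robustly in $f$. The key point is that the coexistence, \emph{inside a single homoclinic class}, of periodic points $p$ with $J_f(p)>0$ and periodic points $q$ with $J_f(q)<0$ obstructs domination. Indeed, a dominated splitting $TM|_{C(x_f,f)}=E\oplus F$ over the whole class would have constant $\dim E=k$; restricting to the periodic orbit of $x_f$ and of $y_f$, the dominated splitting must agree with (a grouping of) the Oseledets/hyperbolic splitting, which pins down $k$ to be the stable dimension of each — but $x_f$ and $y_f$ have different stable dimensions, a contradiction. More robustly, I would run the standard perturbation argument (Franks' lemma along the orbits plotted out through the blender and the heteroclinic cycle): if the class had a dominated splitting $E\oplus F$ with $\dim E=k$, then using the transitions one produces, by an arbitrarily small $C^1$-perturbation still inside $\cO$, a periodic orbit inside the class on which the would-be $E\oplus F$ splitting is violated — e.g.\ a periodic point whose derivative in the $E\oplus F$ coordinates has a complex eigenpair mixing $E$ and $F$, or simply one with a $k$-dimensional stable space incompatible with $E$; this contradicts the continuity/robustness of dominated splittings (the last sentence quoted before the Proposition). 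Since ``no dominated splitting'' passes to dense subsets and is detected through finitely many periodic orbits whose continuations persist on $\cO$, shrinking $\cO$ if necessary makes the property hold on all of $\cO$. I expect this robust absence of domination --- making precise the interaction between the blender, the heteroclinic cycle of mismatched indices, and the non-existence of $E\oplus F$ --- to be the main obstacle, and it is exactly where one leans most heavily on~\cite{BD} rather than re-proving anything from scratch. Finally, the continuity of $f\mapsto x_f$ (and of $y_f$) is just the $C^1$-robustness of hyperbolic periodic orbits and of transverse heteroclinic intersections, so the map $f\in\cO\mapsto x_f\in M$ is continuous as required.
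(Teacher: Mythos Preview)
Your proposal has two genuine gaps, and the paper's own argument takes a different route that avoids both.

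First, the reduction to dimension $3$ via a normally hyperbolic plug does not work. If you build the dynamics inside a chart $D^3\times D^{d-3}$ with strong normal hyperbolicity in the $D^{d-3}$ directions, then the chain recurrence class \emph{does} admit a dominated splitting: the strong stable normal bundle against the rest (or the rest against the strong unstable normal bundle) is already dominated. The proposition asks that $C(x_f,f)$ admit \emph{no} dominated splitting whatsoever, for any dimension of $E$; a $3$-dimensional plug only obstructs the splittings that live inside the center. You must obstruct splittings with $\dim E=1,\dots,d-1$ separately, and this cannot be outsourced to a lower-dimensional model.

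Second, your main obstruction argument is incorrect on two counts. Since $y_f\in\Si(x_f,f)$, the points $x_f$ and $y_f$ are homoclinically related and therefore have the \emph{same} stable index; the sign of $J_f$ has nothing to do with index. And even if two periodic points of different indices lie in the same class (this does happen in the construction, via a blender and a point $z_f$ of index $2$), that by itself does not rule out a dominated splitting: a dominated $E\oplus F$ need not coincide with the hyperbolic splitting at a periodic point, only with some coarser invariant grouping of eigenspaces. A class can perfectly well carry a dominated splitting $E^{ss}\oplus E^{cu}$ with $\dim E^{ss}=1$ while containing saddles of indices $1$ and $2$.

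The paper's proof replaces this with a clean linear-algebra obstruction: a periodic point with a \emph{complex (non-real) eigenvalue of rank $(i,i+1)$} forbids any dominated splitting with $\dim E=i$ over any invariant set containing it. The construction then produces, robustly, periodic points $x_{i,f}$ (for $i=2,\dots,d-1$) homoclinically related to $x_f$ with complex eigenvalues of rank $(i,i+1)$, together with a point $z_f$ of index $2$ with a complex eigenvalue of rank $(1,2)$, forced into the chain recurrence class of $x_f$ by a blender. These finitely many robust periodic points rule out every possible $\dim E$, and since each of them persists on a $C^1$-open set, the absence of domination holds throughout $\cO$ without any further perturbation or density argument.
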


We now deduce Theorem~\ref{t.BD} from Proposition~\ref{p.nodomination0}

\noindent\begin{proof}[Proof of Theorem~\ref{t.BD}] We just repeat briefly here the proof given in \cite{BD}. Fix a diffeomorphism $f\in \cO$, and a neighborhood $\cU\subset\cO$ of $f$. We will prove that $\cU$ contains a diffeomorphism $g$ having a periodic orbit whose derivative at the period is the identity.

According to \cite{BC}, for $C^1$-generic diffeomorphisms, the chain recurrent class of every periodic orbit is equal to its homoclinic class. Hence there exists $f_0\in\cU$ such that $C(x_{f_0},f_0)=H(x_{f_0},f_0)$.

For every $\varepsilon>0$ we consider the set $$\Si_\varepsilon(f_0)=\{y\in\Si(x_{f_0},f_0), |J_{f_0}(y)|<\varepsilon\}.$$
Any two points in $\Si(x_{f_0},f_0)$ are homoclinicaly related. As a consequence, given a finite set $X\subset \Si(x_{f_0},f)$, there is a hyperbolic basic set of $f_0$ containing $X$.
From this fact and from the hypotheses $J_{f_0}(x_{f_0})>0$ and $J_{f_0}(y_{f_0})>0$, we deduce that, for every  $\varepsilon>0$, the set $\Si_\varepsilon(f_0)$ satifies the two following properties:
\begin{enumerate}
\item the set $\Si_\varepsilon(f_0)$ is dense in $\Si(x_{f_0},f_0)$, and hence in $H(x_{f_0},f_0)=C(x_{f_0},f_0)$; it follows that $\Si_\varepsilon(f_0)$ does not admit a dominated splitting;
\item the set $\Si_\varepsilon(f_0)$ admits \emph{transitions} as defined in \cite{BDP}. This is implied by the fact that, given any finite subset $X\subset\Si_\varepsilon(f_0)$, there is a hyperbolic basic set  $K_X$ containing $X$ whose periodic orbits are contained in $\Si_\varepsilon(f_0)$: $K_X\cap Per(f_0)\subset \Si_\varepsilon(f_0)$.
\end{enumerate}

Since $\Si_\varepsilon(f_0)$ admits transitions and does not admit a dominated splitting, \cite{BDP} implies that, for every $\delta>0$, there is a periodic point $x\in\Si_\varepsilon(f_0)$ and a $\delta$-small perturbation $g_0$ of $f_0$ agreeing with $f_0$ on the orbit of $x$ (and in the complement of an arbitrarily small neighborhood of the orbit of $x$) such that $Dg_0^{\pi(x)}(x)$ is an homothety, where $\pi(x)$ is the period of $x$. Notice that, for $\delta>0$ small enough, we have $|J_{g_0}(x)|<2\varepsilon$.

As $\varepsilon$ can be chosen arbitrarilly small, for $\varepsilon<<\delta$ there is a $\delta$-small perturbation $g$ of $g_0$ coinciding with $g_0$ and $f$ on the orbit of $x$ (and in the complement  of an arbitrarily small neighborhood of the orbit of $x$) such that $Dg^{\pi(x)}(x)$ is the identity map.
For $\delta$ small enough, $g$ belongs to $\cU$ concluding the proof.
\end{proof}

It remains to explain how one can build the open set $\cO$ announced in Proposition~\ref{p.nodomination0}. Our construction (as in \cite{BD}) is based on the coexistence in a single homoclinic class of periodic orbits having complex eigenvalues of any rank. Let us explain this notion.

Let $f$ be a diffeomorphism and $x$ a periodic point of $f$ of period $\pi(x)$. An \emph{eigenvalue}  of $x$ is a (real or complex) eignevalue of the derivative $Df^{\pi(x)}(x)$.  We say that $x$ has a complex eigenvalue of rank $(i,i+1)$, for $i\in\{1,\dots,dim(M)-1\}$, if there is a $Df^{\pi(x)}$-invariant splitting $T_xM=E\oplus F\oplus G$ such that:
\begin{itemize}
\item $dim(E)=i-1$, $dim(F)=2$, $dim(G)=dim(M)-i-1$;
\item the restriction of $Df^{\pi(x)}(x)$ to $F$ has a pair of complex (non-real) conjugated eigenvalues; let $\lambda$ be the modulus of these eigenvalues;
\item the eigenvalues of the restrictions of $Df^{\pi(x)}(x)$ to $E$  (resp. $G$) have modulus strictly less (resp. larger) than $\lambda$.
\end{itemize}

\begin{lemm} Let $K$ be a $f$ invariant set. Assume that $K$ contains a periodic point $x$ having a complex eigenvalue of rank $(i,i+1)$. Assume that $TM=E\oplus F$ is a dominated splitting on $K$. Then $dim E\neq i$.
\end{lemm}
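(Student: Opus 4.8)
The plan is to argue by contradiction. Assume $\dim E = i$, restrict the dominated splitting $TM|_K = E\oplus F$ to the periodic orbit of $x$, and work entirely inside the linear automorphism $A := Df^{\pi(x)}(x)$ of $T_xM$. Since $E$ and $F$ are $Df$-invariant, $E(x)$ and $F(x)$ are $A$-invariant, $T_xM = E(x)\oplus F(x)$, and $\dim E(x) = i$. Order the moduli of the eigenvalues of $A$ (counted with multiplicity) increasingly, $\mu_1\le\cdots\le\mu_d$ with $d=\dim M$. The hypothesis that $x$ has a complex eigenvalue of rank $(i,i+1)$ furnishes an $A$-invariant splitting $T_xM = E_0\oplus F_0\oplus G_0$ with $\dim E_0 = i-1$, $\dim F_0 = 2$, $\dim G_0 = d-i-1$, the eigenvalues on $E_0$ of modulus $<\lambda$, those on $F_0$ a non-real conjugate pair of modulus $\lambda$, and those on $G_0$ of modulus $>\lambda$. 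Counting, exactly $i-1$ eigenvalues of $A$ have modulus $<\lambda$ and exactly $d-i-1$ have modulus $>\lambda$, so $\mu_i = \mu_{i+1} = \lambda$; in particular there is no ``spectral gap'' of $A$ between position $i$ and position $i+1$.

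The second ingredient is the elementary theory of dominated splittings along periodic orbits (\cite[Appendix B.1]{BDV}): such a splitting is spectral. More precisely, iterating the domination inequality along multiples of $\pi(x)$ forces the spectral radius of $A|_{E(x)}$ to be strictly smaller than the smallest modulus of an eigenvalue of $A|_{F(x)}$; since an $A$-invariant subspace is the direct sum of its intersections with the generalized eigenspaces of $A$, each such eigenspace must lie entirely in $E(x)$ or entirely in $F(x)$, and the gap in moduli then forces $E(x)$ to be precisely the sum of the generalized eigenspaces attached to the $\dim E(x)$ smallest moduli. Consequently $\mu_{\dim E(x)} < \mu_{\dim E(x)+1}$. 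Taking $\dim E(x) = i$ contradicts $\mu_i = \mu_{i+1} = \lambda$ established above, so $\dim E\ne i$, which is the assertion.

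Most of this is routine linear algebra; the only point deserving care is the step from the raw domination inequality --- which involves a fixed iterate $N$ that need not be a multiple of $\pi(x)$ --- to the strict inequality between the spectral radius of $A|_{E(x)}$ and the minimal modulus on $A|_{F(x)}$. This is exactly what the ``elementary properties of dominated splittings'' recalled just before the statement provide, so I would simply cite them. The conceptual heart of the lemma is that a non-real conjugate pair of eigenvalues cannot be separated over $\RR$: it occupies two consecutive positions $i,i+1$ in the modulus order with equal moduli, so a dominated sub-bundle, whose rank must fall at a genuine spectral gap, can never have dimension exactly $i$.
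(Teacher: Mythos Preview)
Your argument is correct and follows the same route as the paper's proof. The paper compresses everything into one line---``$E(x)$ and $F(x)$ are $Df^{\pi(x)}(x)$-invariant, and the eigenvalues corresponding to $E(x)$ are strictly less than those corresponding to $F(x)$''---while you spell out the passage from the domination inequality to the spectral gap and make the counting $\mu_i=\mu_{i+1}=\lambda$ explicit; the underlying idea is identical.
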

\begin{proof}Just notice that $E(x)$ and $F(x)$ are $Df^{\pi(x)}(x)$-invariant, and that the eigenvalues corresponding to $E(x)$ are strictly less that those corresponding to $F(x)$.
\end{proof}
 \begin{coro}Let $K$ be an $f$-invariant set containing periodic points $x_i$ having a complex eigenvalue of rank $(i,i+1)$ for every $i\in\{1,\dots, dim M-1\}$.
Then $K$ does not admit a dominated splitting.
\end{coro}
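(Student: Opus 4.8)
The plan is to argue by contradiction, feeding each periodic point $x_i$ into the preceding Lemma and exhausting all the admissible dimensions for one of the bundles of a dominated splitting.

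First I would suppose, contrary to the conclusion, that $K$ admits a dominated splitting $TM|_K=E\oplus F$. By the definition recalled in the excerpt, the decomposition is a direct sum of two \emph{invariant subbundles} of \emph{constant dimension}, and both bundles are non-trivial; hence $\dim E$ is a well-defined integer with $1\le \dim E\le \dim M-1$. Set $j=\dim E\in\{1,\dots,\dim M-1\}$.

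Next I would apply the preceding Lemma with this fixed splitting: since $K$ contains, for each $i\in\{1,\dots,\dim M-1\}$, a periodic point $x_i$ with a complex eigenvalue of rank $(i,i+1)$, the Lemma yields $\dim E\neq i$ for every such $i$. In particular $\dim E\neq j$, contradicting $j=\dim E$. Therefore no dominated splitting over $K$ can exist.

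There is no real obstacle here; the only point to be careful about is the bookkeeping on dimensions — namely that a dominated splitting is by convention a sum of two \emph{non-trivial} subbundles, so that $\dim E$ is forced to lie in $\{1,\dots,\dim M-1\}$, which is exactly the range of indices $i$ ruled out by the Lemma. (Everything is stated over $K$ itself, so the continuity/extension remarks about dominated splittings are not needed for this step.)
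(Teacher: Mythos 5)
Your argument is correct and is exactly the intended deduction (the paper gives no proof, treating the corollary as immediate from the preceding lemma): any dominated splitting $E\oplus F$ has $\dim E\in\{1,\dots,\dim M-1\}$, and each value in that range is excluded by applying the lemma to the corresponding $x_i$. Nothing to add.
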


Hence Proposition~\ref{p.nodomination0} is a direct consequence of:
\begin{prop}\label{p.nodomination}For any compact manifold $M$ with $dim M\geq 3$, there is a non empty open subset $\cO\subset \Diff^1(M)$  and a continous map $f\in \cO\mapsto x_f\in M$ such that, for every $f\in\cO$:
\begin{enumerate}
\item the point $x_f$ is a hyperbolic periodic point of stable index (dimension of the stable manifold) equal to $1$;
\item $J_f(x_f)>0$ and there is a hyperbolic periodic point $y_f$, homoclinically related to $x_f$, such that $J_f(y_f)<0$
\item for every $i\in\{2,\dots,dim M-1\}$, there is a hyperbolic periodic point $x_{i,f}$ homoclinically related to $x_f$, and having a complex eigenvalue of rank $(i,i+1)$.
\item the chain recurrence class $C(x_f,f)$ contains a hyperbolic periodic point $z_f$ of stable index equal to $2$, and having a complex eigenvalue of index $(1,2)$.
\end{enumerate}
\end{prop}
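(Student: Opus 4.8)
The plan is to build $\cO$ as a $C^1$-open set of diffeomorphisms exhibiting a single homoclinic class that simultaneously contains periodic orbits of every ``complex rank'' $(i,i+1)$ for $i\in\{2,\dots,\dim M-1\}$, together with orbits of index $1$ of both possible signs of the Jacobian, and an index-$2$ orbit with a complex eigenvalue of rank $(1,2)$. The construction follows the scheme of Bonatti--D\'\i az \cite{BD}. First I would treat the model case $M=\TT^d$ (or any manifold containing a ball in which we prescribe the dynamics), building an explicit diffeomorphism supported in a ball with a hyperbolic fixed point $x_f$ of stable index $1$ and $J_f(x_f)>0$; then, using a standard ``blender'' or \emph{iterated function system} construction, I would attach finitely many horseshoes connected by heteroclinic intersections so that each prescribed periodic orbit $y_f, x_{2,f},\dots,x_{\dim M-1,f}, z_f$ lies in a basic set homoclinically related to (or, for $z_f$, chain-connected to) the orbit of $x_f$. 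The key technical device is that homoclinic relations and transverse heteroclinic intersections are $C^1$-robust, so all of properties (1)--(4) persist on a $C^1$-neighborhood, and the periodic continuations $x_f, y_f$, etc., depend continuously on $f$, giving the continuous map $f\mapsto x_f$.

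The main steps, in order, are: (i) fix a ball $B\subset M$ and a diffeomorphism $f_0$ equal to the identity outside $B$; inside $B$ place a hyperbolic fixed point $p$ with a one-dimensional stable direction and Jacobian $>1$, and a ``rotational'' horseshoe near $p$ producing, after appropriate linear algebra on the derivative cocycle, a periodic orbit with a complex eigenvalue of rank $(i,i+1)$ — here one uses that in a horseshoe one can freely compose the local derivative with rotations in chosen $2$-planes and homotheties, as in the eigenvalue manipulations of \cite{BDP,BD}. (ii) Arrange these horseshoes $K_2,\dots,K_{\dim M-1}$ and a further index-$2$ horseshoe $K_{z}$ so that $W^u$ and $W^s$ of $p$'s orbit meet each $K_i$ transversally and vice versa, making everything homoclinically related to $p$; for $K_z$, which has a different index, only a chain connection (a cycle of transverse intersections alternating the two indices) is needed, which still forces $z_f\in C(x_f,f)$. (iii) Produce $y_f$: inside the horseshoe related to $p$ one finds periodic orbits with Jacobian arbitrarily close to any prescribed value in an interval around $J_f(p)$ by the transition/averaging argument, so one with $J_f(y_f)<0$ exists because the rotational pieces let the exponents spread past $0$. (iv) Check openness: transverse homoclinic and heteroclinic intersections, hyperbolicity of the horseshoes, and the signs $J_f(x_f)>0$, $J_f(y_f)<0$, the complex-rank conditions, and the index of $z_f$ are all $C^1$-robust; the continuations give the continuous selection $f\mapsto x_f$. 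Finally, transport the model from $B\subset M$ to an arbitrary compact $M$ of dimension $\geq 3$ by a chart, keeping $f$ the identity outside $B$.

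The hard part, I expect, is step (i)--(ii): getting a \emph{single} homoclinic class to carry complex eigenvalues of \emph{all} ranks $2\le i\le \dim M-1$ at once while staying $C^1$-open. One must choose the local linear maps of the several horseshoes so that their unstable/stable subspaces are in sufficiently general position to realize the transverse heteroclinic web, and so that the domination is genuinely destroyed at every index $i$ rather than just at one; this is exactly the delicate bookkeeping of the \cite{BD} construction, combined with the ``transitions'' formalism of \cite{BDP} used to propagate periodic data (Jacobian values, complex blocks) inside the class. The index-$2$ point $z_f$ and the requirement that it sit in $C(x_f,f)$ (rather than in the homoclinic class) is a comparatively mild addition: one builds a heteroclinic cycle between the index-$1$ and index-$2$ horseshoes, which is $C^1$-robust and forces chain recurrence between them without forcing a homoclinic relation.

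Once Proposition~\ref{p.nodomination} is established, Proposition~\ref{p.nodomination0} follows from the two elementary lemmas just stated (a set with periodic points of complex rank $(i,i+1)$ for every $i$ admits no dominated splitting, applied to $C(x_f,f)\supset$ the $x_{i,f}$ and $z_f$), and then Theorem~\ref{t.BD}, Corollary~\ref{c.wild} and Theorem~\ref{t.wild} follow as already explained in the excerpt.
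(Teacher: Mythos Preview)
Your overall architecture matches the paper's, and you correctly name all the ingredients (horseshoes carrying prescribed linear data, transverse heteroclinic webs, blenders, robustness of hyperbolic continuations). But you have inverted the difficulty, and in doing so left a genuine gap.

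You call items (1)--(3) the ``hard part'' and item (4) ``a comparatively mild addition''. The paper says exactly the opposite: since $x_f,y_f,x_{2,f},\dots,x_{\dim M-1,f}$ all have stable index $1$, transverse homoclinic relations among them are automatically $C^1$-open, and prescribing complex eigenvalues of rank $(i,i+1)$ on finitely many periodic orbits of a horseshoe is a straightforward local construction. Item (4) is the delicate one. Your sentence ``one builds a heteroclinic cycle between the index-$1$ and index-$2$ horseshoes, which is $C^1$-robust'' is false as stated: if $x$ has index $1$ and $z$ has index $2$, then $\dim W^u(z)+\dim W^s(x)=(\dim M-2)+1=\dim M-1<\dim M$, so the intersection $W^u(z)\cap W^s(x)$ cannot be transverse and a plain heteroclinic cycle is \emph{not} $C^1$-robust. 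Without a mechanism to protect this non-transverse connection, your open set $\cO$ collapses.

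This is precisely what the blender is for, and the paper makes that explicit. The blender $\Lambda$ (of index $1$, with a further dominated splitting $E^s\oplus E^u\oplus E^{uu}$) has a characteristic region and a strong-unstable cone field such that \emph{any} $(\dim M-2)$-disk tangent to $C^{uu}$ and crossing the region meets $W^s(\Lambda_g)$, for every $g$ in a $C^1$-neighborhood. One then arranges $W^u(z)$ to cross the characteristic region tangent to $C^{uu}$; this, together with the (genuinely transverse, hence robust) intersection $W^s(z)\cap W^u(x)$, forces $z_g\in C(x_g,g)$ robustly. In your write-up the word ``blender'' appears only as a loose synonym for ``horseshoe/IFS'', not as the device that makes the heterodimensional connection open; that is the missing idea you need to supply.
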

\begin{proof} The properties described in items (1)(2)(3) are open properties, and are easy to get by a local argument: we just describe how to obtain a finite number of periodic saddles of index $1$ that are homoclinicaly related.

The property of item (4) comes from the notion of blender defined in \cite{BD}. A blender $\La$ is:
\begin{itemize}
\item a (uniformly) hyperbolic compact set $\La$ that is also partially hyperbolic: there is an invariant dominated splitting $E^s\oplus E^u\oplus E^{uu}$ over $\La$ such that the dimension of $E^u$ is equal to $1$; here we assume that the dimension of the stable bundle $E^s$ is $1$, so that $dim (E^{uu})=dim M-2$. The partially hyperbolic structure extends to a neighborhood $U$ of $\La$; and
\item an open region $V\subset U $ (called \emph{the caracteristic region of the blender}), endowed with a cone field $C^{uu}$ around a the bundle $E^{uu}$;
\end{itemize}
with the following property. There exists a $C^1$-neighborhood $\cU$
of $f$ such that, for every $g\in \cU$ and any ball $D^{uu}\subset
U$ of dimension  $dim(D^{uu})=dim(E^{uu})$, tangent to the conefield
$C^{uu}$, and crossing the region $V$, then $D^{uu}$ meets the
stable manifold of the continuation $\La_g$ of $\La$ for $g$. (See
\cite[Section6.2]{BDV} for a more detailed discussion of the notion
of blender, and references.)

Let $f$ be a diffeomorphism having
\begin{itemize}
\item a blender $\La$ containing a periodic point $x$ of index $1$ and such that $J_f(x)>0$,
\item a point $z_f$ of index $2$ such that:
\begin{itemize}
\item  the unstable manifold $W^u(z)$ crosses the caracteristic region of the blender, remaining tangent to the strong unstable cone field $C^{uu}$,
\item the stable manifold  $W^s(z)$ intersects transversely  $W^u(x)$,
\item the stable eigenvalue of $z$ is not real (hence $z$ has a complex eigenvalue of rank $(1,2)$),
\end{itemize}
\item for any $i\in \{2,...,dim M-1\}$, a hyperbolic periodic point $x_i$ of index $1$ homoclinicaly related to $x$ and having a complex eigenvalue of rank $(i,i+1)$,
\item a hyperbolic periodic point $y$ homoclinicaly related to $x$ and such that $J_f(y)<0$.
\end{itemize}

All these properties are robust. Hence  there is a small neighborhood $\cO$ of $f$ such that the continuations of the periodic points $x,y,z, x_i$ and of the blender $\La$ are well-defined for every $g\in\cO$ and satisfy all the properties above. We conclude by noting that, for every $g\in \cO$, the point $z_g$ belongs to the chain recurrence class of $x_g$.
\end{proof}

\section{Huge centralizers of symplectomorphisms}

The aim of this section is to prove that the set of symplectomorphisms having a large centralizer
is dense in $\cO\subset \Symp^1_\omega(M)$,
the open subset consisting of symplectomorphisms having a robustly totally elliptic periodic point.
The argument is analogous to the idea of Theorem~\ref{t.wild}.

\begin{prop}\label{p.symp}There is a dense part $\cD\subset \cO$ such that, for $f\in \cD$
there is a ball $D\subset M$ and a integer $n>0$ such that  $D\cap f^i(D)=\emptyset$ for $i\in\{1,\dots,n-1\}$,
and the restriction of $f^n$ to $D$ is the identity map.
\end{prop}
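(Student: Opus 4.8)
The plan is to imitate the proof of Corollary~\ref{c.wild}, with the robustly totally elliptic periodic point playing the role of the periodic point with derivative tangent to the identity produced there by wild dynamics: we use it as a ``seed'' near which a periodic island can be created by a $C^1$-small $\omega$-preserving perturbation. Fix $f\in\cO$ and a $C^1$-neighborhood $\cU\subset\Symp^1_\omega(M)$ of $f$; we produce $g\in\cU\cap\cO$ possessing the announced periodic ball. Let $x$ be a robustly totally elliptic periodic point of $f$ of period $n$, with eigenvalue arguments $\pm\alpha_1,\dots,\pm\alpha_d$ and $0<\alpha_1<\cdots<\alpha_d<\pi$. Since the $\alpha_j$ are pairwise distinct, $Df^n(x)$ is symplectically conjugate to a product of rotations; choosing symplectic local coordinates on a neighborhood $B_0$ of $x$ accordingly (this is a choice of chart, not a perturbation), we may assume $Df^n(x)=R_{\beta_1}\oplus\cdots\oplus R_{\beta_d}$ with $|\beta_j|=\alpha_j$. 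First I would fix a large prime $q$ and integers $p_1,\dots,p_d\in\{1,\dots,q-1\}$ with $p_j/q$ so close to $\beta_j/2\pi$ that $\hat A:=R_{2\pi p_1/q}\oplus\cdots\oplus R_{2\pi p_d/q}$ is $C^1$-close to $Df^n(x)$; since the $\alpha_j$ are distinct and lie in $(0,\pi)$, one can moreover arrange that the residues $\pm p_1,\dots,\pm p_d$ are pairwise distinct modulo $q$, so that $\hat A$ has simple eigenvalues, all on the unit circle, and has exact order $q$.

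\textbf{Linearizing $f^n$ near $x$.} The heart of the argument is to perturb $f$ so that $f^n$ becomes \emph{exactly} $\hat A$ on a ball around $x$. Because $Df^n(x)=\bigoplus R_{\beta_j}$, on a sufficiently small round ball $B_0$ centered at $x$ the map $f^n$ is $C^1$-close to $\bigoplus R_{\beta_j}$; hence $\zeta_0:=\hat A\circ(f^n)^{-1}$, defined on the neighborhood $f^n(B_0)$ of $x$ and fixing $x$, has linear part $\bigoplus R_{2\pi p_j/q-\beta_j}$ at $x$ and so is $C^1$-close to the identity there (both $B_0$ small and $q$ large contribute to this). I would write $\zeta_0$ as the time-one map of a Hamiltonian isotopy generated by a $C^2$-small time-dependent Hamiltonian $H_t$ with $dH_t(x)=0$, multiply $H_t$ by a bump function equal to $1$ near $x$ and supported in $B_0$, and let $\zeta$ be the time-one map of the resulting cut-off Hamiltonian. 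Then $\zeta\in\Symp^1_\omega(M)$ is $C^1$-small, supported in $B_0$, and coincides with $\zeta_0$ on a smaller round ball $D$ centered at $x$ (chosen small enough that $f^n(D)$ lies in the region where $\zeta=\zeta_0$). Set $g:=\zeta\circ f$: symplectic, $C^1$-close to $f$ (hence in $\cU$ for suitable parameters), and equal to $f$ outside $B_0$. Using that $f^k(B_0)\cap B_0=\emptyset$ for $1\le k\le n-1$ (true for $B_0$ small since $x,f(x),\dots,f^{n-1}(x)$ are distinct; the case $n=1$ is immediate), one checks that for $z\in D$ one has $g^k(z)=f^k(z)$ for $k<n$ and $g^n(z)=\zeta(f^n(z))=\zeta_0(f^n(z))=\hat A(z)$, i.e. $g^n|_D=\hat A|_D$. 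In particular $g^n(D)=\hat A(D)=D$ and $Dg^n(x)=\hat A$, so $x$ is still a robustly totally elliptic periodic point of $g$ and $g\in\cO$.

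\textbf{Extracting the island.} Since $g^n|_D=\hat A|_D$ and $\hat A(D)=D$, induction gives $g^{kn}|_D=\hat A^k|_D$ for all $k$, so $g^{nq}|_D=\hat A^q|_D=\mathrm{Id}$. As $q$ is prime and $0<p_j<q$, for each $0<k<q$ the matrix $\hat A^k$ has no eigenvalue $1$, hence fixes only $x$; therefore any $y\in D\setminus\{x\}$ has $q$ pairwise distinct images $y,\hat A(y),\dots,\hat A^{q-1}(y)$, and I would pick a small round ball $D_y\subset D$ around $y$ with $\hat A^k(D_y)\cap D_y=\emptyset$ for $0<k<q$. Put $N:=nq$. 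Then $D_y,g(D_y),\dots,g^{N-1}(D_y)$ are pairwise disjoint: for $0<i<N$ write $i=kn+r$ with $0\le r<n$ and $0\le k<q$; if $r\neq 0$ then $g^i(D_y)=f^r(\hat A^k(D_y))$ is a small set near $f^r(x)\neq x$, disjoint from $D_y$, while if $r=0$ then $g^i(D_y)=\hat A^k(D_y)$ is disjoint from $D_y$ by the choice of $D_y$. Since moreover $g^N|_{D_y}=\mathrm{Id}$, the symplectomorphism $g$ lies in $\cU\cap\cO$ and has the required periodic ball (with $N$ in the role of the integer in the statement). As $f$ and $\cU$ were arbitrary, the set $\cD$ of symplectomorphisms in $\cO$ satisfying the conclusion is dense in $\cO$.

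\textbf{Main obstacle.} I expect the delicate point to be the second step: the $C^1$-small, $\omega$-preserving linearization making $f^n$ literally equal to the finite-order map $\hat A$ on a whole ball around $x$, rather than merely at $x$. The mechanism (representing a symplectomorphism $C^1$-close to the identity near a fixed point as the time-one map of the flow of a $C^2$-small Hamiltonian, then cutting the Hamiltonian off by a bump function — a local instance of symplectic fragmentation) is standard, but some care is needed so that the cut-off keeps the perturbation $C^1$-small: in $d(\chi H)=\chi\,dH+H\,d\chi$, the term $H\,d\chi$ is controlled only because $H$ itself is small, even though $d\chi$ is not. This is precisely what forces us to work in the $C^1$ topology, where — unlike in $C^\infty$, where Birkhoff normal forms obstruct it — such a linearization is available.
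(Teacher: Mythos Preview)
Your argument is correct and follows essentially the same route as the paper: perturb near the totally elliptic point so that the return map becomes a linear map with rational rotation angles, then take a suitable power. The paper packages your ``linearizing'' step into two black-box lemmas (a symplectic Franks lemma to make $Df^n(x)$ have rational angles, and a linearizing-perturbation lemma to make $g^n$ literally linear in a symplectic chart, both obtained via generating functions), whereas you carry out the combined perturbation by hand via a Hamiltonian cutoff; your choice of a single prime $q$ in place of the paper's $\mathrm{lcm}(q_1,\dots,q_d)$ and your explicit extraction of the wandering sub-ball $D_y$ are in fact more detailed than the paper's one-line ``it follows that $g$ admits a periodic ball.''

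One small caveat worth recording: your Hamiltonian-isotopy construction tacitly uses that $\zeta_0$ is at least $C^2$ (so that a $C^2$ Hamiltonian generating it exists and your cutoff estimate on $D^2(\chi H)$ makes sense), while a priori $f\in\Symp^1_\omega(M)$ gives only a $C^1$ map $\zeta_0$. This is harmless---either first $C^1$-approximate $f$ by a $C^\infty$ symplectomorphism in $\cO$, or replace the Hamiltonian argument by the generating-function argument the paper alludes to---but it is exactly why the paper phrases the two lemmas abstractly rather than via Hamiltonian flows.
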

This proposition is a consequence of the two following classical lemmas, that can be easily obtained
by considering generating functions.

\begin{lemm}[Symplectic Franks Lemma]\label{l.franks} Given a symplectomorphism $f$ and a neighborhood $\cU\subset \Symp^1_\omega(M)$ of $f$, there exists $\varepsilon>0$ with the following property. For every   point $x\in M$, every  neighborhood $V$ of  $x$, every symplectic linear isomorphism $A\colon T_xM\to T_{f(x)}M$ with $\|A-Df(x)\|<\varepsilon$, there exists $g\in \cU$ such that:
\begin{itemize}
\item $g(x)=f(x)$ and $g$ coincides with $f$ in the complement of $V$;
\item $Dg(x)=A$.
\end{itemize}
\end{lemm}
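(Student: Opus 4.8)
My plan is to deduce this from a purely local, linear construction: after pulling $f$ back into Darboux charts, the prescribed change of derivative at $x$ can be realized by a symplectomorphism supported in an arbitrarily small ball around $x$, obtained from a quadratic generating function (equivalently a quadratic Hamiltonian) multiplied by a bump function. Concretely, since $M$ is compact it is covered by finitely many Darboux charts of uniformly bounded geometry. Given $x$, I would pass to Darboux coordinates identifying neighbourhoods of $x$ and of $f(x)$ with neighbourhoods of $0$ in the standard $(\RR^{2d},\omega_0)$; there $f$ becomes a local symplectomorphism $\tilde f$ with $\tilde f(0)=0$, and $Df(x)$, $A$ become symplectic matrices $D\tilde f(0)$, $\tilde A$ with $\|\tilde A-D\tilde f(0)\|\le C\|A-Df(x)\|$, the constant $C$ being uniform over $M$ by compactness. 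Setting $L=D\tilde f(0)^{-1}\tilde A$, a symplectic matrix for $\omega_0$ with $\|L-Id\|\le C'\varepsilon$, it then suffices to produce a symplectomorphism $\tilde\Psi$ of $(\RR^{2d},\omega_0)$ with $\tilde\Psi(0)=0$, $D\tilde\Psi(0)=L$, supported in a prescribed small ball $B_r$ around $0$ lying inside the chart image of $V$, and with $\|\tilde\Psi-Id\|_{C^1}\le C''\varepsilon$. Indeed, transporting $\tilde\Psi$ back to $M$ and extending it by the identity gives $\Psi$, and $g:=f\circ\Psi$ equals $f$ off $V$, satisfies $g(x)=f(x)$, and has $Dg(x)=Df(x)\,D\Psi(x)=A$; the constants being uniform, choosing $\varepsilon$ small in terms of $f$ and $\cU$ alone forces $g\in\cU$.

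For the local model I would use that the exponential map of $Sp(2d,\RR)$ is a local diffeomorphism at $Id$: for $\varepsilon$ small one may write $L=\exp(J\cA)$ with $\cA$ symmetric and $\|\cA\|\le C\|L-Id\|$, so $L$ is the time-one map of the Hamiltonian flow of the quadratic Hamiltonian $H_0(z)=\frac12\langle z,\cA z\rangle$ (alternatively, $L$ has a type-two generating function $\langle q,P\rangle+F(q,P)$ with $F$ quadratic and $C^2$-small). I would then pick a bump function $\chi$ on $\RR^{2d}$ with $\chi\equiv1$ near $0$ and $\operatorname{supp}\chi\subset B_r$, and let $\tilde\Psi$ be the time-one map of $\chi H_0$ (resp.\ the symplectomorphism generated by $\langle q,P\rangle+\chi F$). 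Then $\operatorname{supp}(\tilde\Psi-Id)\subset B_r$, since $\chi H_0$ vanishes identically off $B_r$; $\tilde\Psi(0)=0$, since $0$ is a critical point of $\chi H_0$; and $\tilde\Psi$ agrees with the linear map $L$ on a neighbourhood of $0$ (where $\chi\equiv1$ and the $L$-orbits remain), so $D\tilde\Psi(0)=L$.

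The one genuine difficulty — the main obstacle — is the uniform bound $\|\tilde\Psi-Id\|_{C^1}\le C''\varepsilon$ \emph{independently of the cut-off radius $r$}: a naive cut-off at scale $r$ costs a factor $1/r$ in $C^1$, which would wreck the conclusion. What rescues the argument is that $H_0$ vanishes to order two at $0$: on $B_r$ one has $|H_0|\le C\|\cA\|r^2$ and $|DH_0|\le C\|\cA\|r$, so in
\[ D^2(\chi H_0)=\chi\,D^2H_0+2\,D\chi\otimes DH_0+H_0\,D^2\chi \]
each term is $O(\|\cA\|)$, using $\|D\chi\|=O(1/r)$ and $\|D^2\chi\|=O(1/r^2)$. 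Hence the Hamiltonian vector field of $\chi H_0$ is $C^1$-small of size $O(\|\cA\|)=O(\varepsilon)$, and by a Gronwall estimate so is $\tilde\Psi-Id$; in the generating-function formulation one instead notes $\|\chi F\|_{C^2}\le C\|F\|_{C^2}$ by the same computation and uses the $C^1$-continuous dependence of a near-identity symplectomorphism on its generating function. This uniform estimate, together with compactness of $M$ (which makes the chart distortion constants uniform), gives a single $\varepsilon$ depending only on $f$ and $\cU$, as the statement demands.
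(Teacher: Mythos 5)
Your proof is correct, and it follows the same route the paper gestures at: the authors do not prove this lemma, remarking only that it ``can be easily obtained by considering generating functions,'' and your argument supplies exactly the details this remark leaves implicit (your primary version uses the cut-off Hamiltonian $\chi H_0$, but you yourself note this is equivalent to cutting off the quadratic generating function). You also correctly isolate and resolve the one genuine technical point that makes the symplectic Franks lemma nontrivial, namely that the $C^1$-size of the perturbation must be bounded uniformly in the cut-off radius $r$; the order-two vanishing of $H_0$ at the origin, which makes each term of $D^2(\chi H_0)$ scale like $\|\cA\|$ rather than $\|\cA\|/r^2$, is precisely what saves the estimate.
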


\begin{lemm}[Linearizing perturbation]\label{l.symp}
Given a symplectomorphism $f$, a neighborhood $\cU\subset \Symp^1_\omega(M)$ of $f$,
a periodic point $x$ of $f$ of period $n$ and a neighborhood $V$ of the orbit of $x$,
there exists $g\in \cU$ such that:
\begin{itemize}
\item $g$ coincides with $f$ on the orbit of $x$ and in the complement of $V$; in particular $x$ is periodic for $g$;
\item there is a neighborhood $V_0$ of $x$ and   a chart $\psi\colon V_0\to \RR^{2d}$,  such that \begin{itemize}
\item $\psi(x)=0\in\RR^{2d}$,
\item $\psi_*(\omega)$ is the canonical symplectic form on $\RR^{2d}$,
\item the expression of $g^n$ in this chart (i.e. the local symplectomorphism
$\psi\circ g^n\circ \psi^{-1}$) is the symplectic linear map $D\psi(x)\circ Df^n(x)\circ D\psi(x)^{-1}$.
\end{itemize}
\end{itemize}
\end{lemm}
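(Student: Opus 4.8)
The plan is to prove the Symplectic Linearizing Perturbation Lemma (Lemma~\ref{l.symp}) by reducing to the local picture at the periodic orbit and then modifying $f^n$ in a Darboux chart using a generating-function description of symplectic maps. First I would use Darboux's theorem to fix, for each point $x_i=f^i(x)$ of the orbit (with $x_0=x_1$ identified appropriately, $i\in\{0,\dots,n-1\}$), a symplectic chart $\psi_i\colon W_i\to\RR^{2d}$ sending $x_i$ to $0$ and $\psi_i{}_*\omega$ to the standard form $\omega_0$, with the $W_i$ small and pairwise disjoint and contained in $V$. In these charts the diffeomorphism $f$ becomes a sequence of local symplectomorphisms $F_i=\psi_{i+1}\circ f\circ\psi_i^{-1}$ fixing the origin with linear part $A_i=D\psi_{i+1}(x_{i+1})\circ Df(x_i)\circ D\psi_i(x)^{-1}$. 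The composite $F_{n-1}\circ\dots\circ F_0$ represents $f^n$ near $x$ and has linear part $L=A_{n-1}\cdots A_0$, which is exactly the matrix appearing in the statement once one identifies $T_xM\cong\RR^{2d}$ via $D\psi(x)$ (with $\psi=\psi_0$).

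The heart of the argument is then a purely local statement: given a local symplectomorphism $F$ of $(\RR^{2d},\omega_0)$ fixing $0$ with $DF(0)=L$, and any neighbourhood of the identity, there is a local symplectomorphism $\tilde F$ that agrees with $F$ outside a prescribed small ball around $0$, equals the linear map $L$ on a smaller neighbourhood of $0$, and is $C^1$-close to $F$. The standard way to see this is via generating functions: after a linear symplectic change of coordinates one may assume $L$ is in a normal form for which $F$ (being $C^1$-close to $L$ near $0$) admits a generating function $S(q,Q)=\tfrac12\langle \text{(quadratic part)}\rangle + R(q,Q)$, where $R$ and its first derivatives vanish to second order at the origin; one then damps $R$ by a cutoff, replacing $R$ by $\chi(\,\cdot\,/\rho)\,R$ with $\chi$ supported in the unit ball and equal to $1$ near $0$. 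Because $R$ is $O(|(q,Q)|^2)$ with first derivative $O(|(q,Q)|)$, the $C^1$-size of the correction to $S$ — and hence of the correction to the induced symplectomorphism — is $O(\rho)$, so it can be made as small as we like by shrinking $\rho$. The resulting map is symplectic by construction (it has a generating function), equals $L$ where $\chi\equiv1$, and equals $F$ where $\chi\equiv0$. I would carry this out at the single point $x$: replace $F_0$ by $\tilde F_0$ chosen so that $\tilde F_{n-1}\circ\dots\circ\tilde F_1\circ\tilde F_0 = L$ on a neighbourhood $\psi_0(V_0)$ of $0$, where only $F_0$ is modified (one can instead distribute the modification, but modifying one factor is cleanest, using that the other $F_i$ are diffeomorphisms between the disjoint charts so the composite with a linearized $F_0$ is again the right germ after a further bounded linear adjustment).

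Transporting back, one sets $g=f$ outside $V$ and, on $W_0$, $g=\psi_1^{-1}\circ\tilde F_0\circ\psi_0$, extending $g$ to the rest of the orbit's charts by the unchanged $\psi_{i+1}^{-1}\circ F_i\circ\psi_i$; since $\tilde F_0=F_0$ near the boundary of $W_0$, this glues to a global symplectomorphism $g$ of $M$, which is $C^1$-close to $f$ because the local correction was $C^1$-small and supported in $V$. By construction $g$ coincides with $f$ on the orbit of $x$ (the origin is fixed by $\tilde F_0$ and the linear parts are unchanged where we did not cut off, but in fact we only need $g(x_i)=x_{i+1}$, which holds since $\tilde F_0(0)=0$) and in the complement of $V$; and in the chart $\psi=\psi_0$ restricted to $V_0$ the map $\psi\circ g^n\circ\psi^{-1}$ equals $L=D\psi(x)\circ Df^n(x)\circ D\psi(x)^{-1}$, which is precisely the asserted conclusion. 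The main obstacle I anticipate is the bookkeeping in the generating-function step: one must choose coordinates so that the relevant mixed-variable generating function exists (this requires the appropriate block of $DF(0)$ to be invertible, which can always be arranged by composing with an explicit linear symplectomorphism, as in the classical proof of the Franks lemma), and then verify carefully that the cutoff estimate indeed controls the $C^1$ norm of the resulting symplectomorphism uniformly as $\rho\to0$ — this is routine but is where all the analysis sits.
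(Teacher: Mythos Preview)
The paper does not prove this lemma; it simply remarks that it and the symplectic Franks lemma ``can be easily obtained by considering generating functions.'' Your Darboux-charts-plus-generating-function-cutoff approach is therefore exactly the method the authors have in mind, and the core local interpolation idea is correct.

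There is, however, a slip in how you pass from the local lemma to the global conclusion. Your local lemma takes a germ $F$ with $DF(0)=A$ and produces $\tilde F$ equal to the linear map $A$ near $0$ and to $F$ further out. You then propose to modify only $F_0$ so that $F_{n-1}\circ\cdots\circ F_1\circ\tilde F_0=L$ near $0$. But if $\tilde F_0=A_0$ near $0$, the composite still carries the nonlinearities of $F_1,\dots,F_{n-1}$ and is not $L$; no ``bounded linear adjustment'' repairs this. The clean route is the one you set aside: apply your local linearization to \emph{each} $F_i$, obtaining $\tilde F_i=A_i$ near $0$, so that the composite is $A_{n-1}\cdots A_0=L$ on a neighbourhood of $0$; this modifies $f$ in small disjoint balls around every point of the orbit, all contained in $V$, which is exactly what the statement allows. (Alternatively, keep the support at $x_0$ alone but interpolate $F_0$ toward the nonlinear germ $(F_{n-1}\circ\cdots\circ F_1)^{-1}\circ L$, which shares the $1$-jet of $F_0$ at $0$ and is therefore amenable to the same cutoff estimate.) Two smaller bookkeeping points: with your convention $\chi\equiv 1$ near $0$, the modified generating function should be $S_0+(1-\chi(\cdot/\rho))R$ rather than $S_0+\chi(\cdot/\rho)R$, otherwise you get $F$ near $0$ and $L$ outside; and it is the $C^2$-smallness of the correction to $S$ (which follows from $D^2R(0)=0$, i.e.\ from $DF(0)=L$) that gives $C^1$-smallness of the perturbed symplectomorphism, not the $C^1$-smallness of the correction to $S$.
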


\noindent
\begin{proof}[Proof of Proposition~\ref{p.symp}]
According to Lemmas~\ref{l.symp} and \ref{l.franks} for every $f\in \cO$, any neighborhood $\cU$ of $f$ and any robustly totally elliptic point $x$ of $f$, there exists $g$ in $\cU$ agreeing with $f$ on the orbit of $x$,
and such that:
\begin{itemize}
\item  $Dg^n(x)$ (where $n$ is the period of $x$) is a totally elliptic matrix whose eigenvalues $\lambda_i$ have the form $e^{\alpha_i 2i\pi}$ with $\alpha_i=\frac{p_i}{q_i}$, $p_i,q_i\in\ZZ$, $p_i\wedge q_i=1$;
\item there is a neighborhood $V$ of $x$ and  a symplectic chart $\psi\colon V\to\RR^{2d}$ such that  $\psi(x)=0\in\RR^{2d}$ and the expression of $g^n$  in this chart is $Dg^n(x)$.
\end{itemize}

Let $m$ be the smallest common multiple of the integers $q_i$.
Then $g^{nm}$ is the identity map in a neighborhood of $x$.
It follows that  $g$ admits a periodic ball such that the return map is the identity map, concluding the proof.
\end{proof}

\noindent
\begin{proof}[End of the proof of Theorem~\ref{t.symp}]
Now the proof of Theorem~\ref{t.symp} is identical to the proof of Theorem~\ref{t.wild}: consider a diffeomorphisms $f\in\cD$ and a periodic ball $D$ on which the first return map is the identity map. Shrinking  $D$ if necessary,
we can assume that there is a symplectic chart $\psi\colon D\to \RR^{2d}$
inducing a symplectomorphism from $D$ to the standard ball $\DD^{2d}\subset\RR^{2d}$.

Let $\varphi$ be  a symplectic diffeomorphism of $\RR^{2d}$ that is the identity map on the complement of $\DD^{2d}$.
Let $g_0$ be the diffeomorphism of $M$ that is the identity map on the complement of $D$ and that is $\psi^{-1}\varphi\psi$ on $D$, and let $g_n=f^ng_0 f^{-n}$. Then the diffeomophism $g_\varphi$ coinciding with the identity map in the complement of the $f$-orbit of $D$ and with $g_i$ on $f^i(D)$ is a symplectomorphism of $M$ commuting with $f$.
\end{proof}

\vspace{10pt}

\noindent \textbf{Christian Bonatti (bonatti@u-bourgogne.fr)}\\
\noindent \textbf{Gioia M. Vago (vago@u-bourgogne.fr)}\\
\noindent  CNRS - Institut de Math\'ematiques de Bourgogne, UMR 5584\\
\noindent  Universit\'e de Bourgogne - BP 47 870\\
\noindent  21078 Dijon Cedex, France\\
\vspace{10pt}

\noindent \textbf{Sylvain Crovisier (crovisie@math.univ-paris13.fr)}\\
\noindent CNRS - Laboratoire Analyse, G\'eom\'etrie et Applications, UMR 7539,\\
\noindent Institut Galil\'ee, Universit\'e Paris 13, Avenue J.-B. Cl\'ement,\\
\noindent 93430 Villetaneuse, France\\
\vspace{10pt}

\noindent \textbf{Amie Wilkinson (wilkinso@math.northwestern.edu)}\\
\noindent Department of Mathematics, Northwestern University\\
\noindent 2033 Sheridan Road \\
\noindent Evanston, IL 60208-2730,  USA

\end{document}